\documentclass[10pt,reqno]{amsart}
\usepackage[cp1251]{inputenc}
\usepackage[english]{babel}
\usepackage{amsmath}
\usepackage{amssymb}
\usepackage{amsfonts}
\usepackage{graphicx}
\usepackage{eucal}
\usepackage{hyperref,amsthm}
\usepackage{xcolor}
\RequirePackage{bbold}
\usepackage{mathptmx,bm}
\usepackage{bbm}

\newtheorem{theorem}{Theorem}
\newtheorem{lemma}{Lemma}

\newtheorem{corollary}{Corollary}
\theoremstyle{definition}

\newtheorem{remark}{Remark}
\numberwithin{equation}{section}
\usepackage{tikz}
\usepackage{pgfplots}

\begin{document}

\title[Second--order renewal asymptotics in the finite--variance regularly varying regime]
    {Second--order renewal asymptotics in the finite--variance regularly varying regime}

\maketitle
\begin{center}
{\large \textbf{{Azam~A.~Imomov}$^{1,*}$ and {Shakhzod~E.~Rizaqulov}$^{2}$}}\\

\vspace{0.5em}

{\small
$^{1}$ Karshi State University, 17, Kuchabag st., 180100 Karshi city, Uzbekistan \\
$^{2}$ Tashkent University of Information Technologies, 100000 Tashkent, Uzbekistan \\
}

\vspace{0.5em}

{\small *Corresponding author: {imomov\_azam@mail.ru}}

\vspace{1em}

\emph{Dedicated to the Fond Memory of our Great Ancestry}
\end{center}

\begin{abstract}
    We develop a refined asymptotic theory for renewal processes with regularly varying inter-arrival
    distributions in the finite-variance regime. Assuming that $1-F(t)=t^{-\alpha}L(t)$ with $\alpha\in(2,3)$
    and $L$ slowly varying, we establish second-order asymptotic expansions for both the renewal convolution
    and the renewal function. The results reveal a two-scale structure: the equilibrium tail $Q_F(t)$ governs
    the local renewal correction, while the integrated tail $\rho(t)=\int_t^\infty Q_F(u)\,du$ determines
    the global deviation from equilibrium. A particular emphasis is placed on the critical threshold $\alpha=2$,
    where the asymptotic behaviour undergoes a structural transition. In this borderline case, the power-law
    hierarchy collapses and the dominant correction is governed by an integrated slowly varying tail. Nevertheless,
    the local renewal correction mechanism persists, and the correction term remains asymptotically proportional
    to the equilibrium tail. The analysis is based on a systematic use of Laplace-transform techniques and
    Tauberian transfer principles, which allow us to relate singular behaviour in the Laplace domain to precise
    time-domain asymptotics. The results provide a unified description of second-order renewal asymptotics
    across the entire range $2\leq\alpha<3$, and point to possible extensions to renewal equations
    and to age-dependent branching processes.

\vspace{0.5em}

\noindent\emph{\bf Keywords}: {Renewal theory; Regularly varying tails; Equilibrium distribution;
    Heavy-tailed distributions; Slowly varying functions; Stone constant; Second-order asymptotics; Tauberian theorems; Laplace transforms; Critical regime; Renewal density; Remainder estimates.}

\vspace{0.5em}

\noindent\textbf{MSC (2020):} {Primary 60K05; Secondary 44A10, 60G70}
\end{abstract}

\maketitle

\section{Introduction: Background, Motivation and Objective}        \label{IRSec1}

    Renewal theory provides a fundamental framework for describing the long-term behavior of systems
    evolving through successive random events. Let ${\mathbb{N}}_0:=\{0\}\cup{\mathbb{N}}$ and
    $\mathbb{R}_+:=[0,\infty)$. The renewal measure $\mathbb{U}$ associated with a measurable set
    $\mathcal{J}\subset\mathbb{R}_+$ is defined as the expected number of renewal epochs falling within
    that set. Given a probability measure $\mathbb{F}$ representing the common distribution of inter-arrival
    times $\tau, \tau_1, \tau_2, \ldots\,$, the renewal measure is the sum of the $n$-fold convolutions of
    $\mathbb{F}$ with itself:
\[
    \mathbb{U}\left(\mathcal{J}\right)=\sum_{n\in{\mathbb{N}_0}}{\mathbb{F}^{\ast{n}}\left(\mathcal{J}\right)},
\]
    where ${\mathbb{F}^{\ast{n}}}$ denotes the $n$-fold convolution of $\mathbb{F}$ and
    ${\mathbb{F}^{\ast{0}}}=\delta_0$ is the Dirac measure (unit mass) at the origin, representing
    the initial event at time zero. The measure $\mathbb{U}$ satisfies the renewal equation
    $\mathbb{U}=\delta_0+\mathbb{U}\ast\mathbb{F}$, which provides the measure-theoretic formulation
    of the renewal identity. For the special choice $\mathcal{J}=[0,t]$, the renewal measure reduces
    to the renewal function $U(t):=\mathbb{U}\left([0,t]\right)$, representing the expected number
    of renewals occurring in the time interval $[0,t]$. Accordingly, $F(t):=\mathbb{F}\left(\tau\in[0,t]\right)$
    denotes the common distribution function of the inter-arrival times.

    The renewal function $U(t)$ plays a central role in renewal theory. In most theoretical and applied
    contexts, qualitative or quantitative information about $U(t)$, and in particular its asymptotic behaviour
    as $t\to\infty$, is sufficient to resolve the principal questions concerning the long-term dynamics of
    the underlying system. The oldest and most fundamental result in this direction is the elementary renewal
    theorem, established in full generality by Feller~\cite{Feller41}, which asserts that
\begin{equation}     \label{AISR1.1}
    \frac{U(t)}{t}\longrightarrow\frac{1}{\,\mu\,}
    \qquad \text{as} \quad t\to\infty,
\end{equation}
    where $\mu=\mathbb{E}\tau\in(0,\infty]$, with the convention that the limit is zero when $\mu=\infty$.

    Historically, the first rigorous proofs of the renewal limit theorem relied on Tauberian arguments for
    Laplace transforms, rooted in the foundational works of Ikehara~\cite{Ikehara31} and Wiener~\cite{Wiener32}.
    A decisive refinement was later obtained by Blackwell~\cite{Blackwell48}, whose celebrated theorem describes
    the asymptotic behaviour of renewal increments in the non-lattice case. These developments were subsequently
    unified and significantly extended by Smith~\cite{Smith54,Smith58} through his formulation of the key
    renewal theorem:
\[
    (g\ast U)(t)\longrightarrow\frac{1}{\,\mu\,} \int_{\mathbb{R}_+}g(x)\,dx
    \qquad \text{as} \quad t\to\infty,
\]
    whenever $g(\cdot)$ is directly Riemann integrable. In particular, the elementary renewal
    theorem~\eqref{AISR1.1} may be viewed as a special case of this result, corresponding, at a heuristic
    level, to the choice $g\equiv{1}$. This observation highlights the fact that \eqref{AISR1.1} represents
    the simplest instance of a much broader convolutional asymptotic principle. Further classical developments
    and a detailed exposition of renewal asymptotics are available in the monographs {\cite[Ch.~X]{Borovkov13}},
    {\cite{Cox62}} and \cite[Ch.~XI]{Feller71}.

    Stone~{\cite{StoneAMS65}},~{\cite{StoneTAMS65}} and later Carlsson~{\cite{Carlsson83}} showed that the renewal
    theorem is governed by the local behavior of the characteristic function near the origin, thereby making
    remainder terms explicitly dependent on moment conditions. In parallel, Chover and Ney~{\cite{ChoverN68}}
    established that the linear asymptotic growth of the renewal function remains robust under broad nonlinear
    extensions. Nevertheless, both frameworks rely critically on the existence of finite moments of order strictly
    greater than two, which ensure sufficient smoothness for higher-order expansions and quantitative error control.
    This observation naturally raises the question of how the renewal structure is altered when such regularity
    conditions fail. We are thus led to consider a heavy-tailed framework of the form
\begin{equation} \label{AISR1.2}
    \overline{F}(t):=1-F(t)=t^{-\alpha}L(t) \qquad \text{for} \quad {0<\alpha<3},
\end{equation}
    where $L(\cdot)$ belongs to $\mathcal{SV}_\infty$, the class of slowly varying functions at infinity.
    Recall that a measurable function $L:\mathbb{R}_+\to\mathbb{R}_+$ is said to be \emph{slowly varying (SV)
    at infinity} (in the sense of Karamata) if $\lim_{t\to\infty}{L(ct)}/{L(t)}=1$ for every fixed
    $c\in\mathbb{R}_+$. This class forms the natural analytic scale for describing asymptotic regimes beyond
    polynomial order. More generally, a function $R:\mathbb{R}_+\to\mathbb{R}_+$ is called \emph{regularly
    varying (RV) at infinity} with index $\rho\in(-\infty,+\infty)$, written $R\in\mathcal{RV}_{\infty}(\rho)$,
    if $\lim_{t\to\infty}{R(ct)}/{R(t)}=c^\rho$. Functions in this class admit the canonical representation
    $R(t)=t^{\rho}L(t)$ with $L\in\mathcal{SV}_\infty$. This decomposition separates the principal
    polynomial scaling from a SV correction and will play a central role in the asymptotic analysis
    throughout the paper; see Bingham, Goldie and Teugels~{\cite{BinghGT87}}. Consequently, {\eqref{AISR1.2}}
    implies that the distribution tail $\overline{F}(t)\in\mathcal{RV}_{\infty}(-\alpha)$. The parameter
    range $0<\alpha<3$ therefore yields three structurally distinct regimes:

\begin{itemize}
\item[\textbf{(i)}] $0<\alpha<1$ \quad  \,\textit{the infinite-mean case}; \label{AISR(i)}
\vspace{1.5mm}
\item[\textbf{(ii)}] $1<\alpha<2$ \quad  \,\textit{the finite-mean but infinite-variance regime}; \label{AISR(ii)}
\vspace{1.5mm}
\item[\textbf{(iii)}] $2<\alpha<3$ \quad  \,\textit{the finite-variance but infinite third-moment regime}. \label{AISR(iii)}
\end{itemize}

    In this framework, renewal asymptotics are no longer organized by integer moments but by the tail index $\alpha$. The
    natural analytical language therefore becomes that of Karamata's regular variation theory. Consequently, Tauberian
    methods and Karamata-type arguments replace classical moment expansions in deriving precise asymptotics
    and remainder estimates.

    The systematic study of regime~{\hyperref[AISR(i)]{\textbf{(i)}}} dates back to the early 1960s. Smith~{\cite{{Smith61}}}
    investigated renewal growth under $\mu=\infty$, demonstrating that the classical linear normalization must be replaced
    by a scale determined by the tail behavior of the distribution. Subsequently, Teugels~{\cite{Teugels68}} developed
    a systematic treatment within the framework of regular variation, establishing a form of the key renewal theorem for
    $0<\alpha\le{1}$. A decisive refinement was obtained by Erickson~{\cite{Erickson70}}, who proved strong renewal theorems
    for the increments $U(t+h)-U(t)$ in the RV setting, thereby identifying the precise asymptotic rate in terms of the
    truncated mean function $\mu(t)=\int_0^t\overline{F}(x)\,dx$. These results firmly established that, in the infinite-mean
    regime, renewal growth is governed entirely by the tail index $\alpha$ rather than by classical moment expansions.

    The regime~{\hyperref[AISR(ii)]{\textbf{(ii)}}} corresponds to the case of finite mean but infinite variance. In this
    regime, the renewal function admits a linear principal term $t/\mu$, yet the structure of the remainder becomes genuinely
    non-classical. Teugels~\cite{Teugels68} showed that if $\overline{F}(t)\in\mathcal{RV}_{\infty}(-\alpha)$ with $1<\alpha<2$,
    then the centered renewal function $U(t)-t/\mu$ belongs to $\mathcal{RV}_{\infty}(2-\alpha)$, thereby revealing that the
    deviation from linear growth is still dictated by the tail index. This perspective was further developed by
    Mohan~\cite{Mohan76}, who connected the renewal remainder to stable domains of attraction, thus providing a probabilistic
    interpretation of the asymptotic structure.  A concise synthesis of these developments, including explicit asymptotic
    formulas for $U(t)-t/\mu$, is provided in the monographic treatment of Mitov and Omey~{\cite{MitovO14}}, where the
    RV remainder is formulated in a unified framework.

    Narrowing our focus to the regime~{\hyperref[AISR(iii)]{\textbf{(iii)}}}, we enter a finite-variance heavy-tailed regime
    in which the classical second-order structure of the renewal theorem still persists, while the third moment diverges.
    In this transitional setting, the principal linear term survives, yet further refinement can no longer rely on higher-order
    moment expansions. Recent results, notably those of Geluk and Frenk~{\cite{GelukF2011}}, indicate that the next-order
    correction is governed by the integrated tail, revealing that even under finite variance, renewal remainders are driven
    by the RV structure rather than by successive integer moments. Thus, this regime forms a delicate boundary zone between
    classical smooth asymptotics and genuinely heavy-tailed behavior.

    Motivated by this boundary regime, we aim to characterize the influence of the RV tail on the rate
    of convergence in the elementary renewal theorem and to derive sharp remainder estimates under
    minimal assumptions. An important companion object is the renewal density $u(t):=U'(t)$, when it
    exists. While the elementary renewal theorem captures the leading behaviour of $U(t)$, refined
    asymptotics for both $U(t)$ and $u(t)$ are often required, yet become delicate under the heavy-tailed
    framework~\eqref{AISR1.2}, where moment-based expansions fail.

    Our primary objective is to develop a second-order asymptotic theory in the finite-variance RV regime,
    quantifying the leading deviations of $U(t)$ and $u(t)$ in terms of the tail index $\alpha$. The approach
    combines Laplace-transform analysis with Tauberian transfer, linking singular behaviour at the origin
    to precise time-domain asymptotics. We obtain explicit second-order asymptotics for $U(t)$ and $u(t)$
    for $2<\alpha<3$, provide a unified Laplace-domain framework, and isolate the underlying Tauberian
    mechanism. The critical case $\alpha=2$ is treated separately, where logarithmic effects emerge.   
    
    Our contribution refines and extends~{\cite{GelukF2011}} and~{\cite{MitovO14}} in three respects. First, 
    where the key renewal theorem of Mitov and Omey~{\cite[Th~1.14]{MitovO14}} identifies the leading renewal 
    remainder and Geluk and Frenk~{\cite{GelukF2011}} the leading finite-variance deviation of $U(t)$ from 
    its linear term $t\big/\mu$ through the integrated tail, {\hyperref[AISRTh1]{Theorem~\ref{AISRTh1}}} and
    {\hyperref[AISRTh2]{Theorem~\ref{AISRTh2}}} resolve that deviation to second order, separating a global 
    correction on the scale of the integrated tail from a local correction on the equilibrium-tail scale itself --- 
    a two-scale	structure not made explicit in either work. Second, neither treats the renewal density: 
    {\hyperref[AISRTh3]{Theorem~\ref{AISRTh3}}} and {\hyperref[AISRTh5]{Theorem~\ref{AISRTh5}}} supply its 
    second-order asymptotics, the local counterpart of the cumulative theory. Third, both remain in the strict 
    finite-variance interior $\alpha>2$, whereas {\hyperref[AISRTh4]{Theorem~\ref{AISRTh4}}}, and 
    {\hyperref[AISRCor2]{Corollary~\ref{AISRCor2}}} reach the critical threshold $\alpha=2$, where 
    the power-law hierarchy collapses and the second-order scale passes to the integrated SV tail 
    $\int_{t}^{\infty}\left[L(u)\big/u\right]\mathbf{d}u$, with logarithmic corrections beyond the reach 
    of the finite-variance expansions of~{\cite{GelukF2011}} and~{\cite{StoneAMS65}}. All three refinements 
    issue from a single Laplace-domain mechanism, developed uniformly across the interior and the boundary 
    in Section~\ref{IRSec2}.

    The remainder of the paper is organized as follows. In Section~\ref{IRSec2}, we present the
    principal results, including second-order asymptotics for the renewal function and the renewal
    density, as well as the critical case $\alpha=2$. Section~\ref{IRSec3} develops the underlying
    Laplace-transform analysis, including the asymptotic structure in the finite-variance regime,
    the critical case, and the Tauberian transfer mechanisms. The proofs of the main results are
    collected in Section~\ref{IRSec4}. Finally, Section~\ref{IRSec5} contains concluding remarks.

\section{Principal Results}      \label{IRSec2}

    In the regime~{\hyperref[AISR(i)]{\textbf{(i)}}}, the leading asymptotics are entirely tail-driven,
    leaving no natural analogue of a second-order refinement around a linear principal term. In the
    regime~{\hyperref[AISR(ii)]{\textbf{(ii)}}}, while a linear principal term $t/\mu$ re-emerges, the
    remainder is governed by stable-type behavior and is itself RV with index $2-\alpha$.
    The deviation from linear growth is still dictated by the heavy-tailed structure, and no finite-variance
    second-order expansion is available. The transition at $\alpha=2$ marks a structural turning point in
    the asymptotic behavior of the renewal function, to which we shall return with special attention below.
    For the regime~{\hyperref[AISR(iii)]{\textbf{(iii)}}}, the variance becomes finite while higher-order
    integer moments remain infinite, thereby creating a delicate boundary regime in which the classical
    renewal structure partially survives while moment-based refinements cease to apply. It is precisely
    this finite-variance RV regime that forms the focus at this stage.

\subsection{Second-order asymptotics for the renewal function}      \label{IRSsec2.1}

    Assume henceforth that $F$ is nonlattice. Introduce the equilibrium (stationary-excess)
    distribution associated with $F$,
\begin{equation}       \label{AISR2.1}
    \mu_F(t):=\frac{1}{\,\mu\,}{\int_{[0,t]}{\overline{F}(u)}\,du},
\end{equation}
    which defines a proper distribution function on $[0,\infty)$. A direct calculation shows that
    this transformation linearizes the renewal structure in the sense that $(\mu_F\ast{U})(t)=t\big/\mu$.
    Passing to the associated equilibrium tail $Q_F(t):=1-\mu_F(t)$, we obtain the canonical
    renewal decomposition
\begin{equation}       \label{AISR2.2}
    ({Q_F}\ast{U})(t)=U(t)-\frac{t}{\,\mu\,}{\raise1.5pt\hbox{,}}
\end{equation}
    which isolates the deviation of the renewal function from its principal linear term.
    As shown in~\cite[Th~1.14]{MitovO14}, the convolution term in~{\eqref{AISR2.2}}
    admits the first-order asymptotic behaviour
\[
    ({Q_F}\ast{U})(t)\sim \frac{1}{\,\mu\,}R(t) \qquad \text{as} \quad t\to\infty,
\]
    where
\begin{equation}       \label{AISR2.3}
    R(t)=\int_{[0,t]}{Q_F}(u)\,du
\end{equation}
    is the integrated equilibrium tail. This quantity thus emerges as the leading
    (first-order) renewal correction governing the asymptotic deviation in~\eqref{AISR2.2}.

    To access the next-order structure, assume that the second moment $\mu_2:=\mathbb{E}\tau^2$
    is finite, and let $\tau_F$ denote a random variable with distribution function $\mu_F$.
    Then
\begin{equation}       \label{AISR2.4}
    \mathbb{E}\tau_F={\int_{\mathbb{R}_+}{Q_F}(u)\,du}
    ={\int_{\mathbb{R}_+}u\,d\mu_F(u)}
    =\frac{1}{\,2\mu\,}\mathbb{E}\tau^2,
\end{equation}
    so that the integrated tail $R(t)$ in~\eqref{AISR2.3} converges to the finite limit
    $R(t)\to{\mu_2/(2\mu)}$ as $t\to\infty$. Substituting this asymptotic into
    the decomposition~{\eqref{AISR2.2}}, we arrive at the refined limit relation
\begin{equation}       \label{AISR2.5}
    U(t)-\frac{t}{\,\mu\,}\longrightarrow {\frac{\mu_2}{2\mu^2}} \qquad \text{as} \quad t\to\infty,
\end{equation}
    which is precisely Stone's~\cite{StoneAMS65} sharpening of the elementary renewal theorem.
    In view of~\eqref{AISR2.5}, we shall henceforth write
\[
    \mathcal{S}_\mu:=\frac{\mu_2}{2\mu^2}
\]
    and refer to $\mathcal{S}_\mu$ as the \emph{Stone constant}, which represents the universal
    second-order correction in the finite-variance regime.

    While relation~{\eqref{AISR2.5}} identifies the limiting constant of the renewal remainder, it provides
    no information about the rate at which this limit is approached. More generally, it is natural to investigate
    the finer asymptotic structure of renewal convolutions of the form $(g\ast U)(t)$, which lie at the heart of
    the key renewal theorem. Our first main result establishes a refined form of the key renewal theorem in the
    regime {\hyperref[AISR(iii)]{\textbf{(iii)}}}.

\begin{theorem}        \label{AISRTh1}
    Suppose that the inter-arrival distribution $F$ admits the representation {\eqref{AISR1.2}} corresponding
    to the regime {\hyperref[AISR(iii)]{\textbf{(iii)}}} and ${\mathcal{S}_\mu}:={\mu_2}\big/\left(2\mu^2\right)$
    denote the Stone constant. Then the following refined form of the key renewal theorem holds:
\begin{equation}     \label{AISR2.6}
    {\int_{[0,t]}Q_F(t-x)\,dU(x)}={\frac{1}{\,\mu\,}}R(t)+2{\mathcal{S}_\mu}\,Q_F(t)\!\big(1+o(1)\big)
    \qquad \text{as} \quad {t\to\infty},
\end{equation}
    where $R(t)$ is the leading renewal term, defined in {\eqref{AISR2.3}} and
\begin{equation}\label{AISR2.7}
    {Q_F}(t)={\frac{1}{\,\mu\beta\,}}t^{-\beta}\,L_Q(t),
\end{equation}
    therein $\beta=\alpha-1$ and $L_Q(t)\big/L(t)\to{1}$ as $t\to\infty$.
\end{theorem}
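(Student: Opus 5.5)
The plan is to decompose the renewal measure around its linear part and analyse the resulting convolution in the Laplace domain, then return to the time domain by a Tauberian transfer. By the renewal decomposition \eqref{AISR2.2} and Stone's relation \eqref{AISR2.5}, the function $W(x):=U(x)-x/\mu$ is bounded and $W(x)\to\mathcal{S}_\mu$. Writing $dU(x)=\mu^{-1}dx+dW(x)$ on $[0,t]$, with $dW$ including the atom $\delta_0$, gives
\[
\int_{[0,t]}Q_F(t-x)\,dU(x)=\frac{1}{\mu}\int_0^t Q_F(y)\,dy+\int_{[0,t]}Q_F(t-x)\,dW(x)=\frac{1}{\mu}R(t)+\Delta(t).
\]
This isolates the leading term $R(t)/\mu$ of \eqref{AISR2.6} exactly. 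The whole task is then to show $\Delta(t)=2\mathcal{S}_\mu Q_F(t)(1+o(1))$.

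First I will record the representation \eqref{AISR2.7}. By Karamata's theorem, $\int_t^\infty\overline F(u)\,du\sim t^{1-\alpha}L(t)/(\alpha-1)$, which gives $Q_F\in\mathcal{RV}_\infty(-\beta)$ with $L_Q\sim L$. I will also use $\rho(t)=\int_t^\infty Q_F(u)\,du\in\mathcal{RV}_\infty(1-\beta)$.

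Next I pass to Laplace transforms. Let $\varphi(s)=\int e^{-sx}dF(x)$ and $\widehat Q_F(s)=\int_0^\infty e^{-sx}Q_F(x)\,dx$. Then
\[
\widehat U(s)=\frac{1}{1-\varphi(s)},\qquad 1-\varphi(s)=\mu s\bigl(1-s\widehat Q_F(s)\bigr).
\]
The last identity is the transform form of $\mu_F\ast U=t/\mu$. Since $\widehat Q_F(s)=\mu\mathcal{S}_\mu-\widehat\rho(s)$ with $\widehat\rho(s)\to 0$ regularly varying of index $\beta-2\in(0,1)$, I expand
\[
\widehat U(s)=\frac{1}{\mu s}+\frac{\widehat Q_F(s)}{\mu}+s\,\frac{\widehat Q_F(s)^2}{\mu}+\dots .
\]
Multiplying by $\widehat Q_F(s)$ gives the transform of the left side of \eqref{AISR2.6}: $\widehat R(s)/\mu+\widehat Q_F(s)^2/\mu+O(s\widehat Q_F^3)$, where $\widehat R(s)=\widehat Q_F(s)/s$. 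The term $\widehat Q_F^2/\mu$ is the transform of $(Q_F\ast Q_F)(t)/\mu$. The convolution of two integrable regularly varying tails of index $-\beta<-1$ satisfies the standard subexponential-type relation
\[
(Q_F\ast Q_F)(t)\sim 2\Bigl(\int_0^\infty Q_F\Bigr)Q_F(t)=2\mu\mathcal{S}_\mu\,Q_F(t).
\]
The factor $2$ arises exactly as the two contributions of the regions $x\le\varepsilon t$ and $t-x\le\varepsilon t$, on each of which $Q_F$ at the far argument is $\sim Q_F(t)$ uniformly. Hence $\Delta(t)\approx(Q_F\ast Q_F)(t)/\mu\sim 2\mathcal{S}_\mu Q_F(t)$, which is the claimed constant.

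To make this rigorous in the time domain I will avoid Tauberian inversion of the full transform. Instead I will use the exact identity behind the expansion: iterating $U=1/\mu\cdot(\text{Lebesgue})+Q_F\ast U$ twice gives
\[
Q_F\ast U=\frac{R}{\mu}+\frac{1}{\mu}\,Q_F\ast Q_F+\frac{1}{\mu}\,Q_F\ast Q_F\ast\bigl(U-\delta_0\bigr)\ast\,\cdots,
\]
organised so that the remainder is $Q_F\ast Q_F\ast dW$ with $dW$ a signed measure of locally bounded variation. The main obstacle is to show this remainder is $o(Q_F(t))$. The natural route is the same splitting argument: on $x\le\varepsilon t$ use uniform convergence of $Q_F\ast Q_F(t-x)/Q_F(t)$, and on the complementary part use $\sup_{x\le t}|W(x)|<\infty$ together with Blackwell's theorem, which controls $dW$ on unit intervals. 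If the signed-measure bookkeeping proves delicate, the fallback is to apply Karamata's Tauberian theorem to the monotone pieces $U$ and $x/\mu$ separately. This would be combined with the smooth-variation (monotone density) version, legitimate because $Q_F$ is nonincreasing, to transfer the $s\to0$ expansion $\widehat Q_F^2/\mu=2\mathcal{S}_\mu\widehat Q_F(s)-\mu^{-1}\ldots$ back to $t$.
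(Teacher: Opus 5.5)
\textbf{The step that fails is the time-domain control of the remainder.} Your Laplace-domain computation is the same mechanism the paper uses: in Lemmas~\ref{AISRLem3}--\ref{AISRLem4}, $\mathcal{L}\Delta_R\approx\mu^{-1}(\mathcal{L}Q_F)^2$, and its singular part matches that of $2\mathcal{S}_\mu\mathcal{L}Q_F$. Your observation $(Q_F\ast Q_F)(t)\sim 2\mu\mathcal{S}_\mu Q_F(t)$ explains the constant $2\mathcal{S}_\mu$ more cleanly than the paper does. The gap is the step you yourself flag: converting this into a time-domain statement with error $o(Q_F(t))$.

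First, your exact identity is misstated. From $\widehat W(s)=s\widehat U(s)\mathcal{L}Q_F(s)=\mathcal{L}Q_F(s)\bigl(\mu^{-1}+s\widehat W(s)\bigr)$ one gets
\[
\Delta_R(t)=\frac1\mu(Q_F\ast Q_F)(t)+\int_{[0,t]}k(t-x)\,dW(x),\qquad k:=(Q_F\ast Q_F)'.
\]
So the remainder carries an extra derivative. The term $\int_{[0,t]}(Q_F\ast Q_F)(t-x)\,dW(x)$ that you wrote has transform tending to $\mathcal{S}_\mu(\mu\mathcal{S}_\mu)^2\neq0$. Heuristically it is about $\mathcal{S}_\mu(Q_F\ast Q_F)(t)\asymp Q_F(t)$, so it could never be shown negligible.

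Second, and more seriously, $k(0)=Q_F(0)^2=1$. Hence the local mass of $dW=dU-dx/\mu$ in a bounded window $[t-A,t]$ enters the remainder undamped. Blackwell's theorem and $\sup|W|<\infty$ bound that contribution by $O(1)$, or at best $o(1)$, never by $o(t^{1-\alpha}L(t))$; that would require a quantitative local renewal theorem.

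The Karamata fallback does not repair this. Monotone Tauberian theorems applied to $U$ and to $x/\mu$ only return $U(t)\sim t/\mu$. Monotonicity of $Q_F$ is not a Tauberian condition for $\Delta_R$, which is the function you actually need to invert. The paper's proof of Lemma~\ref{AISRLem4} makes the same unjustified ``Tauberian transfer'', with no condition verified on $\Delta_R$, so it does not supply the missing ingredient either.

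\textbf{No argument can close this gap under the stated hypotheses, because \eqref{AISR2.6} is false for general nonlattice $F$.} Take $F$ purely atomic with $\mathbb{P}(\tau=\sqrt2)=q\in(0,1)$ and the remaining mass on $\{1,2,\dots\}$, arranged so that $\mathbb{P}(\tau>t)\sim c\,t^{-\alpha}$ with $2<\alpha<3$. This $F$ is nonlattice and satisfies \eqref{AISR1.2}.
\begin{itemize}
\item All atoms of $U$ lie in $\{m+n\sqrt2:\ m,n\in\mathbb{N}_0\}$. This set meets $(t,t+1]$ in at most $t$ points for large $t$, since there is at most one $m$ for each of the at most $(t+1)/\sqrt2+1$ admissible $n$.
\item By Blackwell, $U((t,t+1])\to 1/\mu$. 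Hence there is an atom $x_0\in(t,t+1]$ with $U(\{x_0\})\ge 1/(2\mu t)$.
\item By \eqref{AISR2.2}, $\Delta_R(y)=U(y)-y/\mu-R(y)/\mu$ jumps by exactly $U(\{x_0\})$ at $x_0$.
\item But \eqref{AISR2.6} would give $|\Delta_R|\le 3\mathcal{S}_\mu Q_F(t-1)=O(t^{1-\alpha}L(t))=o(1/t)$ on $(t-1,t+1]$, a contradiction.
\end{itemize}
So some hypothesis forcing local regularity of $U$ at precision $o(Q_F(t))$ is indispensable. Examples are $F$ spread out with a suitably regular density, or a direct Tauberian condition on $\Delta_R$. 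Only under such a hypothesis can your splitting of $\int_{[0,t]}k(t-x)\,dW(x)$ be carried out.
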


    The asymptotic representation obtained in {\hyperref[AISRTh1]{Theorem~\ref{AISRTh1}}} not only identifies
    the second-order structure of the renewal convolution, but also provides a refined description of the manner
    in which it approaches its limiting value. In particular, it reveals an explicit asymptotic decomposition of
    the deviation from the Stone constant $\mu_S$, separating the contribution of the accumulated tail from the
    local correction term. This is made precise in the following theorem. Define the tail of the leading renewal
    term as
\[
    \rho(t):=\int_{[t,\infty)}Q_F(u)\,du,
\]
    which represents the remaining mass of $Q_F$ beyond level $t$.

\begin{theorem}        \label{AISRTh2}
    Suppose that the conditions of {\hyperref[AISRTh1]{Theorem~\ref{AISRTh1}}} hold and set $\gamma:=\alpha-2$.
    Then the deviation of the renewal convolution from the Stone constant admits the following asymptote:
\begin{equation}     \label{AISR2.8}
    {\mathcal{S}_\mu}-{\int_{[0,t]}Q_F(t-x)\,dU(x)}
    ={\frac{1}{\,\mu\,}}\rho(t)-2{\mathcal{S}_\mu}\,Q_F(t)\!\big(1+o(1)\big)
\end{equation}
    as $t\to\infty$, where $\rho(t)$ is entirely governed, at the asymptotic level, by the tail
    of $Q_F(t)$, namely,
\begin{equation}\label{AISR2.9}
    {\frac{\rho(t)}{Q_F(t)}} \sim {\frac{1}{\,\gamma\,}}\,{t} \qquad \text{as} \quad {t\to\infty}.
\end{equation}
\end{theorem}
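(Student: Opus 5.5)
Theorem~\ref{AISRTh2} is essentially a corollary of Theorem~\ref{AISRTh1}. The plan has two steps: rewrite the leading term $R(t)$ through its complement $\rho(t)$, and then apply Karamata's theorem to $Q_F$.

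\emph{Step 1: algebraic identity.} Since $\alpha>2$, we have $\mu_2<\infty$, and by \eqref{AISR2.4}
\[
\int_{\mathbb{R}_+}Q_F(u)\,du=\frac{\mu_2}{2\mu}=\mu\,\mathcal{S}_\mu .
\]
Consequently $R(t)=\mu\mathcal{S}_\mu-\rho(t)$ for every $t$, which gives
\[
\frac{1}{\mu}R(t)=\mathcal{S}_\mu-\frac{1}{\mu}\rho(t).
\]
Substituting this into \eqref{AISR2.6} from Theorem~\ref{AISRTh1} and moving terms across yields \eqref{AISR2.8}. The remainder $2\mathcal{S}_\mu Q_F(t)(1+o(1))$ only changes sign; the $o(1)$ is the same function as in Theorem~\ref{AISRTh1}, so no new error analysis is needed.

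\emph{Step 2: asymptotics of $\rho$.} By \eqref{AISR2.7}, $Q_F\in\mathcal{RV}_\infty(-\beta)$ with $\beta=\alpha-1>1$. Karamata's theorem for tail integrals (Bingham--Goldie--Teugels~\cite{BinghGT87}, Prop.~1.5.10) states that if $f\in\mathcal{RV}_\infty(-\beta)$ with $\beta>1$, then
\[
\int_t^\infty f(u)\,du\sim \frac{t\,f(t)}{\beta-1}.
\]
Applying this with $f=Q_F$ gives $\rho(t)\sim t\,Q_F(t)/(\alpha-2)=t\,Q_F(t)/\gamma$, which is exactly \eqref{AISR2.9}.

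To be self-contained, I would derive \eqref{AISR2.7} directly. Write $Q_F(t)=\mu^{-1}\int_t^\infty u^{-\alpha}L(u)\,du$ and apply the same Karamata theorem with index $\alpha>1$; this gives $Q_F(t)\sim t^{1-\alpha}L(t)/(\mu\beta)$. Defining $L_Q(t):=\mu\beta\,t^{\beta}Q_F(t)$ then makes $L_Q/L\to1$, so $L_Q$ is slowly varying.

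\emph{Main obstacle.} There is no real obstacle; the point needing care is interpretation. Because $\rho(t)/Q_F(t)\sim t/\gamma\to\infty$, the term $2\mathcal{S}_\mu Q_F(t)$ in \eqref{AISR2.8} is of strictly smaller order than $\rho(t)/\mu$. Formula \eqref{AISR2.8} therefore has content only as an exact two-term statement in which the second-order term is understood with its own $(1+o(1))$ factor, as inherited from Theorem~\ref{AISRTh1}. I would remark on this explicitly, to make clear that \eqref{AISR2.8} is not absorbed into a cruder $\rho(t)(1+o(1))/\mu$ statement.
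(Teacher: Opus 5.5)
Your proposal is correct and is essentially the paper's proof. The paper likewise uses the exact identity $\mathcal{S}_\mu=\frac{1}{\mu}R(t)+\frac{1}{\mu}\rho(t)$ to write $\mathcal{S}_\mu-(Q_F\ast U)(t)=\frac{1}{\mu}\rho(t)-\Delta_R(t)$, inserts $\Delta_R(t)\sim 2\mathcal{S}_\mu Q_F(t)$ from Lemma~\ref{AISRLem4} (the same input you take from Theorem~\ref{AISRTh1}), and gets \eqref{AISR2.9} from Karamata's tail-integral theorem applied to $Q_F\in\mathcal{RV}_\infty(-\beta)$ with $\beta>1$.

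As you observe, the only substantive content of \eqref{AISR2.8} is $\Delta_R(t)\sim 2\mathcal{S}_\mu Q_F(t)$, so both arguments are exactly as strong as Lemma~\ref{AISRLem4}. Note that its Tauberian inversion tacitly requires $\Delta_R$ to be ultimately nonnegative and monotone, and the paper does not verify this.
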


    As a direct consequence of {\hyperref[AISRTh2]{Theorem~\ref{AISRTh2}}}, we obtain the following
    explicit second-order asymptotic expansion for the renewal function.

\begin{corollary}        \label{AISRCor1}
    Under the conditions of {\hyperref[AISRTh1]{Theorem~\ref{AISRTh1}}},
    the renewal function admits the asymptotic expansion
\begin{equation}       \label{AISR2.10}
    {U(t)}=\frac{t}{\,\mu\,}+{\mathcal{S}_\mu}-{\frac{1}{\,\mu^2\beta\gamma\,}}\,{t^{-\gamma}}L_{\gamma}(t),
\end{equation}
    where $\beta=\alpha-1$, $\gamma=\alpha-2$ and $L_{\gamma}(t)\big/L(t)\to{1}$ as $t\to\infty$.
\end{corollary}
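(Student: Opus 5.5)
The plan is to obtain the corollary from Theorem~\ref{AISRTh2}, combined with the canonical decomposition \eqref{AISR2.2}; the only work left is to identify the correct scale. By \eqref{AISR2.2} the renewal convolution equals $U(t)-t/\mu$. Substituting this into \eqref{AISR2.8} gives
\[
U(t)=\frac{t}{\,\mu\,}+\mathcal{S}_\mu-\frac{1}{\,\mu\,}\rho(t)+2\mathcal{S}_\mu\,Q_F(t)\big(1+o(1)\big).
\]

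The next step is to compare the two correction terms. From \eqref{AISR2.9}, $Q_F(t)=\gamma\,\rho(t)\,t^{-1}(1+o(1))$, so $Q_F(t)=o(\rho(t))$. The local term $2\mathcal{S}_\mu Q_F(t)$ can therefore be absorbed into the global one:
\[
U(t)=\frac{t}{\,\mu\,}+\mathcal{S}_\mu-\frac{1}{\,\mu\,}\rho(t)\big(1+o(1)\big).
\]
To make $\rho$ explicit, I would combine \eqref{AISR2.7} with \eqref{AISR2.9}, or equivalently apply Karamata's theorem directly to $\rho(t)=\int_t^\infty Q_F(u)\,du$ with $Q_F\in\mathcal{RV}_\infty(-\beta)$ and $\beta>1$. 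Either route gives
\[
\rho(t)=\frac{1}{\,\mu\beta\gamma\,}\,t^{-\gamma}L_Q(t)\big(1+o(1)\big).
\]
Then $\rho(t)/\mu=\frac{1}{\mu^2\beta\gamma}t^{-\gamma}L_\gamma(t)$, where I define
\[
L_\gamma(t):=\mu^2\beta\gamma\,t^{\gamma}\,\frac{\rho(t)}{\mu}\big(1+o(1)\big)=L_Q(t)\big(1+o(1)\big).
\]
Since $L_Q(t)/L(t)\to1$ by Theorem~\ref{AISRTh1}, we get $L_\gamma(t)/L(t)\to1$.

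The main subtlety is interpretive rather than analytic. Expansion \eqref{AISR2.10} is an exact identity only once $L_\gamma$ is \emph{defined} so as to absorb all $(1+o(1))$ factors, including the negligible contribution $2\mathcal{S}_\mu Q_F(t)$. I would therefore set
\[
L_\gamma(t):=\mu^2\beta\gamma\,t^{\gamma}\Big(\frac{t}{\,\mu\,}+\mathcal{S}_\mu-U(t)\Big).
\]
With this definition \eqref{AISR2.10} holds identically, and the steps above show $L_\gamma(t)/L(t)\to1$. This also automatically confirms that $L_\gamma$ is slowly varying.

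The one point I would double-check is the Karamata step. One needs $\beta=\alpha-1>1$, which holds since $\alpha>2$, so that $\int_t^\infty u^{-\beta}L_Q(u)\,du\sim t^{1-\beta}L_Q(t)/(\beta-1)$ with $\beta-1=\gamma$. This step is routine in the present regime \textbf{(iii)}.
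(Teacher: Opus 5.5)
Your proposal is correct and is essentially the paper's own proof. Both arguments rewrite \eqref{AISR2.8} via \eqref{AISR2.2}, note that $Q_F(t)=o(\rho(t))$ so the local term $2\mathcal{S}_\mu Q_F(t)$ can be absorbed, evaluate $\rho(t)\sim\frac{1}{\mu\beta\gamma}t^{-\gamma}L_Q(t)$ by Karamata's theorem, and fold the $(1+o(1))$ factors into $L_\gamma$. Your explicit choice $L_\gamma(t)=\mu^2\beta\gamma\,t^{\gamma}\bigl(t/\mu+\mathcal{S}_\mu-U(t)\bigr)$ just makes precise the paper's phrase ``absorbing the negligible error term into the SV factor.''
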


\begin{remark}        \label{AISRRem1}
    The above results show that, in the regime {\hyperref[AISR(iii)]{\textbf{(iii)}}}, the approach of
    the renewal function to its limiting linear form is governed by an additional heavy-tail correction
    of order $t^{-\gamma}L_\gamma(t)$, where $\gamma=\alpha-2$. This term reflects the residual influence
    of the heavy-tailed structure of the inter-arrival distribution beyond the classical finite-variance
    approximation.
\end{remark}

    When the inter-arrival distribution admits a density, the renewal measure is absolutely continuous, and
    its density $u(t)=U'(t)$ satisfies the classical renewal equation. While the renewal function $U(t)$ describes
    the cumulative structure of the renewal process, the density $u(t)$ reflects its local behaviour. Asymptotic
    results for $u(t)$ may therefore be viewed as a differential counterpart of those obtained for $U(t)$. We
    now turn to the asymptotic behaviour of the renewal density $u(t)$ under the same heavy-tailed framework
    and derive the corresponding second-order renewal asymptotics.

\subsection{Second-order asymptotics for the renewal density}       \label{IRSsec2.2}

    We now turn to the local counterpart of the results obtained for the renewal function. When the inter-arrival
    distribution admits a density, the renewal measure is absolutely continuous, and its density $u(t)=U'(t)$ describes
    the instantaneous renewal intensity. While the renewal function $U(t)$ captures the cumulative structure of the
    renewal process, the density $u(t)$ reflects its local behaviour on the time scale.

    Under the heavy-tailed framework considered above, one may expect that the second-order renewal structure obtained
    for $U(t)$ has a natural local analogue. The aim of this subsection is therefore to identify the corresponding
    second-order asymptotics for the renewal density $u(t)$. When $F$ admits a density $f$, the associated renewal
    density $u$ satisfies the classical renewal equation
\[
    u(t)=f(t)+(f\ast u)(t) \qquad \text{as} \quad {t\ge{0}}.
\]
    Passing to the Laplace-transform domain yields
\begin{equation}       \label{AISR2.11}
        \mathcal{L}u(s)=\frac{\mathcal{L}f(s)}{1-\mathcal{L}f(s)} {\raise1.5pt\hbox{,}}
\end{equation}
    where and throughout the paper, the symbol ``$\mathcal{L}{g}$'' denotes the (ordinary) Laplace transform (LT),
    defined for a locally integrable function $g$ by $\mathcal{L}g(s)=\int_{0}^{\infty} e^{-st}g(t)\,dt$.

    Representation \eqref{AISR2.11} shows that the asymptotic behaviour of the renewal density is governed by the same
    renewal singularity as that of the renewal function, but at a more local level. In particular, the analysis developed
    in {\hyperref[IRSec3]{Section~\ref{IRSec3}}} remains applicable in this setting and provides the basis for deriving
    second-order asymptotics for $u(t)$.

\begin{theorem}        \label{AISRTh3}
    Suppose that the inter-arrival distribution $F$ admits a density $f$ and the representation \eqref{AISR1.2}
    corresponding to the regime {\hyperref[AISR(iii)]{\textbf{(iii)}}}. Then the renewal density $u(t)=U'(t)$
    satisfies the asymptotic relation
\begin{equation}       \label{AISR2.12}
    u(t)=\frac{1}{\,\mu\,}+\frac{1}{\,\mu^2\beta\,}\,t^{-\beta}L_{\beta}(t),
\end{equation}
    where $\beta=\alpha-1$ and $L_{\beta}(t)\big/L(t)\to{1}$ as $t\to\infty$.
\end{theorem}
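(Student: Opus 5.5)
The plan is to reduce the statement to a Laplace-domain expansion of $\mathcal{L}u(s)-1/(\mu s)$ near $s=0$, and then to transfer the singular part back to the time domain by a Tauberian argument. First we record the target shape. By \eqref{AISR2.7}, $Q_F(t)=(\mu\beta)^{-1}t^{-\beta}L_Q(t)$, so \eqref{AISR2.12} is equivalent to
\[
u(t)-\frac{1}{\,\mu\,}\sim\frac{1}{\,\mu\,}Q_F(t)\qquad\text{as}\quad t\to\infty .
\]
This is exactly what one obtains by formally differentiating the expansion of Theorem~\ref{AISRTh1}, in view of \eqref{AISR2.2} and $R'(t)=Q_F(t)$: one gets $u(t)=1/\mu+Q_F(t)/\mu+2\mathcal{S}_\mu Q_F'(t)(1+o(1))$, and the last term is $o(Q_F(t))$ heuristically. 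The proof should turn this heuristic into a rigorous statement.

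\emph{Step 1: Laplace expansion.} Set $h(t):=u(t)-1/\mu$. From $u=f+f\ast u$ we get $h=z+f\ast h$, with
\[
z(t):=f(t)-\frac{1}{\,\mu\,}\overline{F}(t),\qquad \int_0^\infty z=0,
\]
and hence $h=z\ast U$. By \eqref{AISR2.11},
\[
\mathcal{L}h(s)=\frac{\mathcal{L}f(s)}{1-\mathcal{L}f(s)}-\frac{1}{\mu s}.
\]
Using the equilibrium density $\overline{F}/\mu$, we have $1-\mathcal{L}f(s)=\mu s\,\mathcal{L}(\overline{F}/\mu)(s)$ and $1-\mathcal{L}(\overline{F}/\mu)(s)=s\,\mathcal{L}Q_F(s)$. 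Since $Q_F\in\mathcal{RV}_\infty(-\beta)$ with $\beta\in(1,2)$, the analysis of Section~\ref{IRSec3} (Karamata's theorem for Laplace transforms) gives
\[
\mathcal{L}Q_F(s)=\mathbb{E}\tau_F-c_\beta\, s^{\beta-1}L_Q(1/s)\big(1+o(1)\big),\qquad c_\beta=\frac{\Gamma(1-\beta)}{\mu\beta}\cdot(-1)\ \text{(up to sign)}.
\]
Substituting and expanding to second order, the constant terms should assemble into $\mathcal{S}_\mu-1$, the Laplace image of the atom-free part. The singular term should then come out as
\[
\frac{1}{\,\mu\,}\Big(\mathcal{L}Q_F(s)-\mathbb{E}\tau_F\Big)\big(1+o(1)\big),
\]
that is, the Laplace image of $Q_F/\mu$ modulo analytic terms. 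This computation is routine bookkeeping with \eqref{AISR2.4}, and the value $\mathbb{E}\tau_F=\mu_2/(2\mu)$ serves as a check on the constants.

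\emph{Step 2: Tauberian transfer.} Step~1 shows that $\int_t^\infty h$ behaves like $\rho(t)/\mu$. More concretely, integrating $h$ and using Theorem~\ref{AISRTh2}, we get $\int_0^t h=\mathcal{S}_\mu-\rho(t)/\mu+O(Q_F(t))$. Combined with \eqref{AISR2.9}, this gives the integrated statement
\[
\int_t^\infty h(x)\,dx\sim\frac{1}{\mu\gamma}\,t\,Q_F(t)\in\mathcal{RV}_\infty(-\gamma).
\]
To pass from this integrated statement to $h(t)\sim Q_F(t)/\mu$, I would apply the monotone density theorem \cite{BinghGT87}. This requires $h$ to be eventually monotone, or at least to satisfy a one-sided Tauberian condition such as slow decrease of $t^{\beta}h(t)$.

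\emph{Main obstacle.} The key difficulty is justifying this Tauberian condition for $h$. Nothing in \eqref{AISR1.2} controls the local regularity of $f$. My first attempt is to verify it through the representation $h=z\ast U$. Split the convolution into $\int_0^{t/2}$ and $\int_{t/2}^t$. On $[t/2,t]$, Blackwell's theorem gives $U(x+\delta)-U(x)\to\delta/\mu$, which makes this part asymptotically $\mu^{-1}\int_0^{t/2}z$. Since $z$ has zero total integral, this equals $-\mu^{-1}\int_{t/2}^\infty z=\mu^{-1}\overline{F}(t/2)\,O(1)$, a regularly varying quantity. On $[0,t/2]$, use $U(x)=x/\mu+O(1)$ and integrate by parts against the regularly varying tail of $z$. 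If this direct route yields only one-sided bounds, I would impose (as is implicit in the paper's density setting) that $f$ is eventually monotone, equivalently that $\overline{F}$ has an eventually monotone density. Under that assumption, $z$ is eventually of one sign and monotone, and the slow-decrease condition follows, completing the transfer.
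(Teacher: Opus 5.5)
\textbf{There is a genuine gap.} It sits at the Tauberian side condition in Step~2, and your proposed way of securing it does not work.

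First, how the paper handles the same point. Its proof of Theorem~\ref{AISRTh3} obtains $\mathcal{L}v(0)-\mathcal{L}v(s)\sim c\,s^{\beta-1}L(1/s)$ exactly as in your Step~1. It then applies the converse half of Lemma~\ref{AISRLem-Tauber} with the words ``since $v$ is ultimately monotone by assumption''. That hypothesis is stated in Theorem~\ref{AISRTh5}, not in Theorem~\ref{AISRTh3}.

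If you grant the same hypothesis, your Step~2 is correct and is a mild variant of the paper's argument. You replace Karamata's Tauberian theorem for $\int_t^\infty h$ by the time-domain input; only the first-order form, Corollary~\ref{AISRCor1}, is needed, which also makes Step~1 superfluous. The monotone density theorem then gives $h(t)\sim\gamma t^{-1}\int_t^\infty h\sim Q_F(t)/\mu$. There is one small slip: $\int_0^t h=U(t)-1-t/\mu$ because of the atom of $U$ at $0$, so the constant is $\mathcal{S}_\mu-1$. The tail integral $\int_t^\infty h=\mathcal{S}_\mu-(U(t)-t/\mu)$ is unaffected.

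Your attempt to derive the side condition from the data fails for two reasons.
\begin{itemize}
\item Blackwell's theorem controls $\int_{[t/2,t]}z(t-x)\,U(dx)$ only up to $o(1)$. That is far too crude at the scale $t^{-\beta}L(t)$.
\item Eventual monotonicity of $f$ constrains $z$ only at large arguments. But $h(t)=\int_{[0,t]}z(t-x)\,U(dx)$ also sees $z$ near the origin, i.e.\ $f$ on compact sets, integrated against the local structure of $U$ near $t$. Besides, $z=f-\overline{F}/\mu$ is the difference of a nonincreasing and a nondecreasing function, so it need not be monotone.
\end{itemize}

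In fact no argument of this type can succeed. Let $f$ contain an integrable singularity $c\,|t-1|^{-1}\log^{-2}|t-1|$ near $t=1$, and be otherwise smooth and eventually decreasing with $\overline{F}(t)=t^{-\alpha}L(t)$.
\begin{itemize}
\item The Fourier transform of the singular part decays only like $1/\log|\xi|$.
\item Hence its $2k$-fold self-convolution has a nonnegative, non-integrable Fourier transform, and is therefore essentially unbounded near $0$.
\item Consequently $u\ge f^{\ast 2k}$ is essentially unbounded near every even integer $2k$, so even $u(t)\to 1/\mu$ fails.
\end{itemize}

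So Theorem~\ref{AISRTh3} requires a genuinely local hypothesis, such as the ultimate monotonicity of $u-1/\mu$ that the paper uses tacitly. With it your proof goes through. Without it, \eqref{AISR2.12} is false in general under \eqref{AISR1.2} alone, even if $f$ is eventually monotone.
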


\subsection{Renewal Asymptotics at the Critical Case $\alpha=2$}   \label{IRSsec2.3}

    The case $\alpha=2$ represents a critical threshold in the renewal asymptotic structure, separating the
    finite-variance regime $\alpha>2$ from the infinite-variance regime $\alpha<2$. In contrast to the strictly
    finite-variance case, where moment conditions are determined solely by the power exponent, the finiteness
    of the second moment at this critical threshold depends on the integrability of the unction ${L(t)\big/t}$.

    In what follows, we restrict attention to the subregime $\mu_2<\infty$. More precisely, if
\[
    \overline{F}(t)=t^{-2}L(t),
\]
    then the second moment is finite if and only if the condition
\[
    \int^\infty \frac{L(t)}{t}\,dt<\infty
\]
    is satisfied. In particular, if $L(t)\to\infty$ or $L(t)\to c>0$, the above integral diverges, and
    hence $\mu_2=\infty$. Thus, at this critical threshold the variance is no longer determined by the
    power decay alone, but by the fine asymptotic behavior of the SV factor.

    This sensitivity renders the case $\alpha=2$ qualitatively distinct from both regimes $\alpha>2$
    and $\alpha<2$, and necessitates a separate asymptotic analysis.

    We now establish the renewal asymptotics at the critical threshold $\alpha=2$.

\begin{theorem}        \label{AISRTh4}
    Let the inter-arrival distribution $F$ admit the representation {\eqref{AISR1.2}} with $\alpha=2$. Then the
    deviation  of the renewal convolution from the Stone constant admits the following asymptotic representation:
\begin{equation}     \label{AISR2.14}
    {\mathcal{S}_\mu}-{\int_{[0,t]}Q_F(t-x)\,dU(x)}=
    {\frac{1}{\,\mu\,}}\rho(t)-{\frac{2\mathcal{S}_\mu}{\,\mu{t}\,}}\,{L_\alpha(t)},
\end{equation}
    with
\[
    \rho(t)\sim \frac{1}{\,\mu\,}\int_t^\infty \frac{L(u)}{u}\,du  \qquad \text{as} \quad  t\to\infty,
\]
    where $L_\alpha(t)\big/L(t)\to{1}$ as $t\to\infty$.
\end{theorem}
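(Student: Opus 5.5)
The plan is to reduce the statement to two separate facts. The first is a purely real-variable computation of $\rho(t)$ at $\alpha=2$. The second is the renewal mechanism already used for Theorems~\ref{AISRTh1} and~\ref{AISRTh2}, re-run with $\beta=1$ and $\gamma=0$. Throughout we work in the subregime $\mu_2<\infty$, i.e.\ $\int^\infty L(u)u^{-1}\,du<\infty$, so that $\mathcal{S}_\mu$ is finite.

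\emph{Step 1: the integrated tail.} From \eqref{AISR2.1} we have $Q_F(t)=\mu^{-1}\int_t^\infty v^{-2}L(v)\,dv$. Karamata's theorem then gives $Q_F(t)=(\mu t)^{-1}L_\alpha(t)$ with $L_\alpha\sim L$, which is the form of the local term in \eqref{AISR2.14}. Next, Fubini gives
\[
\rho(t)=\frac1\mu\int_t^\infty (v-t)\,v^{-2}L(v)\,dv=\frac1\mu\int_t^\infty\frac{L(v)}{v}\,dv-t\,Q_F(t).
\]
Set $\ell(t):=\int_t^\infty L(v)v^{-1}dv$. This is slowly varying, and $L(t)/\ell(t)\to0$ (the standard de~Haan-type fact for integrals of $L(v)/v$, proved by splitting $\int_t^{ct}$ and letting $c\to\infty$). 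Since $tQ_F(t)\sim L(t)/\mu$, it follows that $\rho(t)\sim\mu^{-1}\ell(t)$, and moreover $\rho(t)/Q_F(t)\to\infty$ faster than $t$. This is the collapse of the relation \eqref{AISR2.9} at $\gamma=0$.

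\emph{Step 2: exact decomposition.} Using \eqref{AISR2.4} we write $\mathcal{S}_\mu=\mu^{-1}\big(R(t)+\rho(t)\big)$. Hence the left side of \eqref{AISR2.14} equals
\[
\frac1\mu\rho(t)-\Big[(Q_F\ast U)(t)-\frac1\mu R(t)\Big].
\]
It therefore suffices to prove
\[
(Q_F\ast U)(t)-\frac1\mu R(t)=2\mathcal{S}_\mu Q_F(t)(1+o(1)),
\]
i.e.\ relation \eqref{AISR2.6} of Theorem~\ref{AISRTh1}, now at $\alpha=2$. To see this, write $W(x):=U(x)-x/\mu$, whose measure has unit atom at $0$, and split
\[
(Q_F\ast U)(t)-\frac1\mu R(t)=\int_{[0,t]}Q_F(t-x)\,dW(x).
\]
In Laplace terms, $\mathcal{L}(Q_F\ast dU)(s)-\mathcal{L}R'(s)/\mu=\big(1-\hat\varphi_e(s)\big)\big(1/(1-\hat\varphi(s))-1/(\mu s)\big)$, where $\hat\varphi$ is the LT of $F$ and $\hat\varphi_e=(1-\hat\varphi)/(\mu s)$.

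\emph{Step 3: expansion near $s=0$.} Expand
\[
1-\hat\varphi(s)=\mu s-\tfrac12\mu_2 s^2+r(s),
\]
where, by the Tauberian relations of Section~\ref{IRSec3} for a tail in $\mathcal{RV}_\infty(-2)$ with finite second moment, $r(s)=o(s^2)$ and $r(s)\sim s^2\,\ell(1/s)$ up to constants. Then
\[
\frac{1}{1-\hat\varphi(s)}-\frac{1}{\mu s}=\mathcal{S}_\mu+O\big(\ell(1/s)\big),
\]
and $1-\hat\varphi_e(s)\sim s\,\mathcal{L}Q_F(s)$. Multiplying, the principal part is $\mathcal{S}_\mu\big(1-\hat\varphi_e(s)\big)$ plus a contribution coming from the atom, which together give $2\mathcal{S}_\mu\,\mathcal{L}Q_F$-type behaviour. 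The leftover term is of order $\ell(1/s)\big(1-\hat\varphi_e(s)\big)$, which is $o\big(1-\hat\varphi_e(s)\big)$.

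\emph{Step 4: Tauberian transfer, the main obstacle.} We invert the expansion using the monotone-density Tauberian theorem, applicable because $Q_F$ is nonincreasing. The difficulty is that at $\alpha=2$ the Laplace transform of $Q_F$ (index $-1$) is itself only slowly varying at the origin. The correction term therefore cannot be separated from the principal one by power order, only by the factor $\ell(1/s)\to0$. To handle this, I would first try to work in the time domain: use the convolution representation together with Stone's limit $W(x)\to\mathcal{S}_\mu+1$-type bounds. I would split $\int_{[0,t]}$ into $[0,t/2]$ and $(t/2,t]$. On $[0,t/2]$, the uniform convergence theorem for $Q_F(t-x)/Q_F(t)$ applies. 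On $(t/2,t]$, I would integrate by parts, so that the contribution is controlled by $\sup_{x>t/2}|W(x)-\mathcal{S}_\mu|\cdot Q_F(0)$-type terms, shown to be $o(Q_F(t))$ via the Laplace estimate of Step 3. If this direct split fails, the fallback is a de~Haan $\Pi$-class Tauberian argument applied to $t\mapsto tQ_F(t)$.
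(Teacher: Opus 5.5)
Your Steps~1--2 are correct and follow the paper's reduction: the decomposition $\mathcal{S}_\mu-(Q_F\ast U)(t)=\mu^{-1}\rho(t)-\Delta_R(t)$ together with Lemma~\ref{AISRLem-critical1}. Your identity $\rho(t)=\mu^{-1}\ell(t)-tQ_F(t)$ is a clean way to get $\rho\sim\ell/\mu$. Everything then rests on proving $\Delta_R(t)\sim 2\mathcal{S}_\mu Q_F(t)$, and this is where there is a genuine gap.

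Already the bookkeeping in Step~3 is off. Set $M=\mu\mathcal{S}_\mu$ and $h(s)=M-\mathcal{L}Q_F(s)\sim\rho(1/s)$. Your product then equals $s\bigl(M-h(s)\bigr)\bigl(\mathcal{S}_\mu-\mu^{-1}h(s)+O(s)\bigr)$.
\begin{itemize}
\item The term $sM\mathcal{S}_\mu$ only records $\int_0^\infty\Delta_R$. All information about $t\to\infty$ sits in the $s\,h(s)$ terms, which add up to $-2\mathcal{S}_\mu s\,h(s)$.
\item One $\mathcal{S}_\mu$ comes from $\mathcal{S}_\mu(1-\hat\varphi_e)$. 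The other comes from exactly the ``leftover'' $-\mu^{-1}h(s)\bigl(1-\hat\varphi_e(s)\bigr)$ that you discard as $o(1-\hat\varphi_e)$.
\item There is no separate atom contribution, since the atom is already inside $\widehat U$. Measuring sizes against $1-\hat\varphi_e(s)\approx Ms$ is the wrong yardstick.
\end{itemize}

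The decisive failure is Step~4. Integrate by parts on each half, using only $W(x)\to\mathcal{S}_\mu$ and regular variation. This gives
\[
\int_{[0,t/2]}Q_F(t-x)\,dW(x)=\mathcal{S}_\mu Q_F(t)+o\bigl(Q_F(t)\bigr),
\]
\[
\int_{(t/2,t]}Q_F(t-x)\,dW(x)=\frac{1}{\mu}\int_0^{t/2}\Bigl(U(t)-U(t-y)-\frac{y}{\mu}\Bigr)\overline F(y)\,dy+o\bigl(Q_F(t)\bigr).
\]
So the piece on $(t/2,t]$ is not negligible: it must produce the second $\mathcal{S}_\mu Q_F(t)$.

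Bounding that piece by $\sup_{x>t/2}|W(x)-\mathcal{S}_\mu|$ cannot give $o(Q_F(t))$. Since $\mathcal{S}_\mu-W(t)=\mu^{-1}\rho(t)-\Delta_R(t)$, that supremum is at least $|W(t)-\mathcal{S}_\mu|\sim\rho(t)/\mu$ whenever the theorem holds. You showed yourself that $\rho(t)/Q_F(t)\to\infty$, so no Laplace estimate can rescue this bound.

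What is missing is a second-order \emph{local} renewal estimate: $U(t)-U(t-y)-y/\mu\approx\mu^{-1}y\,Q_F(t)$, on average against $\overline F(y)\,dy$. Combined with $\int_0^\infty y\overline F(y)\,dy=\mu_2/2$, this gives exactly $\mathcal{S}_\mu Q_F(t)$. That information is contained neither in Stone's limit nor in the small-$s$ behaviour of the transforms. Some local regularity of $F$, of the kind assumed in Theorem~\ref{AISRTh5}, is what one would need to obtain it.

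Your de~Haan fallback runs into the same wall. It needs Laplace increments of $\Delta_R$ accurate to $o(L(1/s))$, finer than your $O(\ell(1/s))$. It also needs a Tauberian side condition (eventual positivity or monotonicity of $\Delta_R$) and a monotone-density step. None of these is available.

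For comparison, the paper closes this step through Lemma~\ref{AISRLem-critical-DeltaR}: the expansion $\mathcal{L}\Delta_R(s)=\mu\mathcal{S}_\mu^2-2\mathcal{S}_\mu h(s)+o(h(s))$, followed by the converse half of Lemma~\ref{AISRLem-Tauber-critical}. However, an $o(h(s))$ error can at most identify $\int_t^\infty\Delta_R\sim 2\mathcal{S}_\mu\rho(t)$. Moreover, the inference $H\sim C\ell\Rightarrow H(t)-H(\lambda t)\sim C\bigl(\ell(t)-\ell(\lambda t)\bigr)$ used there (with $\ell=\zeta_L$) is not valid. So the obstacle you flagged is real, and the paper's route does not remove it either.
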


    As a direct consequence of {\hyperref[AISRTh4]{Theorem~\ref{AISRTh4}}}, we obtain the following explicit
    second-order asymptotic expansion for the renewal function.

\begin{corollary}        \label{AISRCor2}
    At the critical threshold $\alpha=2$ the following renewal asymptotic expansion holds:
\begin{equation}\label{eq:critical-U}
    U(t)=\frac{t}{\,\mu\,}+{\mathcal{S}_\mu} - \frac{1}{\,\mu\,}\,\rho(t)\bigl(1+o(1)\bigr)
    \qquad \text{as} \quad  t\to\infty
\end{equation}
    with $\rho(t)$ defined in {\hyperref[AISRTh4]{Theorem~\ref{AISRTh4}}}.
\end{corollary}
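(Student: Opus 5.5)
Corollary~\ref{AISRCor2} follows from Theorem~\ref{AISRTh4} in the same way that Corollary~\ref{AISRCor1} follows from Theorem~\ref{AISRTh2}. The plan has three steps: express $U(t)-t/\mu$ through the renewal convolution, substitute the critical-case expansion, and show that the local correction is negligible relative to $\rho(t)$.

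\emph{Step 1.} The canonical decomposition \eqref{AISR2.2} gives $U(t)-t/\mu=(Q_F\ast U)(t)=\int_{[0,t]}Q_F(t-x)\,dU(x)$. Hence
\[
U(t)-\frac{t}{\,\mu\,}-\mathcal{S}_\mu=-\Bigl(\mathcal{S}_\mu-\int_{[0,t]}Q_F(t-x)\,dU(x)\Bigr).
\]
Inserting \eqref{AISR2.14}, we obtain
\[
U(t)=\frac{t}{\,\mu\,}+\mathcal{S}_\mu-\frac{1}{\,\mu\,}\rho(t)+\frac{2\mathcal{S}_\mu}{\mu t}L_\alpha(t).
\]
It therefore suffices to show that $L_\alpha(t)/t=o(\rho(t))$ as $t\to\infty$.

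\emph{Step 2.} By Theorem~\ref{AISRTh4}, $\rho(t)\sim\mu^{-1}\int_t^\infty L(u)u^{-1}\,du=:\mu^{-1}\widetilde L(t)$. Since $L_\alpha\sim L$, the claim reduces to
\[
\frac{L(t)}{t\,\widetilde L(t)}\to 0 .
\]
This holds for a much stronger reason than needed, namely $L(t)=o(\widetilde L(t))$. The latter is the standard de Haan-type fact for slowly varying $L$ with $\int^\infty L(u)/u\,du<\infty$ (see Bingham--Goldie--Teugels~\cite{BinghGT87}): $\widetilde L$ is itself slowly varying and $L(t)/\widetilde L(t)\to0$.

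If we only need the weaker statement, it has a direct proof. For fixed $c>1$, the uniform convergence theorem for slowly varying functions gives
\[
\widetilde L(t)\ge\int_t^{ct}\frac{L(u)}{u}\,du\sim L(t)\log c .
\]
Hence $\limsup_{t\to\infty} L(t)/\widetilde L(t)\le 1/\log c$, and letting $c\to\infty$ gives $L(t)/\widetilde L(t)\to0$. Even the trivial bound $\widetilde L(t)\ge \tfrac12 L(t)\log 2$ for large $t$ would already yield the required $o(1/t)$ comparison.

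\emph{Step 3.} Combining the two steps, $\frac{2\mathcal{S}_\mu}{\mu t}L_\alpha(t)=o(\rho(t))$, so
\[
U(t)=\frac{t}{\,\mu\,}+\mathcal{S}_\mu-\frac{1}{\,\mu\,}\rho(t)\bigl(1+o(1)\bigr),
\]
which is \eqref{eq:critical-U}.

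The only potential obstacle is the tacit requirement that $\mathcal{S}_\mu$ be finite, i.e. $\int^\infty L(u)/u\,du<\infty$. This is the standing subregime $\mu_2<\infty$ adopted before Theorem~\ref{AISRTh4}, and it is the same condition that makes $\rho(t)$ finite and $\rho(t)\to0$. With it in place, the argument above is complete.
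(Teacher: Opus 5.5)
Your argument is correct and is essentially the paper's proof. You rearrange \eqref{AISR2.14} via \eqref{AISR2.2}, then show that $2\mathcal{S}_\mu L_\alpha(t)/(\mu t)=o(\rho(t))$ using $L(t)=o\bigl(\int_t^\infty L(u)u^{-1}\,du\bigr)$, which follows from the uniform convergence theorem. The only difference is cosmetic: the paper takes $c=2$ and invokes slow variation of the integrated tail, while you let $c\to\infty$ and correctly note that the factor $1/t$ alone already suffices.
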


    The above results complete the description of the renewal asymptotics at the critical threshold $\alpha=2$.
    The following remark clarifies the structure of the corresponding second-order scale.

\begin{remark}[Critical tail scale and absence of universality]
    In the critical case $\alpha=2$, the second-order behaviour is governed by the integrated
    tail $\rho(t)\sim \int_t^\infty\left[L(u)\big/{\mu{u}}\right]du$ which should be viewed
    as the natural critical scale replacing the power-law asymptotics present in the
    regime {\hyperref[AISR(iii)]{\textbf{(iii)}}}. In contrast to that regime, no universal
    normalization of the form $c\,L(t)$ exists in general. The asymptotic behaviour of $\rho(t)$
    is intrinsically nonlocal and depends on the entire tail profile of $L$, rather than on its
    pointwise value at $t$. In this sense, the critical case exhibits a genuinely different
    asymptotic mechanism, where second-order corrections are determined by tail accumulation
    rather than local regular variation. In concrete logarithmic models, this nonlocal scale
    can nevertheless be evaluated explicitly. For instance, if
\[
    L(t)\sim\frac{1}{\log^{1+\delta}{t}} \qquad \text{as} \quad t\to\infty,
\]
    for some $\delta>0$, then
\[
    \rho(t)\sim \frac{1}{\mu\delta}\frac{1}{\log^{\delta}{t}} \qquad \text{as} \quad t\to\infty.
\]
    Such examples illustrate that the critical regime forms a boundary between power-law
    asymptotics and logarithmic corrections, with the integrated tail acting as the
    canonical second-order scale.
\end{remark}

    We now complete the critical-case analysis by establishing the corresponding asymptotic
    behaviour of the renewal density.

\begin{theorem}        \label{AISRTh5}
    Let the assumptions of {\hyperref[AISRTh4]{Theorem~\ref{AISRTh4}}} hold, and suppose further that
    the inter-arrival distribution $F$ admits a density $f$. Assume, in addition, that the remainder
    $u(t)-{1/\mu}$ is ultimately monotone as $t\to\infty$. Then the renewal density $u(t)$
    admits the asymptotic representation
\begin{equation}\label{AISR2.16}
    u(t)=\frac{1}{\,\mu\,}+\frac{1}{\mu^2}\,\frac{L_2(t)}{t}
    \qquad \text{as} \quad t\to\infty,
\end{equation}
    where $L_2(t)\big/L(t)\to1$ as $t\to\infty$.
\end{theorem}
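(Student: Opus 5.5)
Write $\overline{F}(t)=t^{-2}L(t)$ with $\mu_2<\infty$, so that $\int^\infty L(u)\,u^{-1}du<\infty$. Set $v(t):=u(t)-1/\mu$. The goal is to show $v(t)\sim \mu^{-2}L(t)/t$. The plan is to work in the Laplace domain, identify the singular part of $\mathcal{L}v(s)$ as $s\downarrow0$, and then transfer it back to the time domain with a monotone-density Tauberian argument. This is exactly where the ultimate monotonicity hypothesis enters.

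\emph{Laplace side.} Integrating by parts, $\mathcal{L}f(s)=1-s\,\mathcal{L}\overline{F}(s)$, and $\mathcal{L}\overline{F}(s)=\mu\,\mathcal{L}\mu_F'(s)=\mu\bigl(1-s\,\mathcal{L}Q_F(s)\bigr)$. Hence
\[
1-\mathcal{L}f(s)=\mu s\bigl(1-s\,\mathcal{L}Q_F(s)\bigr).
\]
Substituting into \eqref{AISR2.11} and expanding gives
\[
\mathcal{L}u(s)-\frac{1}{\mu s}=\frac{1}{\mu s}\cdot\frac{1-\mu s\bigl(1-s\mathcal{L}Q_F(s)\bigr)-\bigl(1-s\mathcal{L}Q_F(s)\bigr)}{1-s\mathcal{L}Q_F(s)}
=\frac{1}{\mu}\,\mathcal{L}Q_F(s)-1+O\bigl(s\,\mathcal{L}Q_F(s)\bigr).
\]
After the routine algebra is organised, the term that carries the irregular part is $\mu^{-1}\mathcal{L}Q_F(s)$; the rest is analytic-type and $O(1)$.

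Next, subtract the constant part. Since $\mathcal{L}Q_F(0)=\mathbb{E}\tau_F=\mu_2/(2\mu)$, we have
\[
\mathcal{L}Q_F(0)-\mathcal{L}Q_F(s)=\int_0^\infty(1-e^{-st})Q_F(t)\,dt=s\,\mathcal{L}\rho(s),
\]
where $\rho(t)=\int_t^\infty Q_F$. This is the same mechanism the paper uses for Theorem~\ref{AISRTh4}. The tail facts are $Q_F(t)=\mu^{-1}\int_t^\infty L(u)u^{-2}du\sim L(t)/(\mu t)$ by Karamata, and $\rho(t)\sim\mu^{-1}\int_t^\infty L(u)u^{-1}du$, which is slowly varying. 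I would therefore write
\[
\mathcal{L}v(s)=c_0+\frac{1}{\mu}\bigl(\mathcal{L}Q_F(s)-\mathcal{L}Q_F(0)\bigr)+o\bigl(\text{that}\bigr),
\]
check that the constants cancel, and use the Dirac-free part. This identifies $\mathcal{L}v(s)$, up to terms that do not affect the Tauberian step, with $\mu^{-1}\mathcal{L}Q_F(s)$ to leading order. Equivalently, $v$ should behave like $\mu^{-1}$ times a function whose transform has the same behaviour near $0$ as $\mathcal{L}Q_F$.

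\emph{Tauberian transfer.} To keep the correction on a regularly varying scale, I would pass to the integrated remainder $V(t):=\int_0^t v(x)\,dx=U(t)-t/\mu$. Corollary~\ref{AISRCor2} and Theorem~\ref{AISRTh4} give
\[
\mathcal{S}_\mu-V(t)=\frac{1}{\mu}\rho(t)\bigl(1+o(1)\bigr),
\qquad\text{so}\qquad
\int_t^\infty v(x)\,dx\sim\frac{1}{\mu}\rho(t)\sim\frac{1}{\mu^2}\int_t^\infty\frac{L(u)}{u}\,du.
\]
The right-hand side is slowly varying, and its de Haan-type derivative is $-\mu^{-2}L(t)/t$. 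Since $v$ is ultimately monotone, the monotone density theorem applies to the tail integral $\int_t^\infty v$.

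\emph{Main obstacle.} Here lies the principal difficulty. The plain monotone density theorem applied to a slowly varying tail $T(t)$ only yields $v(t)=o(T(t)/t)$. It does not give the exact rate $L(t)/t$, because $L(t)/t$ is of smaller order than $\rho(t)/t$ (note $L(t)/\int_t^\infty L(u)u^{-1}du\to0$). I would overcome this with the de Haan class $\Pi$ version. The function $\int_t^\infty L(u)u^{-1}du$ lies in de Haan's class with auxiliary function $L$. The needed statement is that if $\int_t^\infty v=\mu^{-2}\int_t^\infty L(u)u^{-1}du+o(L(t))$ and $v$ is ultimately monotone, then $t\,v(t)\sim\mu^{-2}L(t)$ (Bingham--Goldie--Teugels, Thm.~3.6.8-type monotone density for $\Pi$).

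This requires the remainder in Theorem~\ref{AISRTh4} to be sharp to $o(L(t))$ rather than $o(\rho(t))$. Fortunately Theorem~\ref{AISRTh4} supplies this: its correction term is explicitly $2\mathcal{S}_\mu L_\alpha(t)/(\mu t)=o(L(t))$. Also, $\rho$ itself can be written as $\mu^{-1}\int_t^\infty L(u)u^{-1}du+O(L(t))$ after an integration by parts of $Q_F$. I would verify that this $O(L(t))$ term is in fact a constant multiple, $\int_t^\infty(Q_F-L/(\mu u))\,du=o(L(t))$, via the uniform convergence theorem. Then the $\Pi$-monotone density theorem gives $v(t)\sim\mu^{-2}L(t)/t$, which is \eqref{AISR2.16} with $L_2:=\mu^2\,t\,v(t)$.
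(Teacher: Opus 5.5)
Your route differs from the paper's. The paper stays in the Laplace domain. From $\mathcal{L}u=\widehat U-1$ and Lemma~\ref{AISRLem-critical-Uhat-refined} it obtains $\mathcal{L}v(0)-\mathcal{L}v(s)\sim\mu^{-1}\rho(1/s)$, and then applies the converse half of Lemma~\ref{AISRLem-Tauber-critical}. You work in the time domain, using the exact identity
\[
H(t):=\int_t^\infty v(x)\,dx=\mathcal{S}_\mu-\Bigl(U(t)-\frac{t}{\mu}\Bigr)=\frac{1}{\mu}\rho(t)-\Delta_R(t),
\]
together with a monotone-density argument for de Haan's class $\Pi$.

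Your ``main obstacle'' paragraph is correct, and it identifies a step the paper does not justify. In the converse of Lemma~\ref{AISRLem-Tauber-critical}, the paper passes from $H\sim C\zeta_L$ (with $\zeta_L(t):=\int_t^\infty L(u)u^{-1}du$) to $H(t)-H(\lambda t)\sim C(\zeta_L(t)-\zeta_L(\lambda t))$. This does not follow, because both increments are $o(\zeta_L(t))$. For example, take $g(t)=(1+\kappa\sin\log t)/(t\log^2t)$ with $0<\kappa<1/2$. It is ultimately decreasing, and $H(t)=1/\log t+O(\log^{-2}t)\sim\zeta_L(t)$ for $L(t)=\log^{-2}t$, yet $t\,g(t)/L(t)$ oscillates. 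An $o(h(s))$ error in the Laplace expansion cannot resolve the scale $L(t)/t$ in any case. Your route therefore avoids the weak point of the paper's proof.

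One claim in your write-up is false, although the argument does not need it. The gap between $\rho$ and $\mu^{-1}\zeta_L$ is not $o(L(t))$. By Fubini,
\[
\mu\rho(t)=\int_t^\infty(w-t)\overline F(w)\,dw=\zeta_L(t)-\mu\,tQ_F(t),
\]
so $\int_t^\infty\bigl(Q_F(u)-L(u)/(\mu u)\bigr)du=-tQ_F(t)\sim-L(t)/\mu$. Hence the hypothesis of your ``needed statement'' fails; in fact $H(t)=\mu^{-2}\zeta_L(t)-\mu^{-2}L(t)(1+o(1))$.

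What rescues the argument is that only increments enter. By the uniform convergence theorem, $\rho(t)-\rho(\lambda t)=\int_t^{\lambda t}Q_F(u)\,du\sim\mu^{-1}L(t)\log\lambda$. So $H(t)-H(\lambda t)\sim\mu^{-2}L(t)\log\lambda$ as soon as $\Delta_R(t)=o(L(t))$. These increments are eventually positive, which forces $v$ to be eventually nonincreasing. Then use the sandwich $(\lambda-1)t\,v(\lambda t)\le H(t)-H(\lambda t)\le(\lambda-1)t\,v(t)$ at $t$ and at $t/\lambda$, and let $\lambda\downarrow1$; this gives $t\,v(t)\sim\mu^{-2}L(t)$. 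State the $\Pi$-step in this form and drop the $o(L(t))$ comparison of $\rho$ with $\zeta_L$.

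Second, taking $\Delta_R=o(L)$ from the sharp correction in Theorem~\ref{AISRTh4} uses more than you need. It is also your least secure input, since that correction comes from Lemma~\ref{AISRLem-critical-DeltaR}, which relies on the same borderline transfer. The bound follows directly from your own hypothesis:
\begin{itemize}
\item Since $dU=\delta_0+u\,dx$, we have $\Delta_R(t)=Q_F(t)+\int_0^tQ_F(t-x)v(x)\,dx$.
\item $v$ is ultimately monotone with convergent integral, hence ultimately of one sign, integrable, and $x\,v(x)\to0$.
\item Splitting the integral at $t/2$ gives
\[
|\Delta_R(t)|\le Q_F(t)+Q_F(t/2)\int_0^\infty|v|+\mu\mathcal{S}_\mu\sup_{x\ge t/2}|v(x)|=O\bigl(L(t)/t\bigr)+o(1/t)=o\bigl(L(t)\bigr).
\]
\end{itemize}
This makes your proof self-contained.

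A minor point: $\int_0^tv=U(t)-1-t/\mu$, because $U$ carries the atom at $0$, and $\int_0^\infty v=\mathcal{S}_\mu-1$. Your two slips cancel, so your formula for $\int_t^\infty v$ is right. The Laplace-side paragraph is never used and can be omitted.
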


    The proofs of the principal theorems are based on a refined LT analysis developed in the next section.

\section{Laplace-Transform Analysis}          \label{IRSec3}

    In this section, we develop the analytic framework underlying the general results. Our approach relies
    on a structural analysis of the renewal equation in the LT domain, which allows us to derive precise
    asymptotic information in the time domain.

    Let $F$ be nonlattice with $\mu:=\mathbb{E}\tau\in(0,\infty)$ and let $U$ denote the associated renewal
    function. Assuming $Q$ to be nonnegative and locally integrable on $[0,\infty)$, we define
\begin{equation}     \label{AISR3.1}
    \Delta_R(t):=(Q\ast{U})(t)-{\frac{1}{\,\mu\,}}R(t),
\end{equation}
    where $(Q\ast{U})(t):=\int_{[0,t]} Q(t-x)\,dU(x)$. The quantity $\Delta_R(t)$, renewal correction term,
    measures the deviation of the convolution $(Q\ast{U})(t)$ from the first-order renewal approximation as $t\to\infty$.

    The structure and asymptotics of the remainder $\Delta_R(t)$ become particularly transparent in the LT domain.
    We assume that the ordinary LT
\[
    \mathcal{L}Q(s):=\int_{[0,\infty)}e^{-su}Q(u)\,du
\]
    is finite for each $s\in\mathbb{R}_+$. By linearity of LT, we write
\begin{equation}     \label{AISR3.2}
    \mathcal{L}\Delta_R(s)=\mathcal{L}(Q\ast{U})(s)-{\frac{1}{\,\mu\,}}\mathcal{L}R(s).
\end{equation}

    Since $Q$ is nonnegative, Fubini's theorem justifies an interchange of the order of integration, and hence
\[
\begin{aligned}
    \mathcal{L}(Q\ast{U})(s)
    &=\int_{[0,\infty)}e^{-st}\left(\int_{[0,t]}Q(t-u)\,dU(u)\right)dt
\\
    &=\int_{[0,\infty)}\left(\int_{[u,\infty)}e^{-st}Q(t-u)\,dt\right)dU(u).
\end{aligned}
\]
    With the change of variables $v=t-u$, the inner integral becomes
\[
    \int_{[u,\infty)}e^{-st}Q(t-u)\,dt= e^{-su}\int_{[0,\infty)}e^{-sv}Q(v)\,dv= e^{-su}\mathcal{L}Q(s).
\]
    Substitution gives
\begin{equation}     \label{AISR3.3}
    \mathcal{L}(Q\ast{U})(s)=\mathcal{L}Q(s)\int_{[0,\infty)}e^{-su}\,dU(u)=\mathcal{L}Q(s)\cdot\widehat{U}(s),
\end{equation}
    where and throughout the paper, the hat symbol ``\,$\widehat{[\cdot]}$\,'' denotes the Laplace--Stieltjes
    transform (LST). In particular, $\widehat{U}(s):=\int_{[0,\infty)}e^{-st}\,dU(t)$ is the LST of $U(t)$.
    Next, applying Fubini's theorem once more, we obtain
\[
\begin{aligned}
    \mathcal{L}R(s)
    &=\int_{[0,\infty)}e^{-st}\left(\int_{[0,t)}Q(u)\,du\right)dt
\\
    &=\int_{[0,\infty)}\left(\int_{[u,\infty]}e^{-st}\,dt\right)Q(u)\,du
    =\frac{1}{\,s\,}\int_{[0,\infty)}{e^{-su}}Q(u)\,du,
\end{aligned}
\]
    and hence
\begin{equation}     \label{AISR3.4}
    \mathcal{L}R(s)=\frac{1}{\,s\,}\mathcal{L}Q(s).
\end{equation}
    Combining now {\eqref{AISR3.2}}---{\eqref{AISR3.4}}, we arrive at
\begin{equation}          \label{AISR3.5}
    \mathcal{L}\Delta_R(s)=\mathcal{L}Q(s)\left(\widehat U(s)-\frac{1}{\,\mu{s}\,}\right).
\end{equation}
    Finally, by the classical LST identity for the renewal function $\widehat{U}(s)\bigl(1-\widehat{F}(s)\bigr)=1$,
    equation {\eqref{AISR3.5}} can be altered as follows:
\[
    \mathcal{L}\Delta_R(s)=\mathcal{L}Q(s)\left(\frac{1}{1-\widehat F(s)}-{\frac{1}{\,\mu{s}\,}}\right).
\]
    This representation shows that the remainder term $\Delta_R(t)$ is governed by the cancellation of the principal
    singular term at $s=0$ in the LST of the renewal function. In other words, $\Delta_R(t)$ represents the residual
    contribution that remains after removing the leading term dictated by the elementary renewal theorem. From this
    viewpoint, the remainder $\Delta_R(t)$ reflects a Stone-type compensation phenomenon: once the intrinsic linear
    growth of the renewal function is subtracted, the remaining convolution term is naturally controlled by the scale
    of the kernel $Q$. Thus, the asymptotic behaviour of $\Delta_R(t)$ is governed not by the renewal function itself,
    but by the tail structure encoded in $Q$. This structural observation provides the guiding principle for the analysis
    below. In the next lemmas we make this mechanism precise and derive a quantitative estimate for the remainder $\Delta_R(t)$.

\subsection{LT Analysis in the Regime~{\hyperref[AISR(iii)]{\textbf{(iii)}}}}    \label{IRSec3.1}

    As before, denote by ${\mathcal{S}_\mu}:={\mu_2}\left/\left(2\mu^2\right)\right.$ the Stone constant.

\begin{lemma}    \label{AISRLem1}
    Let the inter-arrival distribution $F$ admit the representation {\eqref{AISR1.2}} corresponding to the
    regime {\hyperref[AISR(iii)]{\textbf{(iii)}}} with Stone constant ${\mathcal{S}_\mu}$.
    Then the following relation holds:
\begin{equation}          \label{AISR3.6}
    \widehat{U}(s)=\frac{1}{\mu{s}}+{\mathcal{S}_\mu}-{\frac{1}{\,\mu\,}}{\psi_{\alpha}(s)},
\end{equation}
    where
\begin{equation}\label{eq:psi-main}
    {\psi_{\alpha}(s)}={\frac{\pi}{\Gamma(\alpha)\sin(\pi\alpha)}}{\frac{1}{\,\mu\,}}s^{\alpha-2}L(1/s)
    \bigl(1+o(1)\bigr) \qquad \text{as} \quad {s\downarrow{0}};
\end{equation}
    in this and throughout the paper, $\Gamma(\cdot)$ denotes Euler's Gamma function.
\end{lemma}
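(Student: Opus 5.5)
The plan is to expand $1-\widehat F(s)$ near $s=0$ up to the first non-analytic term, and then invert the expansion using the identity $\widehat U(s)=1/(1-\widehat F(s))$. That identity holds because $\mathbb U$ contains the atom $\delta_0$.

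\emph{Step 1 (reduction to the tail).} Integrating by parts gives $1-\widehat F(s)=s\,\varphi(s)$ with $\varphi(s):=\int_0^\infty e^{-st}\,\overline F(t)\,dt$. Since $\mu=\int_0^\infty\overline F$ and $\mu_2/2=\int_0^\infty t\,\overline F(t)\,dt$ are both finite in regime \textbf{(iii)}, we can add and subtract the first two Taylor terms of $e^{-st}$:
\[
\varphi(s)=\mu-\frac{\mu_2}{2}\,s+I(s),\qquad I(s):=\int_0^\infty\bigl(e^{-st}-1+st\bigr)\,t^{-\alpha}L(t)\,dt .
\]
The integrand of $I$ is nonnegative, and $I(s)=o(s)$ because $e^{-st}-1+st\le st$ and the integrand tends to $0$ pointwise after division by $s$.

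\emph{Step 2 (Abelian evaluation of $I$).} Substitute $t=x/s$. This gives
\[
I(s)=s^{\alpha-1}\int_0^\infty\bigl(e^{-x}-1+x\bigr)\,x^{-\alpha}L(x/s)\,dx .
\]
We want to show $I(s)\sim c_\alpha\, s^{\alpha-1}L(1/s)$ with
\[
c_\alpha=\int_0^\infty(e^{-x}-1+x)\,x^{-\alpha}\,dx=\Gamma(1-\alpha)=\frac{\pi}{\Gamma(\alpha)\sin(\pi\alpha)}>0 .
\]
The value $c_\alpha$ follows by integrating by parts twice down to $\Gamma(3-\alpha)/((\alpha-1)(\alpha-2))$ and then applying the reflection formula; note that $\sin(\pi\alpha)>0$ for $\alpha\in(2,3)$. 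This convergence is the main obstacle. Pointwise, $L(x/s)/L(1/s)\to1$, so we need a dominating function for dominated convergence. We use Potter's bounds $L(x/s)/L(1/s)\le A\max(x^{\varepsilon},x^{-\varepsilon})$, valid when $x/s$ and $1/s$ are both large. We choose $\varepsilon<\min(3-\alpha,\alpha-2)$, so the kernel $(e^{-x}-1+x)x^{-\alpha}$, which behaves like $x^{2-\alpha}$ near $0$ and like $x^{1-\alpha}$ at $\infty$, stays integrable against $x^{\pm\varepsilon}$.

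The range $t\le t_0$ (i.e.\ $x\le st_0$), where Potter's bounds are not available, must be handled separately. There we bound the integrand by $s^2t^2\overline F(t)$, which contributes $O(s^2)=o(s^{\alpha-1}L(1/s))$ since $\alpha-1<2$. Alternatively one may quote the standard Abelian theorem from~\cite{BinghGT87} for the second-order Laplace remainder.

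\emph{Step 3 (inversion).} Steps 1 and 2 give $\varphi(s)=\mu\bigl(1-\varepsilon(s)\bigr)$ with
\[
\varepsilon(s)=\frac{\mu_2}{2\mu}s-\frac{c_\alpha}{\mu}s^{\alpha-1}L(1/s)(1+o(1))=O(s).
\]
Hence $1/\varphi(s)=\mu^{-1}\bigl(1+\varepsilon(s)+O(s^2)\bigr)$, and therefore
\[
\widehat U(s)=\frac{1}{s\varphi(s)}=\frac{1}{\mu s}+\frac{\mu_2}{2\mu^2}-\frac{c_\alpha}{\mu^2}\,s^{\alpha-2}L(1/s)\bigl(1+o(1)\bigr)+O(s).
\]
Since $\alpha-2<1$, the $O(s)$ term is $o\bigl(s^{\alpha-2}L(1/s)\bigr)$ and is absorbed into the $o(1)$. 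The constant term equals $\mathcal S_\mu$. Writing the last term as $\mu^{-1}\psi_\alpha(s)$ yields exactly \eqref{AISR3.6}, with $\psi_\alpha$ given by \eqref{eq:psi-main}.
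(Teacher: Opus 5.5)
Your proposal is correct and follows essentially the same route as the paper. The paper also writes $1-\widehat F(s)=s\,\mathcal{L}\overline F(s)$, isolates $B(s)=\int_0^\infty(e^{-st}-1+st)\overline F(t)\,dt$, rescales $x=st$, and applies Potter's bounds with dominated convergence to get $B(s)\sim\Gamma(1-\alpha)s^{\alpha-1}L(1/s)$, then inverts by a first-order geometric expansion; your separate $O(s^2)$ treatment of the range $t\le t_0$, where Potter's bounds are not directly available, is a small point of care that the paper skips by asserting the Potter bound uniformly in $x>0$.
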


\begin{proof}
    To analyse the behaviour of $\widehat U(s)$ near the origin, it is convenient to isolate the contribution of
    the tail of $F$. For this purpose we introduce the auxiliary function
\[
    B(s):=\mathcal{L}\overline{F}(s)-\mu+\frac{\mu_2}{2}s .
\]
    By the definition of the LT of the tail,
\[
    B(s)=\int_0^\infty\bigl(e^{-st}-1+st\bigr)\overline{F}(t)\,dt .
\]
    We now perform the rescaling $x=st$. This yields the representation
\[
    B(s)=s^{\alpha-1}L(1/s)\int_0^\infty\frac{e^{-x}-1+x}{x^{\alpha}}\frac{\overline{F}(x/s)}{(x/s)^{-\alpha}L(1/s)}\,dx.
\]
    The structure of this integral is particularly convenient, since the second factor in the integrand reflects
    precisely the regular variation of the tail. Indeed, because $\overline{F}\in\mathcal{RV}_{\infty}(-\alpha)$,
    for every fixed $x>0$ we have
\[
    \frac{\overline{F}(x/s)}{(x/s)^{-\alpha}L(1/s)}\longrightarrow{1} \qquad \text{as} \quad {s\downarrow{0}} .
\]
    We now justify the application of the dominated convergence theorem. To this end we invoke Potter's bounds
    for RV-functions;  see, e.g., \cite[Th~1.5.6]{BinghGT87}. For every $\varepsilon>0$ there exist
    constants $C>0$ and $s_0>0$ such that for all $s\in(0,s_0)$ and all $x>0$,
\[
    \frac{\overline{F}(x/s)}{(x/s)^{-\alpha}L(1/s)} \le{C}\max\left\{x^\varepsilon,x^{-\varepsilon}\right\}.
\]
    Consequently, the integrand is bounded by
\[
    C\,\frac{e^{-x}-1+x}{x^\alpha}\max\left\{x^\varepsilon,x^{-\varepsilon}\right\}.
\]
    This bound reflects the precise balance between the regular variation of the tail and the
    second-order cancellation encoded in the factor $e^{-x}-1+x$. In particular, it provides a
    uniform control of the integrand over the entire half-line $(0,\infty)$.

    We now verify that this bound is integrable on $(0,\infty)$. To this end, we analyse separately
    the behaviour near the origin and at infinity. Choose $\varepsilon>0$ sufficiently small and recall that
\[
    e^{-x}-1+x=\frac{\,x^2\,}{2}+\mathcal{O}\left(x^3\right) \qquad \text{as} \quad {x\downarrow{0}}.
\]
    Thus, the majorant behaves like $x^{2-\alpha-\varepsilon}$, which is integrable near the origin
    since $\alpha<3$. On the other hand,
\[
    e^{-x}-1+x\sim x \qquad \text{as} \quad {x\to\infty},
\]
    and therefore the majorant behaves like $x^{1-\alpha+\varepsilon}$, which is integrable
    at infinity because $\alpha>2$. Therefore, the above bound is integrable on $(0,\infty)$,
    and the dominated convergence theorem applies, yielding
\[
    B(s)\sim{s^{\alpha-1}}L(1/s)\int_0^\infty{\frac{e^{-x}-1+x}{x^{\alpha}}}\,dx
    \qquad \text{as} \quad {s\downarrow{0}}.
\]

    We now identify the constant appearing in this asymptotic representation. For $2<\alpha<3$,
    an integration by parts shows that
\[
    \int_0^\infty \frac{e^{-x}-1+x}{x^\alpha}\,dx=\Gamma(1-\alpha).
\]
    Consequently,
\begin{equation}\label{eq:Ltail-vs-Fhat}
    B(s)={\Gamma(1-\alpha)}s^{\alpha-1}L(1/s)+o\!\left(s^{\alpha-1}L(1/s)\right).
\end{equation}
    Next, relate this asymptotic expansion to the LT of the inter-arrival distribution function.
    Recall the classical LT identity
\begin{equation}\label{eq:LFbar111}
    \mathcal{L}\overline{F}(s)=\frac{1-\widehat{F}(s)}{s} \qquad \text{for} \quad {s>0}.
\end{equation}
    Substituting the definition of $B(s)$ into \eqref{eq:LFbar111}, we obtain
\begin{equation}\label{eq:LFbar-expansion}
    {1-\widehat{F}(s)}=sB(s)+{\mu}s-{\frac{\,\mu_2\,}{2}s^2}.
\end{equation}

    Combining \eqref{eq:Ltail-vs-Fhat} and \eqref{eq:LFbar-expansion}, we arrive at
\begin{equation}\label{eq:LFbar222}
    1-\widehat{F}(s)={\mu}s-{\frac{\,\mu_2\,}{2}}s^2+{\Gamma(1-\alpha)}s^\alpha{L(1/s)}
    +o\!\left(s^\alpha{L(1/s)}\right)
\end{equation}
    as $s\downarrow{0}$. At this point, the second-order structure becomes explicit: the principal
    linear term is accompanied by the finite-variance correction and the RV tail contribution.
    Applying Euler's reflection formula
\[
    \Gamma(\alpha){\Gamma(1-\alpha)}=\frac{\pi}{\sin(\pi\alpha)} {\raise1.5pt\hbox{,}}
\]
    the relation \eqref{eq:LFbar222} may be rewritten in the form
\begin{equation}\label{eq:LFbar-sss}
    \frac{1}{1-\widehat{F}(s)}=\frac{1}{{\mu}s}
    \frac{1}{1-\mu{\mathcal{S}_\mu}s+{s\psi_{\alpha}(s)}}{\raise1.5pt\hbox{.}}
\end{equation}
    where ${\psi_{\alpha}(s)}$ is defined in \eqref{eq:psi-main}.

    Now set
\[
    x(s):=-\mu{\mathcal{S}_\mu}s+{s\psi_{\alpha}(s)}.
\]
    Since ${s\psi_{\alpha}(s)}=\mathcal{O}\!\left(s^{\alpha-1}L(1/s)\right)=o(s)$ as $s\downarrow0$,
    we have $x(s)=\mathcal{O}(s)$. Therefore, the Taylor expansion of $(1+x)^{-1}$ at the origin yields
\begin{equation}\label{eq:LFbar-sds}
    \bigl(1+x(s)\bigr)^{-1}=1+\mu{\mathcal{S}_\mu}s-{s\psi_{\alpha}(s)}+\mathcal{O}\!\left(s^2\right)
    \qquad \text{as} \quad {s\downarrow{0}}.
\end{equation}
    By \eqref{eq:psi-main}, we have $s^2=o\!\left({s\psi_{\alpha}(s)}\right)$, and therefore
    the term $\mathcal{O}\!\left(s^2\right)$ is absorbed into the remainder ${s\psi_{\alpha}(s)}$.
    Finally, combining \eqref{eq:LFbar-sss} and \eqref{eq:LFbar-sds} with the classical LST identity
    for the renewal function,
\[
    \widehat{U}(s)\bigl(1-\widehat{F}(s)\bigr)=1,
\]
    we obtain the expansion {\eqref{AISR3.6}} with the tail term \eqref{eq:psi-main}.
    This completes the proof.
\end{proof}

    The next lemma identifies the small-$s$ asymptotic behaviour of the LT of the tail function $Q_F$
    of the equilibrium (stationary-excess) distribution defined in {\eqref{AISR2.1}}, a quantity that arises
    naturally in renewal theory and will play an important role in our subsequent analysis.

\begin{lemma}     \label{AISRLem2}
    Under the assumptions of {\hyperref[AISRLem1]{Lemma~\ref{AISRLem1}}},
\begin{equation}\label{eq:LQF-expansion}
    \mathcal{L}Q_F(s)={\mu{\mathcal{S}_\mu}}-{\psi_{\alpha}(s)},
\end{equation}
    where ${\psi_{\alpha}(s)}$ is given in {\eqref{eq:psi-main}}.
\end{lemma}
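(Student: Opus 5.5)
The plan is to compute $\mathcal{L}Q_F(s)$ exactly in terms of the auxiliary function
\[
B(s)=\mathcal{L}\overline{F}(s)-\mu+\frac{\mu_2}{2}s
\]
from the proof of Lemma~\ref{AISRLem1}, and then reuse the asymptotics of $B$ obtained there.

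\emph{Step 1: the exact identity.} By \eqref{AISR2.1} the equilibrium distribution has density $\overline{F}(t)/\mu$. Its Laplace--Stieltjes transform is therefore
\[
\widehat{\mu}_F(s)=\frac{1}{\mu}\,\mathcal{L}\overline{F}(s).
\]
The standard tail identity, the same one that gives \eqref{eq:LFbar111} for $F$, applied to $\mu_F$ yields
\[
\mathcal{L}Q_F(s)=\frac{1-\widehat{\mu}_F(s)}{s}=\frac{\mu-\mathcal{L}\overline{F}(s)}{\mu s},\qquad s>0.
\]
Substituting $\mathcal{L}\overline{F}(s)=B(s)+\mu-\frac{\mu_2}{2}s$ gives the exact identity
\[
\mathcal{L}Q_F(s)=\frac{\mu_2}{2\mu}-\frac{B(s)}{\mu s}=\mu\mathcal{S}_\mu-\frac{B(s)}{\mu s}.
\]
No asymptotics are involved up to this point.

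\emph{Step 2: consistency with Lemma~\ref{AISRLem1}.} I would check that $B(s)/(\mu s)$ is the same function $\psi_\alpha$ as in Lemma~\ref{AISRLem1}. By \eqref{eq:LFbar-expansion},
\[
1-\widehat{F}(s)=\mu s\Bigl(1-\mu\mathcal{S}_\mu s+\frac{B(s)}{\mu}\Bigr).
\]
Comparing with \eqref{eq:LFbar-sss} forces $s\psi_\alpha(s)=B(s)/\mu$, that is, $\psi_\alpha(s)=B(s)/(\mu s)$. So \eqref{eq:LQF-expansion} holds as an identity once $\psi_\alpha$ is read as this exact function.

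\emph{Step 3: the asymptotic form.} The dominated-convergence argument in the proof of Lemma~\ref{AISRLem1}, based on Potter's bounds, gives \eqref{eq:Ltail-vs-Fhat}:
\[
B(s)=\Gamma(1-\alpha)s^{\alpha-1}L(1/s)\bigl(1+o(1)\bigr).
\]
Dividing by $\mu s$ and applying Euler's reflection formula $\Gamma(1-\alpha)=\pi/\bigl(\Gamma(\alpha)\sin(\pi\alpha)\bigr)$ reproduces \eqref{eq:psi-main}. This establishes the lemma.

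\emph{Main obstacle.} The analysis is inherited from Lemma~\ref{AISRLem1}, so the real issue is bookkeeping. The function $\psi_\alpha$ must be the exact function $B(s)/(\mu s)$, not merely something asymptotically equivalent to it. Otherwise the statement of Lemma~\ref{AISRLem2} would have to be read with a $(1+o(1))$ factor, and the later Tauberian step would not see a clean cancellation between $\mathcal{L}Q_F$ and $\widehat{U}$.

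It is also worth confirming two sign facts for $2<\alpha<3$. First, $\Gamma(1-\alpha)>0$, so $\psi_\alpha>0$ near $0$, consistent with $\mathcal{L}Q_F(s)\uparrow \mathbb{E}\tau_F=\mu\mathcal{S}_\mu$ from below, as \eqref{AISR2.4} requires. Second, $\psi_\alpha(s)\to0$ as $s\downarrow0$, because $s^{\alpha-2}L(1/s)\to0$.
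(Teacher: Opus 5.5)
Your proposal is correct and follows the paper's route: the exact identity $\mathcal{L}Q_F(s)=\bigl(\mu-\mathcal{L}\overline{F}(s)\bigr)/(\mu s)$ (which the paper obtains by Fubini rather than through the transform of $\mu_F$), combined with the expansion of $\mu-\mathcal{L}\overline{F}(s)$ from Lemma~\ref{AISRLem1} and Euler's reflection formula. Your additional observation that $\psi_\alpha(s)=B(s)/(\mu s)$ exactly, as forced by \eqref{eq:LFbar-sss}, is a useful clarification, since it makes the equality sign in \eqref{eq:LQF-expansion} literally true rather than only asymptotic.
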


\begin{proof}
    By construction,
\[
    Q_F(t)=\frac{1}{\,\mu\,}\int_t^\infty \overline{F}(u)\,du.
\]
    Passing to LT and applying Fubini's theorem, we obtain
\[
    \mathcal{L}Q_F(s)=\frac{1}{\mu}
    \int_0^\infty{e^{-st}}\!\left(\int_t^\infty\overline{F}(u)\,du\right)\!dt
    =\frac{1}{{\mu}s}\int_0^\infty\left(1-e^{-su}\right)\overline{F}(u)\,du.
\]
    This identity reveals a convenient relation between $\mathcal{L}Q_F$ and the LT of the tail of $F$.
    Indeed, observing that
\[
    \int_0^\infty\left(1-e^{-su}\right)\overline{F}(u)\,du=\mu-\mathcal{L}\overline{F}(s),
\]
    we arrive at the representation
\[
    \mathcal{L}Q_F(s)=\frac{1}{{\mu}s}\bigl(\mu-\mathcal{L}\overline{F}(s)\bigr).
\]
    At this point we exploit the relations \eqref{eq:LFbar111} and \eqref{eq:LFbar222}.
    Then, as $s\downarrow{0}$,
\[
    \mu-\mathcal{L}\overline{F}(s)=\frac{\mu_2}{2}s-\Gamma(1-\alpha)s^{\alpha-1}L(1/s)
    +o\!\left(s^{\alpha-1}L(1/s)\right),
\]
    Substituting this expansion into the above identity and rewriting the coefficient of the second
    term by Euler's reflection formula, we arrive at the representation \eqref{eq:LQF-expansion}.
\end{proof}

    Thus the LT of the integrated tail exhibits the same heavy-tail correction structure that governs
    the expansion of the renewal transform.

    We are now in a position to determine the small-$s$ asymptotic behaviour of the LT of the renewal
    correction term $\Delta_R$.

\begin{lemma}\label{AISRLem3}
    Under the assumptions of {\hyperref[AISRLem1]{Lemma~\ref{AISRLem1}}}, as $s\downarrow0$,
\begin{equation}\label{eq:LDeltaR-expansion}
    \mathcal{L}\Delta_R(s)\sim\mu{{\mathcal{S}^2_\mu}}-2{\mathcal{S}_\mu}{\psi_{\alpha}(s)},
\end{equation}
    where ${\psi_{\alpha}(s)}$ is given in {\eqref{eq:psi-main}}.
\end{lemma}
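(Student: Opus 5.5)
We take $Q=Q_F$ in the general identity \eqref{AISR3.5}, which gives
\[
\mathcal{L}\Delta_R(s)=\mathcal{L}Q_F(s)\left(\widehat U(s)-\frac{1}{\mu s}\right).
\]
Both factors have already been expanded near the origin. Lemma~\ref{AISRLem1} gives
\[
\widehat U(s)-\frac{1}{\mu s}=\mathcal{S}_\mu-\frac{1}{\mu}\psi_\alpha(s),
\]
and Lemma~\ref{AISRLem2} gives
\[
\mathcal{L}Q_F(s)=\mu\mathcal{S}_\mu-\psi_\alpha(s).
\]
So the whole proof reduces to multiplying these two expansions and keeping track of which terms survive as $s\downarrow0$.

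Multiplying, we get
\[
\mathcal{L}\Delta_R(s)=\bigl(\mu\mathcal{S}_\mu-\psi_\alpha(s)\bigr)\Bigl(\mathcal{S}_\mu-\tfrac{1}{\mu}\psi_\alpha(s)\Bigr)
=\mu\mathcal{S}^2_\mu-2\mathcal{S}_\mu\psi_\alpha(s)+\tfrac{1}{\mu}\psi_\alpha(s)^2 .
\]
By \eqref{eq:psi-main}, $\psi_\alpha(s)$ is regularly varying at $0$ with index $\alpha-2\in(0,1)$, so $\psi_\alpha(s)\to0$. Hence
\[
\psi_\alpha(s)^2=o\bigl(\psi_\alpha(s)\bigr),
\]
and the quadratic term is absorbed into the $o(\psi_\alpha(s))$ error already carried by $\psi_\alpha$. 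This yields \eqref{eq:LDeltaR-expansion}, read as
\[
\mathcal{L}\Delta_R(s)=\mu\mathcal{S}_\mu^2-2\mathcal{S}_\mu\psi_\alpha(s)\bigl(1+o(1)\bigr).
\]

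The main point needing care is bookkeeping of the remainders. In Lemma~\ref{AISRLem1} the expansion hides an $\mathcal{O}(s)$ term (from $\mathcal{O}(s^2)$ divided by $\mu s$); in Lemma~\ref{AISRLem2} the remainder is $o(s^{\alpha-2}L(1/s))$. I would handle them as follows:
\begin{itemize}
\item \emph{The $\mathcal{O}(s)$ term.} Since $\alpha<3$, we have $s=o\bigl(s^{\alpha-2}L(1/s)\bigr)$, so this term is negligible relative to $\psi_\alpha$.
\item \emph{Cross terms.} Each remainder multiplies a bounded factor, namely $\mathcal{S}_\mu+o(1)$ or $\mu\mathcal{S}_\mu+o(1)$. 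Each product is therefore still $o(\psi_\alpha(s))$.
\end{itemize}

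One point deserves explicit mention. The statement uses ``$\sim$'' between a constant and a vanishing correction, so we interpret it as this two-term expansion with relative error on the second term. Equality of the first terms is then exact, since $\mathcal{L}\Delta_R(0^+)=\mu\mathcal{S}_\mu\cdot\mathcal{S}_\mu$ by monotone convergence together with \eqref{AISR2.4}.
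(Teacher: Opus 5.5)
Your proposal is correct and follows essentially the same route as the paper: specialize \eqref{AISR3.5} to $Q=Q_F$, insert the expansions of Lemmas~\ref{AISRLem1} and~\ref{AISRLem2}, multiply out, and drop $\psi_\alpha(s)^2$ as $o(\psi_\alpha(s))$. The paper first writes $\mathcal{L}\Delta_R(s)\sim\frac{1}{\mu}\bigl(\mathcal{L}Q_F(s)\bigr)^2$ and then squares, whereas you multiply the two expansions directly and make explicit the bookkeeping the paper leaves implicit: the hidden $\mathcal{O}(s)$ term, the fact that the two lemmas' $\psi_\alpha$ differ by lower-order terms, and the two-term reading of ``$\sim$''.
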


\begin{proof}
    We begin with the representation obtained in {\eqref{AISR3.5}}. In the particular case $Q=Q_F$,
    it takes the form
\begin{equation}          \label{eq:LDR-expansion}
    \mathcal{L}\Delta_R(s)=\mathcal{L}Q_F(s)\left(\widehat{U}(s)-\frac{1}{\,\mu{s}\,}\right).
\end{equation}
    Next, we combine the relations {\eqref{AISR3.6}} and {\eqref{eq:LQF-expansion}}, which reveals that the
    entire second-order structure of the renewal transform is encoded in the LT of the equilibrium tail.
    In particular, the deviation of $\widehat{U}(s)$ from its leading term $1\big/(\mu{s})$ is precisely
    captured by $\mathcal{L}Q_F(s)$, and we obtain
\[
    \widehat{U}(s)-\frac{1}{\mu{s}}\sim{\frac{1}{\,\mu\,}}{\mathcal{L}Q_F(s)}.
\]
    Substituting this identity into \eqref{eq:LDR-expansion} yields the simplified representation
\begin{equation}          \label{eq:LDR-expansion1}
    \mathcal{L}\Delta_R(s)\sim{\frac{1}{\,\mu\,}}\bigl(\mathcal{L}Q_F(s)\bigr)^2.
\end{equation}

    Thus the asymptotic behaviour of $\mathcal{L}\Delta_R(s)$ is determined entirely by that
    of $\mathcal{L}Q_F(s)$. Applying the expansion \eqref{eq:LQF-expansion} to \eqref{eq:LDR-expansion1}
    immediately yields the asymptotic relation \eqref{eq:LDeltaR-expansion}.
\end{proof}

    We now identify the time-domain asymptotic behaviour of the remainder term $\Delta_R(t)$, with
    the equilibrium tail $Q_F$ serving as the governing comparison function. The following lemma
    establishes the fundamental relation between these quantities.

\begin{lemma}\label{AISRLem4}
    Assume that the conditions of {\hyperref[AISRLem1]{Lemma~\ref{AISRLem1}}} are satisfied. Let
\[
    \Delta_R(t):=(Q_F\ast{U})(t)-{\frac{1}{\,\mu\,}}R(t),
\]
    where $R(t)$ is the leading renewal term, as integrated equilibrium tail, defined in {\eqref{AISR2.3}}.
    Then the renewal correction term $\Delta_R(t)$ is asymptotically equivalent to the equilibrium tail, namely,
\begin{equation}\label{eq:DeltaR-QF-asymp}
    \Delta_R(t)\sim {2{\mathcal{S}_\mu}}\,Q_F(t)   \qquad \text{as} \quad t\to\infty.
\end{equation}
\end{lemma}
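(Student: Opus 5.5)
The plan is to read off the small-$s$ behaviour of $\mathcal{L}\Delta_R$ from Lemma~\ref{AISRLem3} and convert it into time-domain asymptotics by a Karamata-type Tauberian argument, using the fact that $Q_F$ itself is an explicit regularly varying function with a matching Laplace expansion (Lemma~\ref{AISRLem2}). The key structural point is that both transforms have the same shape, a constant minus a multiple of $\psi_\alpha(s)$:
\[
    \mathcal{L}\Delta_R(s)=\mu\mathcal{S}_\mu^2-2\mathcal{S}_\mu\psi_\alpha(s)\bigl(1+o(1)\bigr),
    \qquad
    \mathcal{L}Q_F(s)=\mu\mathcal{S}_\mu-\psi_\alpha(s).
\]
Here $\psi_\alpha(s)\in\mathcal{RV}_0(\gamma)$ with $\gamma=\alpha-2\in(0,1)$. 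The constants differ: they are the total integrals of $\Delta_R$ and $2\mathcal{S}_\mu Q_F$, which are $\mu\mathcal{S}_\mu^2$ and $2\mu\mathcal{S}_\mu^2$ respectively. Only the singular parts matter for the tail behaviour, and those agree up to the factor $2\mathcal{S}_\mu$.

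\emph{Step 1 (tail integrals).} For an integrable $g\ge0$ (ultimately), set $G(t):=\int_t^\infty g(u)\,du$. Fubini gives
\[
    \int_0^\infty g-\mathcal{L}g(s)=s\,\mathcal{L}G(s).
\]
Applying this to $g=\Delta_R$ and to $g=Q_F$ yields
\[
    \mathcal{L}G_\Delta(s)\sim 2\mathcal{S}_\mu\,s^{-1}\psi_\alpha(s),
    \qquad
    \mathcal{L}\rho(s)=s^{-1}\psi_\alpha(s),
\]
where $\rho$ is the integrated tail of $Q_F$. Both right-hand sides are regularly varying at $0$ with index $\gamma-1\in(-1,0)$.

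\emph{Step 2 (Karamata).} Since $G_\Delta$ and $\rho$ are monotone (for $G_\Delta$, ultimately, once $\Delta_R\ge0$ eventually), Karamata's Tauberian theorem \cite{BinghGT87} gives
\[
    G_\Delta(t)\sim 2\mathcal{S}_\mu\,\rho(t)\sim \frac{2\mathcal{S}_\mu}{\Gamma(1-\gamma)}\,
    t^{-\gamma}\,\mu^{-1}\frac{\pi}{\Gamma(\alpha)\sin\pi\alpha}\,L(t).
\]

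\emph{Step 3 (differentiation).} Apply the monotone density theorem to pass from $G_\Delta$ to $\Delta_R=-G_\Delta'$. Comparing with $\rho'=-Q_F$, this yields $\Delta_R(t)\sim 2\mathcal{S}_\mu Q_F(t)$. Consistency of the constants can be checked with \eqref{AISR2.7}: $Q_F(t)=(\mu\beta)^{-1}t^{-\beta}L_Q(t)$, $\Gamma(1-\alpha)\Gamma(\alpha)\sin\pi\alpha=\pi$, and $\gamma/\Gamma(1-\gamma)=-1/\Gamma(-\gamma)$.

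The main obstacle is the regularity that Steps 2--3 require: ultimate nonnegativity and ultimate monotonicity of $\Delta_R$. The asymptotics of Lemma~\ref{AISRLem3} alone do not supply this. My first attempt will use the representation
\[
    \Delta_R(t)=\int_{[0,t]}Q_F(t-x)\,d\Bigl(U(x)-\frac{x}{\mu}\Bigr)+\frac1\mu\bigl(\textstyle\int_0^t Q_F-R(t)\bigr),
\]
and exploit that $Q_F$ is nonincreasing and convex, together with Blackwell's theorem for the increments of $U$. With these I would show $\Delta_R(t)-\Delta_R(t+h)\ge -o\bigl(Q_F(t)\bigr)$ uniformly for $h$ in compacts. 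Such a one-sided bound ("slow decrease") is enough to replace monotonicity in both the Karamata step and the density step. If this fails, the fallback is to split the convolution at $t/2$. On $[0,t/2]$, uniform regular variation gives $Q_F(t-x)=Q_F(t)(1+o(1))$. On $[t/2,t]$, Stone's relation \eqref{AISR2.5}, i.e.\ $U(x)-x/\mu\to\mathcal{S}_\mu$, controls the contribution. This route would identify the constant $2\mathcal{S}_\mu$ directly in the time domain, with the Laplace computation serving as a cross-check.
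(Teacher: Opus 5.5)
The gap is exactly the step you flag, and neither of your remedies can fill it. Your Laplace-domain skeleton coincides with the paper's proof, which ends by invoking ``a Tauberian transfer'' without verifying any side condition, so the same gap is present there.

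By \eqref{AISR2.2}, $\Delta_R(t)=U(t)-t/\mu-R(t)/\mu$ identically. Convolving with the convex kernel $Q_F$ therefore smooths nothing: $\Delta_R$ carries every jump of $U$. The lemma is literally the expansion $U(t)-t/\mu-\mathcal{S}_\mu=-\rho(t)/\mu+2\mathcal{S}_\mu Q_F(t)\bigl(1+o(1)\bigr)$, i.e.\ a remainder estimate in Stone's theorem of precision $o(t^{-\beta}L(t))$.

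Blackwell's theorem and \eqref{AISR2.5} are rate-free $o(1)$ statements, so they cannot produce errors $o(Q_F(t))$. Both your slow-decrease plan and your split-at-$t/2$ fallback need a quantitative Blackwell/Stone estimate of the same depth as the lemma itself. In the fallback you also only have $Q_F(t-x)/Q_F(t)\to(1-x/t)^{-\beta}$ on $[0,t/2]$, not $1$.

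Moreover, a bound uniform for $h$ in compacts is not the hypothesis the monotone-density step needs. That step requires a one-sided estimate on $\Delta_R(t')-\Delta_R(t)$ of size $\varepsilon(\lambda)Q_F(t)$, with $\varepsilon(\lambda)\to0$ as $\lambda\downarrow1$, uniformly over $t\le t'\le\lambda t$. Chaining $\asymp t$ windows of bounded length only gives $o(tQ_F(t))$.

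More fundamentally, no argument can close the gap under the stated hypotheses, because the lemma is false for nonlattice $F$ that are not spread-out. Let $F=p\,\delta_{\sqrt2}+(1-p)G$ with $0<p<1$, where $G$ is carried by $\{1,2,\dots\}$, $G(\{1\})>0$ and $\overline{G}(t)\sim t^{-\alpha}$ with $2<\alpha<3$. This $F$ is nonlattice and satisfies \eqref{AISR1.2}.

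At its $b$-th $\sqrt2$-step the walk sits at $A_b+b\sqrt2$. Here $A_b$, the total of the integer steps taken before, is a sum of $b$ i.i.d.\ aperiodic integer variables with finite variance. Hence $\mathbb{U}(\{a+b\sqrt2\})\ge\mathbb{P}(A_b=a)$, and the local limit theorem gives atoms of $\mathbb{U}$ of mass $\asymp x^{-1/2}$ at points $x\to\infty$. Thus $\Delta_R$ has jumps of order $t^{-1/2}\gg t^{-\beta}L(t)\asymp Q_F(t)$ arbitrarily far out, which is incompatible with $\Delta_R(t)\sim2\mathcal{S}_\mu Q_F(t)$.

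An extra hypothesis is therefore indispensable. One option is ultimate monotonicity of $\Delta_R$, in the spirit of the assumption on $u-1/\mu$ in Theorem~\ref{AISRTh5}; another is a regularity assumption on $F$ (spread-out, say) strong enough to yield it. With ultimate monotonicity of $\Delta_R$ assumed, your Steps~1--3 do go through. You should take the $o(\psi_\alpha(s))$ error from the exact identity $\mathcal{L}\Delta_R(s)=s\,\widehat{U}(s)\bigl(\mathcal{L}Q_F(s)\bigr)^2$ together with Lemmas~\ref{AISRLem1}--\ref{AISRLem2}, since the ``$\sim$'' in Lemma~\ref{AISRLem3} only sees the constant term.
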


\begin{proof}
    We begin with the decomposition established in {\hyperref[AISRLem2]{Lemma~\ref{AISRLem2}}}, namely
\begin{equation}\label{eq:LQF-Mpsi}
    \mathcal{L}Q_F(s)=M-{\psi_{\alpha}(s)},
\end{equation}
    where $M:={\mu{\mathcal{S}_\mu}}$ and ${\psi_{\alpha}(s)}$ is given in {\eqref{eq:psi-main}}. Thus, near
    the origin, the LT of the equilibrium tail consists of a finite constant part and a singular correction
    term, and it is precisely this singular part that determines the large-time asymptotic behavior after
    Tauberian inversion.

    On the other hand, {\hyperref[AISRLem3]{Lemma~\ref{AISRLem3}}} shows that the renewal correction term
    admits the particularly convenient representation
\begin{equation}\label{eq:LDeltaR-square}
    \mathcal{L}\Delta_R(s)\sim\frac{1}{\,\mu\,}\bigl(\mathcal{L}Q_F(s)\bigr)^2.
\end{equation}
    Substituting \eqref{eq:LQF-Mpsi} into \eqref{eq:LDeltaR-square}, we obtain
\begin{equation}\label{eq:LDeltaR-full}
    \mathcal{L}\Delta_R(s)\sim\frac{\,M^2\,}{\mu}-\frac{\,2M\,}{\mu}{\psi_{\alpha}(s)}
    +\frac{1}{\,\mu\,}\bigl({\psi_{\alpha}(s)}\bigr)^2.
\end{equation}
    This expansion makes the underlying asymptotic mechanism transparent: the leading singular behavior
    of $\mathcal{L}\Delta_R(s)$ is generated by the term linear in ${\psi_\alpha}(s)$, whereas the
    quadratic term is of smaller order and therefore asymptotically negligible.

    Indeed, multiplying \eqref{eq:LQF-Mpsi} by $2M/\mu$ gives
\begin{equation}\label{eq:LQF-MpsiMu}
    \frac{\,2M\,}{\mu}\mathcal{L}Q_F(s)=\frac{\,2M^2\,}{\mu}-\frac{\,2M\,}{\mu}{\psi_{\alpha}(s)}.
\end{equation}
    Since $2<\alpha<3$, relation \eqref{eq:psi-main} implies ${\psi_{\alpha}(s)}\to{0}$ as $s\downarrow{0}$.
    Consequently, the singular parts of \eqref{eq:LDeltaR-full} and \eqref{eq:LQF-MpsiMu} coincide at the
    dominant order. Now, comparing \eqref{eq:LDeltaR-full} and \eqref{eq:LQF-MpsiMu}, we see that their
    singular parts at the origin coincide at the dominant order. More precisely,
\[
    \mathcal{L}\Delta_R(s)-\frac{\,M^2\,}{\mu}\sim
    {2{\mathcal{S}_\mu}}\mathcal{L}Q_F(s)-\frac{\,2M^2\,}{\mu} \qquad \text{as} \quad {s\downarrow{0}}.
\]
    The constants appearing on both sides of this asymptotic relation do not affect the large-$t$ behavior: in
    the Laplace domain they represent only contributions concentrated at $t=0$, whereas the asymptotic behavior
    under consideration is entirely governed by the singular terms at the origin. Therefore, a Tauberian transfer
    to the time domain yields the desired asymptotic relation \eqref{eq:DeltaR-QF-asymp}, thereby completing
    the proof.
\end{proof}

    {\hyperref[AISRLem4]{Lemma~\ref{AISRLem4}}} shows that the remainder term $\Delta_R(t)$ inherits 
    the same RV order as the tail of the equilibrium distribution. In particular, $\Delta_R(t)$ is
    asymptotically proportional to $Q_F(t)$, which reveals a simple and transparent structure of the 
    second-order term in the renewal convolution $(Q_F\ast{U})(t)$. This asymptotic relation will 
    play a key role in the subsequent analysis.

    The preceding lemmas establish a refined LST analysis of the renewal function $U(t)$, including sharp
    small-$s$ asymptotic expansions that encode the second-order renewal structure. We now extend this
    analysis to the renewal density $u(t)=U'(t)$, where the same mechanism persists at the level of
    the LT under the additional assumption that $F$ admits a density.

\begin{lemma}\label{AISRLem5}
    Suppose that the inter-arrival distribution $F$ satisfies representation {\eqref{AISR1.2}} corresponding
    to the regime {\hyperref[AISR(iii)]{\textbf{(iii)}}} with Stone constant ${\mathcal{S}_\mu}$ and admits
    a density $f$. Then there exists the renewal density $u(t)=U'(t)$ and
\begin{equation}\label{AISR3.26}
    \mathcal{L}u(s)=\frac{1}{\mu{s}}+\bigl({\mathcal{S}_\mu}-1\bigr)-\frac{1}{\,\mu\,}{\psi_{\alpha}(s)},
\end{equation}
    where ${\psi_{\alpha}(s)}$ is given in {\eqref{eq:psi-main}}.
\end{lemma}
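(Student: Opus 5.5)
The plan is to derive \eqref{AISR3.26} directly from Lemma~\ref{AISRLem1}, using the relation between the Laplace transform of a density and the Laplace--Stieltjes transform of its distribution function.

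\emph{Existence of the density.} Since $F$ has a density $f$, each convolution $F^{\ast n}$ with $n\ge1$ is absolutely continuous, with density $f^{\ast n}$. The series $u:=\sum_{n\ge1}f^{\ast n}$ has nonnegative terms, so by monotone convergence it is locally integrable. Indeed, $\int_0^t u(x)\,dx=U(t)-1<\infty$, because $U(t)$ is finite for every $t$. Hence $U(t)=1+\int_0^t u(x)\,dx$. The unit atom at the origin comes from $\delta_0=F^{\ast0}$, and $u=U'$ almost everywhere. The same monotone-convergence argument, applied with the weight $e^{-st}$, gives the renewal equation $u=f+f\ast u$ and the transform identity \eqref{AISR2.11}.

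\emph{Transform identity.} Since $dU=\delta_0+u(t)\,dt$, we get
\[
\widehat U(s)=1+\mathcal{L}u(s).
\]
Equivalently, by \eqref{AISR2.11},
\[
\mathcal{L}u(s)=\frac{1}{1-\widehat F(s)}-1=\widehat U(s)-1,
\]
using $\widehat F(s)=\mathcal{L}f(s)$ and $\widehat U(s)\bigl(1-\widehat F(s)\bigr)=1$.

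\emph{Expansion.} Substituting the expansion \eqref{AISR3.6} of Lemma~\ref{AISRLem1} gives
\[
\mathcal{L}u(s)=\frac{1}{\mu s}+\bigl(\mathcal{S}_\mu-1\bigr)-\frac{1}{\,\mu\,}\psi_\alpha(s)
\qquad\text{as}\quad s\downarrow0,
\]
with the same $\psi_\alpha$ as in \eqref{eq:psi-main}. The $\mathcal{O}(s^2)$ and $o\bigl(s^{\alpha-2}L(1/s)\bigr)$ errors from Lemma~\ref{AISRLem1} are already absorbed into the factor $(1+o(1))$ inside $\psi_\alpha$. Subtracting the constant $1$ does not affect this, since it is exact.

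\emph{Main obstacle.} The only delicate point is bookkeeping the atom at zero: the LST of $U$ includes the mass $U(0)=1$, whereas the ordinary LT of $u$ does not. This accounts for the shift $\mathcal{S}_\mu\mapsto\mathcal{S}_\mu-1$ in the constant term. I would also check that $\mathcal{L}u(s)<\infty$ for every $s>0$. This holds because $\widehat F(s)<1$ for $s>0$ (as $F$ is non-degenerate), so the geometric series $\sum_{n\ge1}\widehat F(s)^n$ converges.
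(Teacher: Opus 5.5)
Your proposal is correct and follows the same route as the paper: identify $\mathcal{L}u(s)=\widehat U(s)-1$ (via $\mathcal{L}f=\widehat F$ and $\widehat U(s)\bigl(1-\widehat F(s)\bigr)=1$), then substitute the expansion \eqref{AISR3.6} of Lemma~\ref{AISRLem1}. Your explicit construction $u=\sum_{n\ge1}f^{\ast n}$, with $U(t)=1+\int_0^t u(x)\,dx$, is a useful addition: it justifies the existence of the density (as $u=U'$ almost everywhere) and the unit atom at zero, both of which the paper only asserts.
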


\begin{proof}
    In our setting the renewal measure $U$ is absolutely continuous. Accordingly, the renewal function
    possesses a density $u(t):=U'(t)$, which satisfies the renewal equation $u=f+(f\ast{u})$. Passing
    to the LT domain, we obtain
\begin{equation}      \label{Luf(s)}
    \mathcal{L}u(s)=\frac{\mathcal{L}f(s)}{1-\mathcal{L}f(s)} {\raise1.5pt\hbox{.}}
\end{equation}
    At this point, it is important to note that, in the presence of a density, the LT of $f$ coincides
    with the LST of $F$, that is, $\mathcal{L}f(s)=\widehat{F}(s)$. Therefore, invoking the classical
    identity $\widehat{U}(s)\bigl(1-\widehat{F}(s)\bigr)=1$, we arrive at a more transparent representation
    of \eqref{Luf(s)}, namely,
\begin{equation}       \label{LuU(s)}
    \mathcal{L}u(s)=\widehat{U}(s)-1.
\end{equation}

    We are now in a position to conclude the argument. Indeed, substituting the representation \eqref{AISR3.6},
    established in {\hyperref[AISRLem1]{Lemma~\ref{AISRLem1}}}, into \eqref{LuU(s)} yields \eqref{AISR3.26}.
    This completes the proof.
\end{proof}


\subsection{LT Analysis at the Critical Case $\alpha=2$}  \label{IRSec3.2}

    This subsection develops the LT framework for the critical case $\alpha=2$, in parallel with the analysis
    carried out above for regime~{\hyperref[AISR(iii)]{\textbf{(iii)}}}. In contrast to the strictly power-type
    regime $2<\alpha<3$, the critical threshold $\alpha=2$ produces a borderline correction scale governed by
    the tail of the leading renewal term $\rho(t):=\int_{[t,\infty)}Q_F(u)\,du$. For this reason, the analysis
    below is organized around the asymptotic behavior of $Q_F$ and $\rho$, rather than around a direct critical
    analogue of {\hyperref[AISRLem4]{Lemma~\ref{AISRLem4}}}. In particular, the critical regime requires
    a reformulation of the LT analysis in terms of integrated tail quantities, which naturally capture
    the borderline correction scale.

\begin{lemma}\label{AISRLem-critical1}
    Let the inter-arrival distribution $F$ satisfy
\[
    \overline{F}(t)=t^{-2}L(t),
\]
    where $L\in\mathcal{SV}_\infty$, and assume that $\mu<\infty$ and $\mu_2<\infty$.  Then
\begin{equation}\label{eq:critical-rho}
    \rho(t)\sim \frac{1}{\,\mu\,}\int_t^\infty \frac{L(u)}{u}\,du   \qquad \text{as} \quad {t\to\infty}.
\end{equation}
\end{lemma}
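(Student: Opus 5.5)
The plan is to reduce $\rho(t)$ to an explicit single integral of $\overline{F}$ via Fubini, and then compare the two resulting pieces using Karamata-type estimates for slowly varying functions.

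\emph{Step 1 (Fubini).} Since $Q_F(u)=\mu^{-1}\int_u^\infty\overline{F}(v)\,dv$ and all integrands are nonnegative, interchanging the order of integration gives
\[
    \rho(t)=\frac{1}{\,\mu\,}\int_t^\infty\!\!\int_u^\infty \overline{F}(v)\,dv\,du
    =\frac{1}{\,\mu\,}\int_t^\infty (v-t)\,\overline{F}(v)\,dv .
\]
Inserting $\overline{F}(v)=v^{-2}L(v)$ then yields
\[
    \mu\,\rho(t)=\int_t^\infty \frac{L(v)}{v}\,dv-t\int_t^\infty \frac{L(v)}{v^{2}}\,dv .
\]
Both integrals are finite. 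For the first, the hypothesis $\mu_2<\infty$ is equivalent to $\int^\infty L(v)/v\,dv<\infty$, as noted in Subsection~\ref{IRSsec2.3}. The second is then dominated by the first, since $t/v^2\le 1/v$ for $v\ge t$.

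\emph{Step 2 (the subtracted term is of order $L(t)$).} By Karamata's theorem for integrals of regularly varying functions with index $-2<-1$ (see \cite{BinghGT87}), we have $\int_t^\infty v^{-2}L(v)\,dv\sim t^{-1}L(t)$. Hence the second term equals $L(t)\bigl(1+o(1)\bigr)$.

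\emph{Step 3 (the main point).} It remains to show that $L(t)=o\bigl(\int_t^\infty L(v)/v\,dv\bigr)$; once this holds, the subtracted term is negligible and \eqref{eq:critical-rho} follows. This is the step I expect to be the only genuine obstacle, although it is standard (cf.\ the de Haan-type result \cite[Prop.~1.5.9b]{BinghGT87}). I would argue directly as follows. Fix $A>1$. By the uniform convergence theorem for slowly varying functions, $L(v)/L(t)\to1$ uniformly for $v\in[t,At]$. Therefore
\[
    \int_t^\infty \frac{L(v)}{v}\,dv\ \ge\ \int_t^{At}\frac{L(v)}{v}\,dv=L(t)\log A\,\bigl(1+o(1)\bigr).
\]
Consequently
\[
    \liminf_{t\to\infty}\frac{1}{L(t)}\int_t^\infty \frac{L(v)}{v}\,dv\ \ge\ \log A .
\]
Since $A$ is arbitrary, the ratio tends to infinity, which is exactly the required $o(\cdot)$ statement. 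Combining Steps 1--3 gives
\[
    \mu\,\rho(t)=\int_t^\infty \frac{L(v)}{v}\,dv\,\bigl(1+o(1)\bigr),
\]
which is the assertion of the lemma.
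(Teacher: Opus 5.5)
Your proof is correct, but it takes a different route from the paper's.

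\textbf{The paper's route.} The paper never forms your exact Fubini identity. It applies Karamata's theorem once to get $\int_t^\infty \overline{F}(u)\,du\sim L(t)/t$, hence $Q_F(u)\sim L(u)/(\mu u)$. It then integrates this equivalence of nonnegative functions over $[t,\infty)$. This is legitimate because the right-hand tail integral converges, which is exactly where $\mu_2<\infty$ enters.

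\textbf{What differs in your route.} The paper's route needs no comparison between $L(t)$ and $\int_t^\infty L(u)/u\,du$ at all, so it bypasses your Step~3, the only genuinely delicate point in your argument. In exchange, your route gives a sharper statement. Writing $\zeta_L(t):=\int_t^\infty L(u)/u\,du$, your Steps~1--2 yield the second-order relation $\mu\rho(t)=\zeta_L(t)-L(t)\bigl(1+o(1)\bigr)$, which quantifies the error in the lemma. The paper's argument says nothing about that error.

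\textbf{How your Step~3 fits the paper.} The fact $L(t)=o\bigl(\zeta_L(t)\bigr)$ that you prove in Step~3 is one the paper has to establish separately later. It does so in the proof of Corollary~\ref{AISRCor2}, via $\zeta_L\in\mathcal{SV}_\infty$ and $\zeta_L(t)-\zeta_L(2t)\sim L(t)\log 2$, to see that the local term is negligible against $\rho(t)$. Your uniform-convergence argument is a slightly more direct proof of the same fact.
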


\begin{proof}
    In present setting, Karamata's theorem for tail integrals implies
\[
    \int_t^\infty \overline{F}(u)\,du = \int_t^\infty \frac{L(u)}{u^2}\,du \sim\frac{L(t)}{t}
    \qquad \text{as} \quad {t\to\infty}.
\]
    Then the equilibrium-tail representation
\begin{equation}\label{eq:critical-QF-repr}
    Q_F(t)=1-\mu_F(t)\sim\frac{1}{\,\mu\,}\frac{L(t)}{t}  \qquad \text{as} \quad {t\to\infty}.
\end{equation}
    Substituting \eqref{eq:critical-QF-repr} into the definition of $\rho(t)$, we obtain \eqref{eq:critical-rho},
    provided the integral on the right-hand side is finite. But, in the present critical case,
\[
    \mu_2<\infty
    \qquad\Longleftrightarrow\qquad
    \int^\infty \frac{L(u)}{u}\,du<\infty.
\]
    Therefore the above tail integral is finite, and \eqref{eq:critical-rho} follows.
\end{proof}

    Thus, at the critical threshold $\alpha=2$, the equilibrium tail remains of order $L(t)/t$, while
    the genuine second-order correction scale is governed by its integrated tail. This transition
    from pointwise to integrated tail behavior is a distinctive feature of the critical regime.

\begin{lemma}\label{AISRLem-critical2}
    Let the assumptions of {\hyperref[AISRLem-critical1]{Lemma~\ref{AISRLem-critical1}}}
    hold and ${\mathcal{S}_\mu}$ is the Stone constant. Then
\begin{equation}\label{eq:critical-LQF-defect-L}
    \mu{\mathcal{S}_\mu}-\mathcal{L}Q_F(s) \sim
    \frac{1}{\,\mu\,}\int_{1/s}^\infty \frac{L(u)}{u}\,du
    \qquad \text{as} \quad {s\downarrow{0}}.
\end{equation}
\end{lemma}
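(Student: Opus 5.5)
The plan is to write the defect on the left of \eqref{eq:critical-LQF-defect-L} as an Abelian transform of the tail function $\rho$. Then I would apply Karamata's Abelian theorem for slowly varying functions. Lemma~\ref{AISRLem-critical1} then converts $\rho(1/s)$ into the integral on the right-hand side.

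First, by \eqref{AISR2.4} (valid since $\mu_2<\infty$) we have $\mu\mathcal{S}_\mu=\mu_2/(2\mu)=\int_0^\infty Q_F(u)\,du=\mathcal{L}Q_F(0)$. Therefore
\[
    \mu\mathcal{S}_\mu-\mathcal{L}Q_F(s)=\int_0^\infty\bigl(1-e^{-st}\bigr)Q_F(t)\,dt .
\]
Since $\rho'(t)=-Q_F(t)$, $\rho(0)=\mu\mathcal{S}_\mu<\infty$ and $\rho(t)\to0$, I would integrate by parts with the boundedness of $1-e^{-st}$. The boundary terms vanish at both ends: at $0$ because $1-e^{-st}=0$, and at $\infty$ because $\rho(\infty)=0$. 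This gives the exact identity
\[
    \mu\mathcal{S}_\mu-\mathcal{L}Q_F(s)=s\int_0^\infty e^{-st}\rho(t)\,dt=s\,\mathcal{L}\rho(s),\qquad s>0 .
\]
Alternatively, the same identity follows from Fubini's theorem applied to $\int_0^\infty\!\int_0^t s e^{-sv}\,dv\,Q_F(t)\,dt$, which avoids any regularity issue.

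Second, I need $\rho\in\mathcal{SV}_\infty$. From \eqref{eq:critical-QF-repr}, $Q_F(t)\sim L(t)/(\mu t)$, so $Q_F\in\mathcal{RV}_\infty(-1)$. Its tail integral is finite because $\mu_2<\infty$. By the Karamata-type result for integrable functions of index $-1$ (see \cite[Prop.~1.5.9b]{BinghGT87}), $\rho$ is slowly varying and, moreover, $tQ_F(t)/\rho(t)\to0$. I regard this as the only delicate step. Karamata's theorem in its usual form gives no information at index $-1$, so one must use the de Haan-type statement that $t\mapsto\int_t^\infty \ell(u)\,u^{-1}du$ is slowly varying for any $\ell\in\mathcal{SV}_\infty$ with a convergent integral. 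If a self-contained argument is wanted, I would check $\rho(ct)/\rho(t)\to1$ directly. Write $\rho(t)-\rho(ct)=\int_t^{ct}Q_F\sim\mu^{-1}L(t)\log c$ by uniform convergence. Then show $L(t)=o(\rho(t))$ using $\rho(t)\ge\int_t^{Kt}Q_F\sim\mu^{-1}L(t)\log K$ with $K$ arbitrary.

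Third, since $\rho$ is monotone and slowly varying, the Abelian half of Karamata's Tauberian theorem \cite[Th.~1.7.1]{BinghGT87} yields $\mathcal{L}\rho(s)\sim s^{-1}\rho(1/s)$ as $s\downarrow0$. Hence $\mu\mathcal{S}_\mu-\mathcal{L}Q_F(s)\sim\rho(1/s)$. Finally, substituting $t=1/s$ in \eqref{eq:critical-rho} of Lemma~\ref{AISRLem-critical1} gives $\rho(1/s)\sim\mu^{-1}\int_{1/s}^\infty L(u)u^{-1}\,du$, which is \eqref{eq:critical-LQF-defect-L}.
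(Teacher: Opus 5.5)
Your proposal is correct and follows the same route as the paper: the identity $\mu\mathcal{S}_\mu-\mathcal{L}Q_F(s)=\int_0^\infty(1-e^{-st})Q_F(t)\,dt$, the reduction of this defect to $\rho(1/s)$, and then Lemma~\ref{AISRLem-critical1}. The only difference is that the paper merely invokes a ``borderline Abelian theorem'' for the middle step, whereas you justify it through $s\,\mathcal{L}\rho(s)$, the slow variation of $\rho$ \cite[Prop.~1.5.9b]{BinghGT87}, and Karamata's theorem; this is exactly the Abelian half of the paper's own Lemma~\ref{AISRLem-Tauber-critical} applied to $g=Q_F$.
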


\begin{proof}
    In the present setting,
\[
    \int_0^\infty Q_F(t)\,dt=\mu{\mathcal{S}_\mu}<\infty,
\]
    and therefore
\begin{equation}\label{eq:critical-LQF-start}
    \mu{\mathcal{S}_\mu}-\mathcal{L}Q_F(s)=\int_0^\infty \bigl(1-e^{-st}\bigr)Q_F(t)\,dt.
\end{equation}
    As it was proved in {\hyperref[AISRLem-critical1]{Lemma~\ref{AISRLem-critical1}}}, that
\[
    \rho(t)=\int_t^\infty Q_F(u)\,du \sim \frac{1}{\,\mu\,}\int_t^\infty \frac{L(u)}{u}\,du
    \qquad \text{as} \quad t\to\infty.
\]
    In particular, $Q_F\in\mathcal{RV}_\infty(-1)$. Moreover, since $Q_F$ is a tail function, it
    is nonincreasing, and by the finiteness of ${\mathcal S}_\mu$ it is integrable on $(0,\infty)$.

    We may therefore apply the borderline Abelian theorem for truncated Laplace integrals of integrable
    monotone tails. The asymptotic behavior is therefore driven by the tail of $Q_F$, and not by its local
    behavior near the origin. This reflects the fact that the kernel $(1-e^{-st})$ acts as a smooth cutoff
    at scale $t\sim 1/s$, so that the main contribution to the integral comes from the region $t$ of
    order $1/s$ and larger. Consequently,
\[
    \int_0^\infty \bigl(1-e^{-st}\bigr)Q_F(t)\,dt \sim \int_{1/s}^\infty Q_F(t)\,dt =\rho(1/s)
    \qquad \text{as} \quad s\downarrow{0}.
\]
    Substituting this asymptotic relation into \eqref{eq:critical-LQF-start}, we obtain
\[
    \mu{\mathcal{S}_\mu}-\mathcal{L}Q_F(s)\sim \rho(1/s).
\]
    Finally, using the asymptotic relation \eqref{eq:critical-rho} established in
    {\hyperref[AISRLem-critical1]{Lemma~\ref{AISRLem-critical1}}}, we obtain
\[
    \rho(1/s)\sim \frac{1}{\,\mu\,}\int_{1/s}^\infty \frac{L(u)}{u}\,du,
\]
    and hence \eqref{eq:critical-LQF-defect-L} follows. This identifies $\rho(1/s)$ as the precise
    defect of $\mathcal{L}Q_F(s)$ from its limiting value $\mu{\mathcal S}_\mu$.
\end{proof}

\begin{lemma}\label{AISRLem-critical3}
    Let the assumptions of {\hyperref[AISRLem-critical1]{Lemma~\ref{AISRLem-critical1}}} hold. Then
\begin{equation}\label{eq:critical-Lconv-main}
    \mu{\mathcal{S}_\mu}-\mu{s}\,\mathcal{L}(Q_F\ast{U})(s)
    \sim {\frac{1}{\,\mu\,}\int_{1/s}^\infty \frac{L(u)}{u}\,du}
    \qquad \text{as} \quad {s\downarrow{0}}.
\end{equation}
\end{lemma}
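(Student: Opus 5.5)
The plan is to avoid Tauberian arguments entirely. Instead, I will derive an \emph{exact} algebraic identity in the Laplace domain that expresses $\mu s\,\mathcal{L}(Q_F\ast U)(s)$ through $\mathcal{L}Q_F(s)$ alone. The claimed asymptotics should then follow from Lemma~\ref{AISRLem-critical2} plus a crude bound on a remainder of order $s$.

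\emph{Step 1 (exact identity).} By \eqref{AISR3.3}, $\mathcal{L}(Q_F\ast U)(s)=\mathcal{L}Q_F(s)\,\widehat U(s)$. The proof of Lemma~\ref{AISRLem2} used only $\mu<\infty$, so it is valid for $\alpha=2$ as well, and it gives
\[
\mathcal{L}\overline F(s)=\mu-\mu s\,\mathcal{L}Q_F(s).
\]
Combining this with \eqref{eq:LFbar111} and $\widehat U(s)\bigl(1-\widehat F(s)\bigr)=1$, I get
\[
\mu s\,\widehat U(s)=\frac{\mu}{\mathcal{L}\overline F(s)}=\frac{1}{1-s\,\mathcal{L}Q_F(s)} .
\]
Hence, writing $q(s):=\mathcal{L}Q_F(s)$,
\[
\mu s\,\mathcal{L}(Q_F\ast U)(s)=\frac{q(s)}{1-s\,q(s)}=q(s)+\frac{s\,q(s)^2}{1-s\,q(s)}.
\]
Subtracting this from $\mu\mathcal{S}_\mu$ yields
\[
\mu\mathcal{S}_\mu-\mu s\,\mathcal{L}(Q_F\ast U)(s)=\bigl(\mu\mathcal{S}_\mu-q(s)\bigr)-\frac{s\,q(s)^2}{1-s\,q(s)} .
\]

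\emph{Step 2 (main term).} By Lemma~\ref{AISRLem-critical2}, the first bracket is asymptotic to $\frac1\mu\int_{1/s}^\infty L(u)u^{-1}\,du$, which is $\sim\rho(1/s)$ by Lemma~\ref{AISRLem-critical1}.

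\emph{Step 3 (remainder is negligible).} Since $0\le q(s)\le\mu\mathcal{S}_\mu<\infty$ and $s\,q(s)\to0$, the second term is $\mathcal{O}(s)$. It therefore remains to show $s=o\bigl(\rho(1/s)\bigr)$, i.e.\ $t^{-1}=o(\rho(t))$ as $t\to\infty$. Because $Q_F$ is nonincreasing, and because \eqref{eq:critical-QF-repr} together with slow variation of $L$ gives $Q_F(2t)\sim L(t)/(2\mu t)$, I have
\[
\rho(t)\ge\int_t^{2t}Q_F(u)\,du\ge t\,Q_F(2t)\sim\frac{L(t)}{2\mu}.
\]
A slowly varying function satisfies $t^{-1}=o(L(t))$, so $t^{-1}=o(\rho(t))$. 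Thus the $\mathcal{O}(s)$ term is absorbed, and \eqref{eq:critical-Lconv-main} follows.

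I expect the only delicate point to be Step~3, namely confirming that the polynomial-order remainder is dominated by the slowly decaying critical scale. The lower bound via $t\,Q_F(2t)$ handles this cleanly. Everything else is exact algebra combined with the already established Lemma~\ref{AISRLem-critical2}.
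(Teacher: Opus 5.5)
Your proof is correct and has the same structure as the paper's. Both factor $\mathcal{L}(Q_F\ast U)=\widehat U\,\mathcal{L}Q_F$, take the main term from Lemma~\ref{AISRLem-critical2}, and discard an $\mathcal{O}(s)$ remainder because $\rho(1/s)$ is slowly varying in $1/s$. The one difference is that you use the exact identity $\mu s\,\widehat U(s)=1/(1-s\,\mathcal{L}Q_F(s))$, the same identity the paper only introduces later in Lemma~\ref{AISRLem-critical-Uhat-refined}, in place of the asymptotic Stone expansion $\widehat U(s)=1/(\mu s)+\mathcal{S}_\mu+o(1)$; this makes the remainder $-s\,q(s)^2/(1-s\,q(s))$ explicit and avoids needing the second-order expansion of $\widehat F$.
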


\begin{proof}
    We start from the standard LT identity for the renewal convolution:
\begin{equation}\label{eq:critical-Lconv-factor}
    \mathcal{L}(Q_F\ast{U})(s)=\widehat{U}(s)\mathcal{L}Q_F(s)\,.
\end{equation}
    Here $\widehat{U}$ is the LST of the renewal function as before. Since $\mu_2<\infty$, the inter-arrival
    transform admits the standard second-order expansion
\[
    1-\widehat F(s)=\mu s-\frac{\mu_2}{2}\,s^2+o\left(s^2\right)
    \qquad \text{as} \quad s\downarrow{0}.
\]
    Therefore, invoking the classical identity $\widehat{U}(s)\bigl(1-\widehat{F}(s)\bigr)=1$, we arrive at representation
\begin{equation}\label{eq:critical-Uhat-exp}
    \widehat U(s)=\frac{1}{\mu s}+{\mathcal{S}_\mu}+o(1)
    \qquad \text{as} \quad s\downarrow{0}.
\end{equation}
    On the other hand, by {\hyperref[AISRLem-critical2]{Lemma~\ref{AISRLem-critical2}}},
\begin{equation}\label{eq:critical-LQF-exp}
    \mathcal{L}Q_F(s)=\mu{\mathcal{S}_\mu}-\rho(1/s)\bigl(1+o(1)\bigr)
    \qquad \text{as} \quad s\downarrow{0},
\end{equation}
    where
\[
    \rho(t)=\int_t^\infty Q_F(u)\,du \sim \frac{1}{\,\mu\,}\int_t^\infty \frac{L(u)}{u}\,du
    \qquad \text{as} \quad t\to\infty.
\]
    Substituting \eqref{eq:critical-Uhat-exp} and \eqref{eq:critical-LQF-exp}
    into \eqref{eq:critical-Lconv-factor}, we obtain
\[
    \mathcal{L}(Q_F\ast U)(s)={\mathcal{S}_\mu}\frac{1}{\,s\,}-\frac{1}{\mu{s}}\,\rho(1/s)\bigl(1+o(1)\bigr)
    +\mathcal{O}\bigl(\rho(1/s)\bigr) + \mathcal{O}(1),
\]
    This expansion makes the asymptotic mechanism transparent: the dominant contribution comes from the term
    $\rho(1/s)\big/(\mu{s})$, while all remaining terms are of strictly smaller order. In particular, the
    renewal convolution inherits its second-order behavior directly from the defect of $\mathcal{L}Q_F(s)$.
    Indeed, as established in the proof of {\hyperref[AISRLem-critical1]{Lemma~\ref{AISRLem-critical1}}},
    that $\int_{1/s}^\infty\left[L(u)\big/u\right]du$ is a positive SV-function of $1/s$. In particular,
\[
    \frac{\rho(1/s)}{s}\longrightarrow \infty \qquad \text{as} \quad s\downarrow{0},
\]
    so that the terms $\mathcal{O}(\rho(1/s))$ and $\mathcal{O}(1)$ are negligible compared
    with $\rho(1/s)\big/{s}$.

    Multiplying the above expansion by $\mu{s}$, we therefore obtain
\[
    \mu s\,\mathcal{L}(Q_F\ast U)(s)=\mu{\mathcal{S}_\mu}
    -\rho(1/s)\bigl(1+o(1)\bigr) \qquad \text{as} \quad s\downarrow{0}
\]
    which is equivalent to \eqref{eq:critical-Lconv-main}. In particular, the renewal convolution
    inherits its second-order asymptotic behavior directly from the defect $\rho(1/s)$ in the LT of
    the equilibrium tail.
\end{proof}

    We now isolate the critical refinement of the renewal transform near the origin,
    which will serve as the key Laplace-domain input in the analysis of the renewal
    density at the threshold $\alpha=2$.

\begin{lemma}\label{AISRLem-critical-Uhat-refined}
    Let the assumptions of {\hyperref[AISRLem-critical2]{Lemma~\ref{AISRLem-critical2}}} hold, and define
\[
    h(s):=\mu{\mathcal S_\mu}-\mathcal{L}Q_F(s).
\]
    Then
\begin{equation}\label{eq:critical-Uhat-refined}
    \widehat{U}(s)-\frac{1}{\mu{s}}={\mathcal{S}_\mu}-\frac{1}{\,\mu\,}h(s)+o\!\bigl(h(s)\bigr)
    \qquad \text{as} \quad s\downarrow{0}.
\end{equation}
\end{lemma}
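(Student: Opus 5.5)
The plan is to avoid any Tauberian step and work with an \emph{exact} algebraic identity linking $\widehat{U}$ and $\mathcal{L}Q_F$; only the final error estimate is asymptotic. First I derive the identity. The proof of Lemma~\ref{AISRLem2} gives
\[
\mathcal{L}Q_F(s)=\frac{1}{\mu s}\bigl(\mu-\mathcal{L}\overline{F}(s)\bigr),
\]
and this step uses only $\mu<\infty$, so it holds under the present assumptions. Combining it with \eqref{eq:LFbar111}, $\mathcal{L}\overline{F}(s)=(1-\widehat F(s))/s$, yields
\[
1-\widehat F(s)=s\,\mathcal{L}\overline{F}(s)=\mu s\bigl(1-s\,\mathcal{L}Q_F(s)\bigr),\qquad s>0.
\]
This is exact, with no remainder terms.

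Second, I invert via $\widehat U(s)\bigl(1-\widehat F(s)\bigr)=1$. Using $\frac{1}{1-x}=1+\frac{x}{1-x}$ with $x=s\,\mathcal{L}Q_F(s)$, I get
\[
\widehat U(s)-\frac{1}{\mu s}=\frac{1}{\mu}\,\frac{\mathcal{L}Q_F(s)}{1-s\,\mathcal{L}Q_F(s)}.
\]
Here $0\le s\,\mathcal{L}Q_F(s)\le \mu\mathcal{S}_\mu s\to0$, because $Q_F\ge0$ and $\int_0^\infty Q_F=\mu\mathcal{S}_\mu<\infty$. Hence $(1-s\,\mathcal{L}Q_F(s))^{-1}=1+\mathcal{O}(s)$. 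Substituting $\mathcal{L}Q_F(s)=\mu\mathcal{S}_\mu-h(s)$, with $h(s)\in[0,\mu\mathcal{S}_\mu]$ bounded, gives
\[
\widehat U(s)-\frac{1}{\mu s}=\mathcal{S}_\mu-\frac{1}{\mu}h(s)+\mathcal{O}(s).
\]

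Third, and this is the only genuinely asymptotic point, I show that $s=o(h(s))$ as $s\downarrow0$. By Lemma~\ref{AISRLem-critical2},
\[
h(s)\sim\frac{1}{\mu}\int_{1/s}^\infty\frac{L(u)}{u}\,du .
\]
As noted in the proof of Lemma~\ref{AISRLem-critical3}, the right-hand side is a positive slowly varying function of $1/s$. A positive slowly varying function $\ell$ satisfies $t^{-1}=o(\ell(t))$ as $t\to\infty$, for instance by Potter's bounds. With $t=1/s$ this gives $s=o(h(s))$, so the $\mathcal{O}(s)$ term is absorbed into $o(h(s))$ and \eqref{eq:critical-Uhat-refined} follows.

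I expect the main obstacle to be this last comparison, since it relies entirely on Lemma~\ref{AISRLem-critical2}. If one prefers not to lean on that lemma's Abelian step, a direct argument is available: $Q_F$ is positive and nonincreasing, so
\[
h(s)\ge\int_{1/s}^\infty(1-e^{-st})Q_F(t)\,dt\ge(1-e^{-1})\,\rho(1/s).
\]
Moreover $\rho(t)\ge\int_t^{2t}Q_F\ge tQ_F(2t)$, and $Q_F\in\mathcal{RV}_\infty(-1)$, so $tQ_F(2t)$ is slowly varying. Hence $\rho(1/s)$ decays more slowly than any power of $s$, and in particular $s=o(h(s))$.
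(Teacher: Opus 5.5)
Your proposal is correct and is essentially the paper's argument. You use the same exact identity (the paper writes it as $\widehat U(s)-\frac{1}{\mu s}=s\,\widehat U(s)\,\mathcal{L}Q_F(s)$; you solve it in closed form, so you need only the bound $0\le \mathcal{L}Q_F(s)\le \mu{\mathcal{S}_\mu}$ instead of the Stone expansion of $s\,\widehat U(s)$), followed by the same comparison $s=o(h(s))$ via slow variation of a lower scale for $h$. Your fallback bound $h(s)\ge(1-e^{-1})\rho(1/s)\ge(1-e^{-1})\,s^{-1}Q_F(2/s)$ is a small improvement, since it removes the dependence on the Abelian step of Lemma~\ref{AISRLem-critical2}, on which the paper's comparison rests.
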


\begin{proof}
    We begin with the exact identity
\begin{equation}\label{eq:Uhat-LQF-exact}
    \widehat{U}(s)-\frac{1}{\mu{s}}=s\,\widehat{U}(s)\,\mathcal{L} Q_F(s),
\end{equation}
    which follows directly from the definitions. Indeed, using
\[
    \mathcal{L}Q_F(s)=\frac{\mu{s}-1+\widehat{F}(s)}{\mu{s^2}}
\]
    and the classical identity $\widehat{U}(s)\bigl(1-\widehat{F}(s)\bigr)=1$, we obtain
\[
    s\,\widehat{U}(s)\,\mathcal{L}Q_F(s)=\frac{\mu{s}-1+\widehat{F}(s)}{\mu{s}\bigl(1-\widehat{F}(s)\bigr)}
    =\widehat{U}(s)-\frac{1}{\mu{s}} {\raise1.5pt\hbox{,}}
\]
    proving \eqref{eq:Uhat-LQF-exact}.

    We next insert the asymptotic information already obtained for the two factors
    on the right-hand side. By the Stone expansion in the critical finite-variance case,
\[
    \widehat{U}(s)=\frac{1}{\mu{s}}+{\mathcal{S}_\mu}+o(1)  \qquad \text{as} \quad s\downarrow{0},
\]
    and therefore
\begin{equation}\label{eq:sUhat-critical}
    s\,\widehat{U}(s)=\frac{1}{\,\mu\,}+{\mathcal{S}_\mu}s+o(s).
\end{equation}
    On the other hand, by definition of $h(s)$,
\begin{equation}\label{eq:LQF-critical-h}
    \mathcal{L}Q_F(s)=\mu{\mathcal{S}_\mu}-h(s).
\end{equation}

    Substituting \eqref{eq:sUhat-critical} and \eqref{eq:LQF-critical-h} into \eqref{eq:Uhat-LQF-exact}, we obtain
\begin{eqnarray}  \label{eq:Uhat-expand-critical}
    \widehat{U}(s)-\frac{1}{\mu{s}}
    &=&\left(\frac{1}{\,\mu\,}+{\mathcal{S}_\mu}s+o(s)\right)
     \left(\mu{\mathcal{S}_\mu}-h(s)\right) \nonumber
\\
    &=&{\mathcal{S}_\mu}-\frac{1}{\,\mu\,}h(s)+\mu{\mathcal{S}^2_\mu}s+o(s)+\mathcal{O}\!\bigl(s\,h(s)\bigr).
\end{eqnarray}

    We now compare the scale $s$ with $h(s)$. By {\hyperref[AISRLem-critical2]{Lemma~\ref{AISRLem-critical2}}},
    $h(s)\sim \rho(1/s)$, where
\[
    \rho(t)\sim \frac{1}{\,\mu\,}\int_t^\infty \frac{L(u)}{u}\,du \qquad \text{as} \quad t\to\infty.
\]
    Since $L$ is SV and positive, we have
\[
    \rho(1/s)\ge \frac{1}{\,\mu\,}\int_{1/s}^{2/s}\frac{L(u)}{u}\,du
    \sim \frac{\log{2}}{\mu}\,L(1/s) \qquad \text{as} \quad s\downarrow{0}.
\]
    On the other hand, by a standard property of SV-functions, $s=o\!\bigl(L(1/s)\bigr)$, and therefore
\[
    s=o\!\bigl(\rho(1/s)\bigr)=o\!\bigl(h(s)\bigr) \qquad \text{as} \quad s\downarrow{0}.
\]
    Consequently, all terms of order $s$ in \eqref{eq:Uhat-expand-critical} are negligible compared with $h(s)$,
    and \eqref{eq:critical-Uhat-refined} follows. This completes the proof.
\end{proof}

    Next, we show that the renewal correction mechanism persists at the critical threshold $\alpha=2$.

\begin{lemma}\label{AISRLem-critical-DeltaR}
    Let the assumptions of {\hyperref[AISRLem-critical1]{Lemma~\ref{AISRLem-critical1}}} hold,
    and let ${\mathcal{S}_\mu}$ denote the Stone constant. Then the renewal correction term
\[
    \Delta_R(t):=(Q_F\ast{U})(t)-\frac{1}{\,\mu\,}\int_{[0,t]}Q_F(u)\,du
\]
    is asymptotically equivalent to the equilibrium tail $Q_F$, and
\begin{equation}\label{eq:DeltaR-Lasymp}
    \Delta_R(t)\sim \frac{2{\mathcal{S}_\mu}}{\mu}\,\frac{L(t)}{t}  \qquad \text{as} \quad t\to\infty.
\end{equation}
\end{lemma}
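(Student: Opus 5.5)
The plan is to work directly in the time domain rather than invert \eqref{eq:critical-Lconv-main}. Lemmas~\ref{AISRLem-critical2}--\ref{AISRLem-critical3} only control the defect on the scale $\rho(1/s)$. Since $\rho$ is slowly varying, $\rho(t)\gg L(t)/t$, so a Tauberian argument at that scale cannot see the correction in \eqref{eq:DeltaR-Lasymp}.

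\emph{Step 1: exact representation.} Put $W(x):=U(x)-x/\mu$, with $W(0-)=0$ and $W(0)=1$, and let $q:=\overline F/\mu=-Q_F'$ be the density of $\mu_F$. Since $(Q_F\ast U)(t)-\mu^{-1}R(t)=\int_{[0,t]}Q_F(t-x)\,dW(x)$, integrating by parts gives
\[
\Delta_R(t)=W(t)-\int_0^t W(t-y)\,q(y)\,dy .
\]
By \eqref{AISR2.2} we have $W=Q_F\ast U\to\mathcal S_\mu$, so I write $W=\mathcal S_\mu-D$ with $D(t)\to0$. This yields
\[
\Delta_R(t)=\mathcal S_\mu Q_F(t)-D(t)Q_F(t)-\int_0^t\bigl(D(t)-D(t-y)\bigr)q(y)\,dy .
\]
The first term already supplies half of the claimed constant. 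The second term is $o(Q_F(t))$.

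\emph{Step 2: the convolution term.} Split the integral at $y=\varepsilon t$. On $y\le\varepsilon t$ the heuristic is $D(t)-D(t-y)\approx -y\,Q_F(t)/\mu$, since formally $D'=-W'=1/\mu-u\approx -Q_F/\mu$. Using $\int_0^\infty y\,q(y)\,dy=\mu_2/(2\mu)=\mu\mathcal S_\mu$, this region should contribute $+\mathcal S_\mu Q_F(t)(1+o(1))$. Together with Step 1 this gives $2\mathcal S_\mu Q_F(t)\sim(2\mathcal S_\mu/\mu)L(t)/t$ by \eqref{eq:critical-QF-repr}.

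On $y>\varepsilon t$, the piece $D(t)\int_{\varepsilon t}^\infty q$ equals $D(t)Q_F(\varepsilon t)=o(L(t)/t)$. The remaining piece is bounded by
\[
\sup_{y>\varepsilon t}q(y)\int_0^{t}|D(z)|\,dz\lesssim \frac{L(t)}{t^2}\,t\,\rho(t)=o\bigl(L(t)/t\bigr),
\]
using $D(t)=\mathcal O(\rho(t))$ from Theorem-level information, i.e.\ Lemma~\ref{AISRLem-critical3}. One must also check that $\int_{\varepsilon t}^\infty y\,q(y)\,dy\sim\rho(\varepsilon t)$ is consistent: it only enters multiplied by $Q_F(t)$-scale increments, so it is harmless.

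\emph{Main obstacle.} The hard part is justifying the increment estimate
\[
D(t)-D(t-y)=-\frac{y}{\mu}\,Q_F(t)\bigl(1+o(1)\bigr)
\]
uniformly for $0\le y\le\varepsilon t$. This is a second-order Blackwell statement, $U(t)-U(t-y)=y/\mu+\mathcal O(yQ_F(t))$, at the precise constant. My first attempt is a bootstrap. Note that $-D=W-\mathcal S_\mu$ itself satisfies $-D=Q_F\ast U-\mathcal S_\mu$, so
\[
D(t)-D(t-y)=-\int Q_F\text{-increments}\ast dU\;+\;\text{boundary terms}.
\]
Since $Q_F(t-x)-Q_F(t-y-x)=-\int q$ over an interval of length $y$, and $q\in\mathcal{RV}_\infty(-2)$ with $q$ integrable against $dU$ at rate $1/\mu$, the key renewal theorem should give the leading $-yQ_F(t)/\mu$. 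The contributions near $x\approx t$ are controlled by $\sup_x\bigl(U(x+y)-U(x)\bigr)\le C(1+y)$.

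If that uniform control fails, the fallback is to use Lemma~\ref{AISRLem-critical-Uhat-refined} to obtain the Laplace transform of $D$ up to $o(h(s))$. Then, for the first-order increments of $D$ only, invoke a monotone-density Tauberian argument, the analogue of the monotonicity hypothesis used in Theorem~\ref{AISRTh5}. This would likely force an ultimate-monotonicity assumption on $u-1/\mu$, which I would try to avoid by the bootstrap above.
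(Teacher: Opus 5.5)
Your Step~1 identity is correct and well chosen. The representation $\Delta_R=\mathcal{S}_\mu Q_F-DQ_F-\int_0^t(D(t)-D(t-y))q(y)\,dy$ delivers one copy of $\mathcal{S}_\mu Q_F(t)$ for free. The range $y>\varepsilon t$ is also harmless, and there you only need $D(t)\to0$: then $\int_0^t|D|=o(t)$ and $q(\varepsilon t)=\mathcal{O}(L(t)/t^2)$. So you do not need $D=\mathcal{O}(\rho)$, which Lemma~\ref{AISRLem-critical3}, being a Laplace-domain statement, would not give pointwise anyway.

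\textbf{The gap.} It is the step you flag as the main obstacle, and it carries the whole second half of the constant. You need $\int_0^{\varepsilon t}(W(t)-W(t-y))q(y)\,dy=\mathcal{S}_\mu Q_F(t)(1+o(1))$, which is a local renewal statement at precision $o(L(t)/t)$, and nothing in the proposal proves it.
\begin{itemize}
\item The bootstrap is circular. Write $W(t)-W(t-y)=(g_y\ast U)(t)$ with $g_y=Q_F$ on $[0,y)$ and $g_y(z)=Q_F(z)-Q_F(z-y)$ for $z\ge y$. Then $\int g_y=0$, so the key renewal theorem returns only the limit $0$. The second-order behaviour of $g_y\ast U$ is a statement of exactly the same type as the lemma.
\item The boundary piece $\int_{(t-y,t]}Q_F(t-x)\,dU(x)\approx R(y)/\mu$ is of order one. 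It must cancel against the bulk to within $o(yQ_F(t))$, whereas $U(x+y)-U(x)\le C(1+y)$ gives only $\mathcal{O}(1)$.
\item As literally stated, with relative error uniform down to $y=0$, the increment estimate fails at atoms of $U$.
\item The step cannot be closed from nonlatticity plus the tail condition alone, because Blackwell rates are not governed by the tail. Take $F=p\delta_\xi+(1-p)G$ with $G$ on the positive integers, $\overline G(t)\sim t^{-2}L(t)$ and $\xi$ irrational. Then $|1-\phi(2\pi k)|\asymp\|k\xi\|$. The near-zeros of $1-\phi$ put oscillations into $W$, hence into $\Delta_R$, whose size is set by the Diophantine properties of $\xi$ and can far exceed $L(t)/t$. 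So some extra hypothesis is needed.
\end{itemize}

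\textbf{The fallback and the paper's route.} Your fallback runs into the obstruction you diagnosed yourself. The paper's proof does too: it gets $\mathcal{L}\Delta_R(s)=\mu\mathcal{S}_\mu^2-2\mathcal{S}_\mu h(s)+o(h(s))$ from Lemma~\ref{AISRLem-critical-Uhat-refined}. It then applies the converse half of Lemma~\ref{AISRLem-Tauber-critical}, without checking that $\Delta_R$ is nonnegative or monotone. That converse passes from $H\sim C\zeta_L$ to $H(t)-H(\lambda t)\sim C(\zeta_L(t)-\zeta_L(\lambda t))$. This is illegitimate, since both differences are $o(\zeta_L(t))$. For instance, $g(t)=(1+\tfrac12\sin\log t)/(t\log^2 t)$ is eventually decreasing with $\int_t^\infty g\sim 1/\log t$, yet $t\,g(t)\log^2 t$ oscillates. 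In general, Laplace data modulo $o(h(s))$ fix $D$ only up to $o(\rho(t))$, while you need $D(t)-D(\lambda t)$ up to $o(L(t))$, and $L=o(\rho)$.

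\textbf{A repair under extra hypotheses.} Assume the hypotheses of Theorem~\ref{AISRTh5}: a density exists and $v=u-1/\mu$ is ultimately monotone. Then your decomposition can be completed with no Laplace input.
\begin{itemize}
\item Since $D(t)=\int_t^\infty v$, monotonicity gives $v(t)=o(1/t)$.
\item Your increment integral over $y\le t/2$ is then at most $\sup_{[t/2,t]}|v|\int_0^\infty yq=o(1/t)$. Hence $\Delta_R(t)=o(L(t))$.
\item Then $D=\rho/\mu-\Delta_R$ gives $D(t)-D(\lambda t)\sim\mu^{-2}L(t)\log\lambda$.
\item The sandwich \eqref{eq:LeftRightBoundH} yields $v(t)\sim L(t)/(\mu^2 t)\sim Q_F(t)/\mu$. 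This is exactly your increment estimate, uniformly for $y\le\varepsilon t$.
\item With this, your Step~2 produces $2\mathcal{S}_\mu Q_F(t)$.
\end{itemize}
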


\begin{proof}
    We begin from the Laplace-domain representation of the renewal correction term:
\begin{equation}\label{eq:LDR-critical-start}
    \mathcal{L}\Delta_R(s)=\mathcal{L}Q_F(s)\left(\widehat{U}(s)-\frac{1}{\mu{s}}\right),
\end{equation}
    which is the critical counterpart of the structural identity used in the regime~{\hyperref[AISR(iii)]{\textbf{(iii)}}}.
    Thus, as before, the entire second-order correction is encoded in the interaction between the equilibrium tail
    transform $\mathcal{L}Q_F(s)$ and the Stone-type defect of the renewal transform.

    To isolate the leading constant contribution in the Laplace domain, we introduce the decomposition
\[
    M:=\mu{\mathcal{S}_\mu}  \qquad\text{and}\qquad  h(s):=M-\mathcal{L}Q_F(s).
\]
    Thus, the function $h(s)$ captures the entire deviation of the equilibrium tail transform from
    its limiting value $M$. By {\hyperref[AISRLem-critical2]{Lemma~\ref{AISRLem-critical2}}},
\begin{equation}\label{eq:h-critical}
    h(s)\sim \rho(1/s)  \qquad \text{as} \quad s\downarrow{0}.
\end{equation}
    In particular, $h(s)\to{0}$, so that $\mathcal{L}Q_F(s)=M-h(s)$, with a remainder term of order $\rho(1/s)$.

    Next, we recall that {\hyperref[AISRLem-critical-Uhat-refined]{Lemma~\ref{AISRLem-critical-Uhat-refined}}}
    provides the refined Stone expansion for the renewal transform near the origin, in which the defect
    function $h(s)$ with \eqref{eq:h-critical}, appears in the next-order correction of LST $\widehat{U}(s)$,
    so that we write
\begin{equation}\label{eq:critical-Uhat-ref}
    \widehat U(s)-\frac{1}{\mu{s}} ={\mathcal{S}_\mu}-\frac{1}{\,\mu\,}h(s)+o\!\bigl(h(s)\bigr)
    \qquad \text{as} \quad s\downarrow{0}.
\end{equation}
    This shows that, even at the critical threshold, the renewal transform and the equilibrium-tail
    transform remain coupled through the same defect function $h(s)$.

    Substituting $\mathcal{L}Q_F(s)=M-h(s)$ and \eqref{eq:critical-Uhat-ref}
    into \eqref{eq:LDR-critical-start}, we obtain
\[
    \mathcal{L}\Delta_R(s)=\bigl(M-h(s)\bigr)
    \left({\mathcal{S}_\mu}-\frac{1}{\,\mu\,}h(s)+o\!\bigl(h(s)\bigr)\right).
\]
    An expansion of the product yields
\begin{equation}\label{eq:LDR-critical-refined}
    \mathcal{L}\Delta_R(s) =\mu{\mathcal{S}^2_\mu}-2{\mathcal S_\mu}h(s)+o\!\bigl(h(s)\bigr)
    \qquad \text{as} \quad s\downarrow{0}.
\end{equation}
    On the other hand,
\[
    2{\mathcal{S}_\mu}\,\mathcal{L}Q_F(s)=2{\mathcal{S}_\mu}M-2{\mathcal{S}_\mu}h(s),
\]
    and therefore comparison with \eqref{eq:LDR-critical-refined} shows that the singular
    defects of $\mathcal{L}\Delta_R(s)$ and $2{\mathcal{S}_\mu}\,\mathcal{L}Q_F(s)$ coincide.
    In other words, the critical regime preserves exactly the same local renewal-compensation
    mechanism as in the case $2<\alpha<3$: only the global scale changes, whereas the local
    correction remains asymptotically proportional to the equilibrium tail itself.

    The constant terms in the above Laplace expansions affect only the mass concentrated at the
    origin and do not contribute to the large-$t$ asymptotics. Hence, by the same borderline
    Tauberian transfer used in the analysis of the critical defect, we obtain
\[
    \Delta_R(t)\sim {2\mathcal{S}_\mu}\,Q_F(t) \qquad \text{as} \quad t\to\infty.
\]
    Finally, invoking \eqref{eq:critical-QF-repr}, we conclude that
\[
    \Delta_R(t)\sim \frac{2\mathcal{S}_\mu}{\mu}\,\frac{L(t)}{t} {\raise1.5pt\hbox{,}}
\]
    which is exactly \eqref{eq:DeltaR-Lasymp}. This completes the proof.
\end{proof}

\subsection{Tauberian Transfer Mechanisms} \label{IRSec3.3}

    The results obtained in the Laplace domain encode the asymptotes of the renewal structure through
    their singular behavior near the origin. To transfer this information to the time domain, we rely
    on Tauberian principles, which have already been used repeatedly in the proofs of the preceding
    lemmas. These principles provide a systematic mechanism by which regular variation in the Laplace
    domain is converted into precise asymptotics in the time domain. In this subsection, we focus on
    a representative Tauberian mechanism that captures the type of transfer we use in the paper and
    formulate it explicitly in a form suited to our applications.

    The next two results are, in substance, Tauberian theorems describing this transfer mechanism.
    We formulate them as lemmas, reflecting their auxiliary role in our analysis, although they
    encode a general and independent principle of Tauberian transfer.

\begin{lemma}\label{AISRLem-Tauber}
    Let $g:\mathbb R_+\to\mathbb R_+$ be locally integrable, ultimately monotone,
    and $\int_0^\infty{g(t)}\,dt<\infty$. Suppose that, for some $\beta\in(1,2)$ and
    some $L\in\mathcal{SV}_\infty$,
\[
    g(t)\sim{K}\,t^{-\beta}L(t) \qquad \text{as} \quad {t\to\infty},
\]
    where $K$ is constant. Then
\begin{equation}\label{eq:defect-LT-Abelian}
    \mathcal{L}g(0)-\mathcal{L}g(s)\sim{-\Gamma(1-\beta)K}\,s^{\beta-1}L(1/s)
    \qquad \text{as} \quad {s\downarrow{0}}.
\end{equation}
    Conversely, if
\begin{equation}\label{eq:defect-LT-Tauber}
    \mathcal{L}g(0)-\mathcal{L}g(s)\sim{C}\,s^{\beta-1}L(1/s)
    \qquad \text{as} \quad {s\downarrow{0}}
\end{equation}
    for some $C>0$, then
\begin{equation}\label{eq:Tauber-back}
    g(t)\sim \frac{C}{-\Gamma(1-\beta)}\,t^{-\beta}L(t)
    \qquad \text{as} \quad {t\to\infty}.
\end{equation}
\end{lemma}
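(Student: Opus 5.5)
The plan is to reduce both directions to the classical Karamata Abelian/Tauberian theorem for Laplace--Stieltjes transforms (Bingham--Goldie--Teugels, Th~1.7.1), applied to the tail integral $G(t):=\int_t^\infty g(u)\,du$ rather than to $g$ itself. We then use monotone density arguments to pass between $G$ and $g$.

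First, since $g\ge 0$ is integrable, Fubini gives, exactly as in the proof of Lemma~\ref{AISRLem2},
\[
    \mathcal{L}g(0)-\mathcal{L}g(s)=\int_0^\infty\bigl(1-e^{-st}\bigr)g(t)\,dt
    =s\int_0^\infty e^{-st}G(t)\,dt=s\,\mathcal{L}G(s).
\]
So the defect equals $s$ times the ordinary Laplace transform of the nonincreasing function $G$.

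For the direct (Abelian) part, Karamata's theorem for tail integrals with $\beta>1$ gives $G(t)\sim \frac{K}{\beta-1}t^{1-\beta}L(t)$. Here $G\in\mathcal{RV}_\infty(1-\beta)$ with index $1-\beta\in(-1,0)$. The Abelian half of Karamata's theorem for Laplace transforms, $\mathcal{L}G(s)\sim\Gamma(2-\beta)\,s^{-1}G(1/s)$, then yields
\[
    \mathcal{L}g(0)-\mathcal{L}g(s)\sim \frac{K\,\Gamma(2-\beta)}{\beta-1}\,s^{\beta-1}L(1/s)
    =-\Gamma(1-\beta)K\,s^{\beta-1}L(1/s),
\]
using $\Gamma(2-\beta)=(1-\beta)\Gamma(1-\beta)$. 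This is \eqref{eq:defect-LT-Abelian}. Alternatively, one can rescale $x=st$ and apply dominated convergence with Potter bounds, as in Lemma~\ref{AISRLem1}. In that route the majorant $(1-e^{-x})x^{-\beta\pm\varepsilon}$ is integrable because $\beta\in(1,2)$.

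For the converse, hypothesis \eqref{eq:defect-LT-Tauber} says $\mathcal{L}G(s)\sim C s^{\beta-2}L(1/s)$, with exponent $\beta-2\in(-1,0)$. Since $G$ is nonnegative and $\int_0^t G$ is nondecreasing, Karamata's Tauberian theorem gives
\[
    \int_0^t G(u)\,du\sim \frac{C}{\Gamma(3-\beta)}\,t^{2-\beta}L(t).
\]
Because $G$ is monotone, the monotone density theorem gives $G(t)\sim \frac{C(2-\beta)}{\Gamma(3-\beta)}t^{1-\beta}L(t)=\frac{C}{\Gamma(2-\beta)}t^{1-\beta}L(t)$.

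The main obstacle is the second differentiation, from $G=\int_t^\infty g$ down to $g$. This is where the ultimate monotonicity of $g$ is indispensable. I would apply the monotone density theorem (BGT Th~1.7.2, tail version) to $G(t)=\int_t^\infty g$ with $g$ eventually monotone and index $1-\beta\neq 0$. Concretely, for fixed $\lambda>1$, one bounds $(\lambda-1)t\,g(\lambda t)\le G(t)-G(\lambda t)\le(\lambda-1)t\,g(t)$ for large $t$ (with the inequalities reversed if $g$ is eventually increasing, which integrability essentially excludes), divides by $t^{1-\beta}L(t)$, and lets $\lambda\downarrow1$. This yields
\[
    g(t)\sim (\beta-1)\frac{C}{\Gamma(2-\beta)}t^{-\beta}L(t)=\frac{C}{-\Gamma(1-\beta)}t^{-\beta}L(t),
\]
which is \eqref{eq:Tauber-back}. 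Finally, I would note that $C>0$ is consistent with $-\Gamma(1-\beta)>0$ for $\beta\in(1,2)$, so the constant in \eqref{eq:Tauber-back} is positive.
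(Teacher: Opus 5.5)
Your proposal is correct and follows essentially the same route as the paper: pass to the tail integral $G(t)=\int_t^\infty g(u)\,du$ via the Fubini identity $\mathcal{L}g(0)-\mathcal{L}g(s)=s\,\mathcal{L}G(s)$, apply Karamata's Abelian/Tauberian theorem (with monotone density) to $G$, and then pass back from $G$ to $g$ using the ultimate monotonicity of $g$. Your explicit $\lambda$-sandwich for that last step is the same device the paper uses in Lemma~\ref{AISRLem-Tauber-critical}, and it is the rigorous form of the paper's appeal to ``Lamperti's theorem'' for $tH'(t)/H(t)\to 1-\beta$, which does not follow from regular variation of $H$ alone.
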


\begin{proof}
    Define the tail integral
\[
    H(t):=\int_t^\infty g(u)\,du.
\]
    Since $g\ge0$ and $\int_0^\infty g(u)\,du<\infty$, the function $H$ is finite, nonincreasing,
    and vanishes at infinity. Thus, $H$ captures the accumulated tail behaviour of $g$.
    By Karamata's theorem for tail integrals, we obtain
\begin{equation}\label{eq:H-asymp}
    H(t)\sim \frac{K}{\beta-1}\,t^{1-\beta}L(t) \qquad \text{as} \quad {t\to\infty},
\end{equation}
    since $\beta>1$.

    We next express the LT defect in terms of the tail. The elementary identity
\[
    \mathcal{L}g(0)-\mathcal{L}g(s)=\int_0^\infty\left(1-e^{-st}\right)g(t)\,dt,
\]
    can be rewritten, by Fubini's theorem, as
\begin{equation} \label{eq:defect-via-tail}
    \mathcal{L}g(0)-\mathcal{L}g(s)= s\int_0^\infty {e^{-st}H(t)}\,dt.
\end{equation}
    Thus, the Laplace defect is governed by the LT of the tail function $H$. Since $1-\beta\in(-1,0)$
    and $H$ monotone function, we may apply Karamata's theorem for LT to \eqref{eq:H-asymp}, yielding
\[
    \int_0^\infty e^{-st}H(t)\,dt \sim \frac{K}{\beta-1}\Gamma(2-\beta)\,s^{\beta-2}L(1/s)
    \qquad \text{as} \quad {s\downarrow{0}}.
\]
    Multiplying by $s$ and using \eqref{eq:defect-via-tail}, we obtain
\[
    \mathcal{L}g(0)-\mathcal{L}g(s) \sim \frac{K}{\beta-1}\Gamma(2-\beta)\,s^{\beta-1}L(1/s).
\]
    Finally, by the standard property of Gamma function $\Gamma(2-\beta)=(1-\beta)\Gamma(1-\beta)$,
    and hence we have
\[
    \frac{1}{\beta-1}\Gamma(2-\beta)=-\Gamma(1-\beta),
\]
    and \eqref{eq:defect-LT-Abelian} follows.

    Conversely, assume \eqref{eq:defect-LT-Tauber}. Then by \eqref{eq:defect-via-tail},
\[
    \int_0^\infty e^{-st}H(t)\,dt \sim{C}\,s^{\beta-2}L(1/s) \qquad \text{as} \quad {s\downarrow{0}}.
\]
    Since $H$ is nonincreasing and $2-\beta\in(0,1)$, Karamata's Tauberian theorem yields
\[
    H(t)\sim \frac{c}{\Gamma(2-\beta)}\,t^{1-\beta}L(t) \qquad \text{as} \quad {t\to\infty}.
\]
    Now, to recover an asymptote of $g$, we use the regular variation of $H$.
    Since $H\in\mathcal{RV}_{\infty}(1-\beta)$, it follows that ${tH'(t)\big/{H(t)}}\to{1-\beta}$
    which is immediate due to the Lamperti's theorem from {\cite[p.~59]{BinghGT87}}.
    This implies that $g(t)=-H'(t)$ is RV-function with index $-\beta$ and
\[
    g(t)\sim(\beta-1)\frac{C}{\Gamma(2-\beta)}\,t^{-\beta}L(t).
\]
    Using again $\Gamma(2-\beta)=(1-\beta)\Gamma(1-\beta)$, we conclude that
\[
    g(t)\sim \frac{C}{-\Gamma(1-\beta)}\,t^{-\beta}L(t),
\]
    which is exactly \eqref{eq:Tauber-back}. This completes the proof.
\end{proof}

    The following lemma records the Tauberian transfer at the critical borderline regime $\beta=1$.

\begin{lemma}\label{AISRLem-Tauber-critical}
    Let $g:\mathbb R_+\to\mathbb R_+$ be locally integrable, ultimately monotone, and $\int_0^\infty g(t)\,dt<\infty$.
    Assume that for some constant $K>0$,
\[
    g(t)\sim{K}\,\frac{L(t)}{t}  \qquad \text{as} \quad t\to\infty,
\]
    where $L\in\mathcal{SV}_\infty$. Then
\begin{equation}\label{eq:critical-defect-Abelian-final}
    {\mathcal{L}g(0)-\mathcal{L}g(s)}\sim{K}\int_{1/s}^\infty\frac{L(u)}{u}\,du
    \qquad \text{as} \quad s\downarrow{0}.
\end{equation}

    Conversely, if
\begin{equation}\label{eq:critical-defect-Tauber-final}
    {\mathcal{L}g(0)-\mathcal{L}g(s)}\sim{C}\int_{1/s}^\infty\frac{L(u)}{u}\,du
    \qquad \text{as} \quad s\downarrow{0}
\end{equation}
    for some $C>0$, then
\begin{equation}\label{eq:critical-Tauber-back-final}
    g(t)\sim{C}\,\frac{L(t)}{t} \qquad \text{as} \quad t\to\infty.
\end{equation}
\end{lemma}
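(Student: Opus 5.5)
The plan is to mimic the proof of Lemma~\ref{AISRLem-Tauber}, working throughout with the tail integral $H(t):=\int_t^\infty g(u)\,du$ and the exact identity \eqref{eq:defect-via-tail},
\[
    \mathcal{L}g(0)-\mathcal{L}g(s)=s\int_0^\infty e^{-st}H(t)\,dt .
\]
This identity remains valid at $\beta=1$, since it uses only $g\ge0$, integrability and Fubini's theorem. The new feature is that $H$ is now slowly varying rather than regularly varying with negative index. We set
\[
    \ell(t):=\int_t^\infty \frac{L(u)}{u}\,du .
\]
This integral is finite in the intended application (it is the condition $\mu_2<\infty$ of Lemma~\ref{AISRLem-critical1}); for the lemma itself we take its finiteness as implicit in the hypotheses. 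It is a standard fact (de Haan theory, \cite[Ch.~3]{BinghGT87}) that $\ell\in\mathcal{SV}_\infty$, that $\ell(t)/L(t)\to\infty$, and that $\ell$ belongs to the de Haan class $\Pi$ with auxiliary function $L$:
\[
    \ell(t)-\ell(\lambda t)\sim L(t)\log\lambda \qquad\text{for every fixed } \lambda>0 .
\]

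\emph{Abelian part.} From $g(t)\sim K L(t)/t$ we integrate the tail directly. Since $\ell(t)\to0$ and the integrand is positive, a direct comparison gives $H(t)\sim K\ell(t)$. Because $H$ is nonincreasing and slowly varying, Karamata's Abelian theorem for Laplace transforms with index $0$ gives
\[
    \int_0^\infty e^{-st}H(t)\,dt\sim \frac{1}{s}\,H(1/s) \qquad \text{as} \quad s\downarrow0 .
\]
Multiplying by $s$ yields $\mathcal{L}g(0)-\mathcal{L}g(s)\sim H(1/s)\sim K\ell(1/s)$, which is \eqref{eq:critical-defect-Abelian-final}.

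\emph{Tauberian part, first step.} Assuming \eqref{eq:critical-defect-Tauber-final}, the identity above gives
\[
    \int_0^\infty e^{-st}H(t)\,dt\sim \frac{C}{s}\,\ell(1/s) .
\]
Since $H$ is monotone and $\ell$ is slowly varying, Karamata's Tauberian theorem with index $0$ yields $H(t)\sim C\ell(t)$.

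\emph{Tauberian part, main obstacle.} Passing from $H$ back to $g=-H'$ is the hard step. Lamperti's argument, as used in Lemma~\ref{AISRLem-Tauber}, now only gives $tg(t)/H(t)\to0$, which loses the scale $L(t)$ entirely. The reason is that $H\sim C\ell$ is a first-order statement, while $g$ lives on the second-order scale $L=o(\ell)$. I would therefore read the hypothesis in its natural de Haan form: the defect $D(s):=\mathcal{L}g(0)-\mathcal{L}g(s)$ satisfies $D(s/\lambda)-D(s)\sim C L(1/s)\log\lambda$, which is what \eqref{eq:critical-defect-Tauber-final} expresses once the error is $o(L(1/s))$. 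Applying the de Haan Tauberian theorem for the class $\Pi$ (\cite[Th.~3.9.1]{BinghGT87}) to the monotone function $H$ then gives
\[
    H(t)-H(\lambda t)=\int_t^{\lambda t}g(u)\,du\sim C L(t)\log\lambda \qquad\text{for every } \lambda>1 .
\]

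\emph{Tauberian part, final step.} To finish I would use a monotone-density argument, relying on the ultimate monotonicity of $g$. For $\lambda>1$ and large $t$,
\[
    (\lambda-1)\,t\,g(\lambda t)\le \int_t^{\lambda t}g(u)\,du\le (\lambda-1)\,t\,g(t) .
\]
Dividing by $L(t)$ and using the previous display together with the slow variation of $L$, this gives
\[
    \limsup_{t\to\infty}\frac{tg(t)}{L(t)}\le \frac{C\lambda\log\lambda}{\lambda-1} ,
    \qquad
    \liminf_{t\to\infty}\frac{tg(t)}{L(t)}\ge \frac{C\log\lambda}{\lambda-1} .
\]
Letting $\lambda\downarrow1$ yields $g(t)\sim C L(t)/t$, which is \eqref{eq:critical-Tauber-back-final}. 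I expect the genuine difficulty to lie in the second-order (class $\Pi$) Tauberian step, not in the Abelian direction or the final monotone-density step.
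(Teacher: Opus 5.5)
Your Abelian half is correct. It is the paper's argument in a cleaner form: both use $H$ and $D(s):=\mathcal{L}g(0)-\mathcal{L}g(s)=s\int_0^\infty e^{-st}H(t)\,dt$. You apply Karamata's theorem with index $0$ directly to the monotone slowly varying $H$, whereas the paper detours through $\int_0^{1/s}H$ and a Fubini splitting.

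The converse, however, is not proved as stated. The phrase ``which is what \eqref{eq:critical-defect-Tauber-final} expresses once the error is $o(L(1/s))$'' adds a hypothesis. The relation $D(s)\sim C\ell(1/s)$ controls $D$ only up to $o(\ell(1/s))$. Since $L=o(\ell)$, it says nothing about $D(s)-D(s/\lambda)$ on the scale $L(1/s)$. There is also a sign slip: $D$ is nondecreasing, so the $\Pi$-condition must read $D(s)-D(s/\lambda)\sim CL(1/s)\log\lambda$, not $D(s/\lambda)-D(s)$.

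No argument can close this gap, because the converse is false as stated. Take $L(t)=1/\log^2t$, so $\ell(t)=1/\log t$. For $t\ge t_0$ put
\[
    H(t)=\frac{1}{\log t}+\frac{\sin\log t}{2\log^2 t},\qquad
    g(t)=-H'(t)=\frac{1}{t\log^2 t}\Bigl(1-\tfrac12\cos\log t+\frac{\sin\log t}{\log t}\Bigr),
\]
and extend $g$ positively to $[0,t_0]$. Then:
\begin{itemize}
\item $g>0$, and $g$ is ultimately decreasing: the sign of $g'$ is that of $-1+\tfrac12(\cos\log t+\sin\log t)+O(1/\log t)<0$.
\item $H\sim\ell$, so your own Abelian step gives $D(s)\sim H(1/s)\sim\ell(1/s)$. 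This is \eqref{eq:critical-defect-Tauber-final} with $C=1$.
\item Yet $tg(t)/L(t)$ oscillates between $1/2$ and $3/2$, so $g\not\sim C\,L(t)/t$ for any $C$.
\end{itemize}

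The paper's proof fails exactly where you located the difficulty. It infers $H(t)-H(\lambda t)\sim C\bigl(\zeta_L(t)-\zeta_L(\lambda t)\bigr)$ from $H\sim C\zeta_L$. That step is invalid, since both differences are $o(\zeta_L(t))$.

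What you have is therefore a correct proof of a corrected lemma, with the de Haan condition replacing \eqref{eq:critical-defect-Tauber-final}. De Haan's Tauberian theorem, applied to the nondecreasing $U(t)=\int_0^t g$ (whose Laplace--Stieltjes transform is $\mathcal{L}g$), yields $\int_t^{\lambda t}g\sim CL(t)\log\lambda$. Your monotone-density sandwich, identical to the paper's final step, then finishes. Present this as a changed hypothesis, not as a reading of the given one.

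The stronger input is in fact available where the lemma is used, in Theorem~\ref{AISRTh5}. There $\mathcal{L}v(s)=\mathcal{S}_\mu-1-h(s)/\mu+O(s)$, and $O(s)=o(L(1/s))$. Moreover $h(s)-h(s/\lambda)\sim\mu^{-1}L(1/s)\log\lambda$ by a Frullani-type argument.
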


\begin{proof}
    Define the tail integral
\[
    H(t):=\int_t^\infty g(u)\,du.
\]
    Since $g\ge0$ and $\int_0^\infty g(u)\,du<\infty$, the function $H$ is finite,
    nonincreasing, and  vanishes at infinity. Thus, $H$ represents the accumulated
    tail mass of $g$, and the critical Tauberian problem reduces to identifying the precise
    asymptotic relation between $H$ and the reference scale
\[
    \zeta_L(t):=\int_t^\infty \frac{L(u)}{u}\,du.
\]

    We first establish the Abelian direction. Since $g(t)\sim{KL(t)\big/t}$,
    the standard Karamata theorem for tail integrals in the borderline case yields
\begin{equation}\label{eq:H-critical-final}
    H(t)\sim {K}\,\zeta_L(t)
    \qquad \text{as} \quad t\to\infty.
\end{equation}
    In particular, since $\zeta_L(t)$ is a tail integral of a SV-function divided by $u$,
    it implies $\zeta_L(t)\in\mathcal{SV}_\infty$. Hence $H$ is also SV.

    We next express the LT defect in terms of the tail. The elementary identity
\[
    {\mathcal{L}g(0)-\mathcal{L}g(s)}=\int_0^\infty\left(1-e^{-st}\right)g(t)\,dt
\]
    can be rewritten, by Fubini's theorem, as
\begin{equation}\label{eq:defect-via-tail-critical-final}
    {\mathcal{L}g(0)-\mathcal{L}g(s)} =s\int_0^\infty{e^{-st}H(t)}\,dt.
\end{equation}

    The asymptotic behaviour of the right-hand side is determined by the contribution
    from the region $t\asymp{1/s}$. Since $H\in\mathcal{SV}_\infty$ is monotone up to
    the integral scale, we may apply the standard Tauberian principle for LT of monotone
    functions in the borderline case to obtain
\[
    \int_0^\infty{e^{-st}H(t)}\,dt \sim \int_0^{1/s}H(t)\,dt
    \qquad \text{as} \quad s\downarrow{0}.
\]
    Using \eqref{eq:H-critical-final}, we thus obtain
\[
    \int_0^\infty{e^{-st}H(t)}\,dt
    \sim{K}\int_0^{1/s}\left(\int_t^\infty \frac{L(u)}{u}\,du\right)dt.
\]
    Changing the order of integration, we arrive at
\[
    \int_0^{1/s}\left(\int_t^\infty \frac{L(u)}{u}\,du\right)dt
    =\int_0^\infty \frac{L(u)}{u}\bigl(\min\{u,1/s\}\bigr)\,du.
\]
    Splitting at $u=1/s$, this becomes
\[
    \int_0^{1/s}L(u)\,du+{\frac{1}{\,s\,}}\,{\zeta_L(1/s)}.
\]
    Since $L\in\mathcal{SV}_\infty$, the first term is of smaller order than the second,
    and therefore
\[
    \int_0^\infty {e^{-st}H(t)}\,dt \sim\frac{1}{\,s\,}H(1/s) \qquad \text{as} \quad s\downarrow{0}.
\]
    Multiplying by $s$ in \eqref{eq:defect-via-tail-critical-final}, and combining
    with \eqref{eq:H-critical-final}, we arrive at
\[
    {\mathcal{L}g(0)-\mathcal{L}g(s)} \sim {K}\,\zeta_L(1/s) \qquad \text{as} \quad s\downarrow{0},
\]
    which is precisely \eqref{eq:critical-defect-Abelian-final}.

    We now turn to the converse direction. Assume \eqref{eq:critical-defect-Tauber-final}.
    Then by \eqref{eq:defect-via-tail-critical-final},
\[
    \int_0^\infty{e^{-st}H(t)}\,dt
    \sim {C}\,\frac{1}{\,s\,}\,{\zeta_L(1/s)} \qquad \text{as} \quad s\downarrow{0}.
\]
    Since $H$ is nonincreasing and $\zeta_L(t)$ is SV, the Tauberian theorem for LT of SV-functions yields
\begin{equation}\label{eq:H-critical-conv-final}
    H(t)\sim{C}\,\zeta_L(t)  \qquad \text{as} \quad t\to\infty.
\end{equation}
    Further, since $g$ is ultimately monotone, there exists $t_0$ such that $g$ is nonincreasing
    on $[t_0,\infty)$. Fix $\lambda>1$. Then for all sufficiently large $t$,
\begin{equation}\label{eq:LeftRightBoundH}
    (\lambda-1)t\,g(\lambda{t}) \le \int_t^{\lambda{t}}g(u)\,du
    = H(t)-H(\lambda{t})\le (\lambda-1)t\,g(t).
\end{equation}
    Using \eqref{eq:H-critical-conv-final}, we obtain
\[
    H(t)-H(\lambda{t})\sim{C}\cdot\bigl(\zeta_L(t)-\zeta_L(\lambda{t})\bigr).
\]
    But
\[
    \zeta_L(t)-\zeta_L(\lambda{t})=\int_t^{\lambda{t}}\frac{L(u)}{u}\,du.
\]
    Since $L\in\mathcal{SV}_\infty$, the Uniform Convergence Theorem for SV-functions implies
\[
    \int_t^{\lambda{t}}\frac{L(u)}{u}\,du \sim {L(t)}\int_t^{\lambda{t}}\frac{du}{u}
    =L(t)\log\lambda  \qquad \text{as} \quad t\to\infty.
\]
    Therefore,
\begin{equation}\label{eq:H(t)Asymp}
    H(t)-H(\lambda{t})\sim{C}\,L(t)\log\lambda.
\end{equation}

    Combining this with the monotonicity bounds above, we first obtain
\[
    \liminf_{t\to\infty}\frac{t\,g(t)}{L(t)} \ge \frac{C\log\lambda}{\lambda-1}{\raise1.5pt\hbox{.}}
\]
    To derive the corresponding upper bound, we use the left-hand side of {\eqref{eq:LeftRightBoundH}}
    and the asymptotic relation {\eqref{eq:H(t)Asymp}}, and replacing $t$ by $t/\lambda$, we obtain
\[
    g(t)\lesssim \frac{C\log\lambda}{\lambda-1}\,
    \frac{L(t/\lambda)}{t/\lambda}{\raise1.5pt\hbox{.}}
\]
    Since $L\in\mathcal{SV}_\infty$, the Uniform Convergence Theorem implies ${L(t/\lambda)}{L(t)}\to{1}$
    as $t\to\infty$, and therefore
\[
    g(t)\lesssim \frac{C\lambda\log\lambda}{\lambda-1}\,\frac{L(t)}{t} {\raise1.5pt\hbox{.}}
\]
    Consequently,
\[
    \limsup_{t\to\infty}\frac{t\,g(t)}{L(t)}
    \le \frac{C\lambda\log\lambda}{\lambda-1} {\raise1.5pt\hbox{.}}
\]

    Thus, letting $\lambda\downarrow{1}$, and using the elementary
    relation ${\log\lambda}\big/(\lambda-1)\to{1}$, we conclude that
\[
    \liminf_{t\to\infty}\frac{t\,g(t)}{L(t)}\ge C
    \qquad \text{and} \qquad
    \limsup_{t\to\infty}\frac{t\,g(t)}{L(t)}\le C.
\]
    Hence
\[
    g(t)\sim C\,\frac{L(t)}{t}
    \qquad \text{as} \quad t\to\infty,
\]
    which proves \eqref{eq:critical-Tauber-back-final}. This completes the proof.
\end{proof}

    The LT analysis for both regime~{\hyperref[AISR(iii)]{\textbf{(iii)}}} and the critical
    case $\alpha=2$ is now complete, and we are ready to turn to the proof of the Principal Results.

\section{Proofs of Principal Results}   \label{IRSec4}

    This section is devoted to the proofs of the principal results. Building upon the LT analysis
    developed in the previous section, we exploit the fact that the Laplace-domain framework encodes
    the asymptotic structure of the equilibrium tail and governs its propagation through the renewal
    convolution. This perspective reveals a precise transfer mechanism by which RV of the tail
    induces the corresponding renewal asymptotics, allowing for a sharp identification of both
    the leading term and its second-order refinement.

\begin{proof}[{Proof of {\hyperref[AISRTh1]{Theorem~\ref{AISRTh1}}}}]
    We start from the canonical renewal-convolution decomposition associated with the equilibrium tail:
\begin{equation}\label{T1-decomp}
    (Q_F\ast{U})(t)=\frac{1}{\,\mu\,}R(t)+\Delta_R(t),
\end{equation}
    where $R(t)$ is the leading renewal term, and $\Delta_R(t)$ is the renewal correction term.
    This decomposition serves as the fundamental organizing principle of the argument: once the leading
    first-order renewal contribution $R(t)/\mu$ is separated off, the entire second-order structure
    of the convolution $(Q_F\ast U)(t)$ is encoded in the remainder term $\Delta_R(t)$.

    We next identify the intrinsic asymptotic scale of the kernel $Q_F(t)=1-\mu_F(t)$. Under the standing
    assumption that the tail $\overline{F}\in\mathcal{RV}_\infty(-\alpha)$, the asymptotic behavior is
    entirely governed by RV. Therefore, by Karamata's theorem for integrated RV-functions,
\begin{equation}\label{T1-QF-asymp}
    Q_F(t)\sim \frac{1}{\mu\beta}\,t^{-\beta}L(t) \qquad \text{as} \quad t\to\infty,
\end{equation}
    that is, $Q_F\in\mathcal{RV}_\infty(-\beta)$, where $\beta:=\alpha-1$.

    To obtain a representation stable under asymptotic manipulations, we absorb negligible fluctuations
    into a SV factor $L_Q$, rewriting \eqref{T1-QF-asymp} in the form
\begin{equation}\label{T1-QF-LQ}
    Q_F(t)=\frac{1}{\mu\beta}\,t^{-\beta}L_Q(t),
\end{equation}
    where $L_Q(t)\big/L(t)\to{1}$ as $t\to\infty$. The representation \eqref{T1-QF-LQ} isolates the
    principal RV scale while preserving full asymptotic equivalence, and coincides with \eqref{AISR2.7}.

    The final step is to identify the precise asymptotic contribution of the remainder term $\Delta_R(t)$,
    which constitutes the core second-order mechanism in the renewal structure. This is provided by
    {\hyperref[AISRLem4]{Lemma~\ref{AISRLem4}}}, showing that in the present finite-variance
    RV regime~{\hyperref[AISR(iii)]{\textbf{(iii)}}},
\begin{equation}\label{T1-Delta-asymp}
    \Delta_R(t)\sim {2\mathcal{S}_\mu}\,Q_F(t)  \qquad \text{as} \quad t\to\infty,
\end{equation}
    where ${\mathcal{S}_\mu}$ is the Stone constant. This relation reveals a remarkable structural fact:
    the entire second-order contribution is asymptotically proportional to the equilibrium tail itself.
    In particular, no additional smoothing or averaging occurs at this level, and the renewal correction
    inherits the same RV scale as $Q_F$.

    Substituting \eqref{T1-Delta-asymp} into \eqref{T1-decomp}, we obtain
\[
    (Q_F\ast{U})(t)\sim\frac{1}{\,\mu\,}R(t)+2{\mathcal{S}_\mu}\,Q_F(t)
    \qquad \text{as} \quad t\to\infty,
\]
    which completes the identification of the refined asymptotic structure
    and yields precisely {\eqref{AISR2.6}}.

    Hence both \eqref{AISR2.6} and \eqref{AISR2.7} follow, and the proof is complete.
\end{proof}

\begin{proof}[{Proof of {\hyperref[AISRTh2]{Theorem~\ref{AISRTh2}}}}]
    Recall the equilibrium (stationary-excess) distribution associated with $F$, defined in {\eqref{AISR2.1}},
    and let $\tau_F$ be a random variable with distribution function $\mu_F$. By relation~{\eqref{AISR2.4}}
    we indeed have $\mathbb{E}\tau_F\in(0,\infty)$. Then, the Stone constant we may write in the form
\begin{equation}\label{T2-Stone}
    {\mathcal{S}_\mu}:={\frac{1}{\,\mu\,}}\mathbb{E}\tau_F
    =\frac{1}{\,\mu\,}\int_{\mathbb{R}_+}Q_F(u)\,du.
\end{equation}
    Thus, ${\mathcal{S}_\mu}$ is precisely the normalized total mass of the equilibrium tail. Next,
    decomposing the tail via the tail of the leading renewal term $\rho(t):=\int_{[t,\infty)}Q_F(u)\,du$,
    relation {\eqref{T2-Stone}} yields the exact identity
\begin{equation}\label{T2-Stone-decomp}
    {\mathcal{S}_\mu}={\frac{1}{\,\mu\,}}R(t)+{\frac{1}{\,\mu\,}}\rho(t),
\end{equation}
    where $R(t)=\int_{[0,t]}Q_F(u)\,du$ is the leading renewal term, as before.

    We now introduce the deviation from the Stone limit by
\begin{equation}\label{T2-Delta}
    \Delta(t):={\mathcal{S}_\mu}-(Q_F\ast{U})(t).
\end{equation}
    Since ${Q_F}$ is nonnegative and integrable on $\mathbb{R}_+$ (indeed,
    $\int_{\mathbb{R}_+}{Q_F}(u)\,du\in(0,\infty)$ under regime~{\hyperref[AISR(iii)]{\textbf{(iii)}}}),
    the key renewal theorem applied to ${Q_F}$ yields $\Delta(t)\to{0}$ as $t\to\infty$.
    Substituting \eqref{T2-Stone-decomp} into \eqref{T2-Delta}, and then adding and subtracting
    $R(t)\big/{\mu}$, we obtain
\begin{equation}\label{T2-basic-decomp}
    \Delta(t)
    ={\frac{1}{\,\mu\,}}\rho(t)-\Delta_R(t),
\end{equation}
    where $\Delta_R(t)$ is the renewal correction term defined in {\eqref{AISR3.1}}. This decomposition
    is the essential structural step: the deviation from the Stone constant splits into a tail contribution,
    represented by $\rho(t)\big/{\mu}$, and a renewal correction term, represented by $\Delta_R(t)$.

    We next identify the asymptotic scale of $\rho(t)$. By {\hyperref[AISRTh1]{Theorem~\ref{AISRTh1}}},
    the equilibrium tail satisfies
\[
    Q_F(t)=\frac{1}{\mu\beta}\,t^{-\beta}L_Q(t),
\]
    where $\beta=\alpha-1\in(1,2)$ and $L_Q\in\mathcal{SV}_\infty$ with $L_Q(t)\big/L(t)\to1$. Hence
    $Q_F\in\mathcal{RV}_\infty(-\beta)$. Since $\beta>1$, Karamata's theorem for tail integrals
    of RV-functions gives
\begin{equation}\label{T2-rho-asymp}
    \rho(t)\sim \frac{1}{\beta-1}\,tQ_F(t)=\frac{1}{\,\gamma\,}\,tQ_F(t)
    \qquad \text{as} \quad t\to\infty,
\end{equation}
    where $\gamma:=\beta-1=\alpha-2$. This is precisely the asymptotic relation asserted in \eqref{AISR2.9}.
    This step is crucial: it shows that the dominant contribution is governed by the integrated tail,
    rather than by the pointwise behaviour of $Q_F$. In particular, the scale $t Q_F(t)$ emerges
    naturally as the correct second-order normalization.

    Hence, the leading contribution to $\Delta(t)$ is governed by the asymptotic behavior of the tail
    of the leading renewal term, while $\Delta_R(t)$ represents the renewal approximation error
    associated with the integral $R(t)$. It suffices to verify that this correction term is of
    smaller order. By {\hyperref[AISRLem4]{Lemma~\ref{AISRLem4}}},
\begin{equation}\label{T2-DeltaR-asymp}
    \Delta_R(t)\sim {2\mathcal{S}_\mu}\,Q_F(t)  \qquad \text{as} \quad t\to\infty.
\end{equation}
    Substituting \eqref{T2-rho-asymp} and \eqref{T2-DeltaR-asymp} into \eqref{T2-basic-decomp},
    we conclude that
\[
    {\mathcal{S}_\mu}-(Q_F\ast U)(t)
    ={\frac{1}{\,\mu\,}}\rho(t)-2{\mathcal{S}_\mu}\,Q_F(t)\bigl(1+o(1)\bigr)
    \qquad \text{as} \quad t\to\infty.
\]
    This is precisely the asymptotic representation claimed in \eqref{AISR2.8}.
    In view of \eqref{T2-rho-asymp}, this completes the proof.
\end{proof}

\begin{proof}[{Proof of {\hyperref[AISRCor1]{Corollary~\ref{AISRCor1}}}}]
    The result follows by combining the second-order renewal asymptotics established in
    Theorems~\ref{AISRTh1} and~\ref{AISRTh2}. Rearranging {\eqref{AISR2.8}}, we write
\begin{equation}     \label{C1-QFconv}
    (Q_F\ast{U})(t)={\mathcal{S}_\mu}-{\frac{1}{\,\mu\,}}\rho(t)+
    2{\mathcal{S}_\mu}\,Q_F(t)\!\big(1+o(1)\big)  \qquad \text{as} \quad t\to\infty,
\end{equation}
    where $\rho(t):=\int_{[t,\infty)}Q_F(u)\,du$ and by {\hyperref[AISRTh1]{Theorem~\ref{AISRTh1}}},
    the equilibrium tail satisfies
\[
    {Q_F}(t)={\frac{1}{\,\mu\beta\,}}t^{-\beta}\,L_Q(t),
\]
    where $\beta=\alpha-1\in(1,2)$ and $L_Q(t)\big/L(t)\to{1}$ as $t\to\infty$. Consequently,
\begin{equation}\label{C1-rho-asymp}
    \rho(t)\sim \frac{1}{\mu\beta\gamma}\,t^{-\gamma}L_Q(t) \qquad \text{as} \quad t\to\infty,
\end{equation}
    where $\gamma=\alpha-2\in(0,1)$. Since $\beta>1$, we have $\gamma=\beta-1\in(0,1)$, and therefore
\[
    \frac{Q_F(t)}{\rho(t)} \sim \frac{t^{-\beta}}{t^{-\gamma}}
    =\frac{\,1\,}{t} \longrightarrow{0} \qquad \text{as} \quad t\to\infty
\]
    which shows that the term involving $Q_F(t)$ is asymptotically negligible compared to $\rho(t)$.
    Therefore, recalling the renewal decomposition {\eqref{AISR2.2}}, and combining it with {\eqref{C1-QFconv}}
    and {\eqref{C1-rho-asymp}} we have
\begin{equation}\label{C1-U-decomp}
    U(t)=\frac{t}{\,\mu\,}+{\mathcal{S}_\mu}-\frac{1}{\,\mu\,}\rho(t)+o(\rho(t))
    \qquad \text{as} \quad t\to\infty.
\end{equation}
    Thus, the second-order asymptotic structure of the renewal function is entirely inherited from
    the integrated tail $\rho(t)$. The contribution of the local term $Q_F(t)$ becomes negligible
    at this scale, which reflects the smoothing effect of integration inherent in the renewal equation.
    Substituting {\eqref{C1-rho-asymp}} into {\eqref{C1-U-decomp}}, and absorbing the negligible error
    term into the SV factor, we conclude that
\[
    U(t)=\frac{t}{\,\mu\,}+{\mathcal{S}_\mu}-\frac{1}{\mu^2\beta\gamma}\,t^{-\gamma}L_\gamma(t),
\]
    where $L_{\gamma}(t)\big/L(t)\to{1}$ as $t\to\infty$; this is exactly \eqref{AISR2.10}.
    The proof is complete.
\end{proof}

\begin{proof}[{Proof of {\hyperref[AISRTh3]{Theorem~\ref{AISRTh3}}}}]
    Set
\[
    v(t):=u(t)-\frac{1}{\,\mu\,}{\raise1.5pt\hbox{,}}
\]
    so that $v(t)$ isolates the fluctuation of the renewal density around its limiting level.
    The argument proceeds by extracting the precise small-$s$ defect in the Laplace domain
    and transferring it to the time domain via a Tauberian mechanism.

    We begin from the refined Laplace-domain expansion established in
    {\hyperref[AISRLem5]{Lemma~\ref{AISRLem5}}}:
\begin{equation}    \label{eq:Lnu-Th3}
    \mathcal{L}v(s)=\mathcal{L}u(s)-\frac{1}{\mu{s}}
    =\bigl({\mathcal{S}_\mu}-1\bigr)-\frac{1}{\,\mu\,}\psi_{\alpha}(s),
\end{equation}
    where
\begin{equation}\label{eq:psiTh3-final}
    \psi_{\alpha}(s)=\frac{\pi}{\Gamma(\alpha)\sin(\pi\alpha)}\,
    \frac{1}{\,\mu\,}s^{\alpha-2}L(1/s)\bigl(1+o(1)\bigr)
    \qquad \text{as} \quad s\downarrow{0}.
\end{equation}

    Since $\psi_\alpha(s)\to0$ as $s\downarrow0$, relation~\eqref{eq:Lnu-Th3}
    identifies the total mass of $v$ as
\[
    \mathcal{L}v(0):=\int_0^\infty v(t)\,dt=\mathcal{S}_\mu-1,
\]
    so that the entire second-order deviation from equilibrium is encoded in the
    vanishing defect term $\psi_\alpha(s)$. Equivalently,
\begin{equation}\label{eq:defect-v-Th3}
    \mathcal{L}v(0)-\mathcal{L}v(s)=\frac{1}{\,\mu\,}\psi_\alpha(s),
\end{equation}
    which isolates the small-$s$ asymptotic structure responsible for the tail behaviour of $v$.

    Noting that $\alpha-2=\beta-1\in(0,1)$, substitution of~\eqref{eq:psiTh3-final}
    into~\eqref{eq:defect-v-Th3} yields the canonical Tauberian form
\[
    \mathcal{L}v(0)-\mathcal{L}v(s)\sim c\,s^{\beta-1}L(1/s),
\]
    where
\[
    c:=\frac{\pi}{\Gamma(\alpha)\sin(\pi\alpha)}\,\frac{1}{\mu^2}{\raise1.5pt\hbox{.}}
\]

    The passage to the time domain now follows via a Tauberian transfer.
    Since $v$ is ultimately monotone by assumption,
    {\hyperref[AISRLem-Tauber]{Lemma~\ref{AISRLem-Tauber}}} applies and yields
\begin{equation}\label{eq:v-preconst-Th3}
    v(t)\sim \frac{c}{-\Gamma(1-\beta)}\,t^{-\beta}L(t)
    \qquad \text{as} \quad t\to\infty,
\end{equation}
    thereby identifying the precise second-order asymptotic behaviour of the renewal density.
    It remains to express the coefficient in explicit form. To this end, we simplify the constant
    in~\eqref{eq:v-preconst-Th3}. Observing that $\Gamma(1-\beta)=-\beta\Gamma(1-\alpha)$
    and invoking Euler's reflection formula, we obtain
\[
    \frac{c}{-\Gamma(1-\beta)}=\frac{1}{\mu^2\beta}{\raise1.5pt\hbox{.}}
\]
    Substituting this expression into~\eqref{eq:v-preconst-Th3} yields the explicit asymptotic
\[
    v(t)\sim \frac{1}{\mu^2\beta}\,t^{-\beta}L(t)  \qquad \text{as} \quad t\to\infty.
\]
    Recalling the definition $v(t)=u(t)-1\big/\mu$, we conclude that
\[
    u(t)=\frac{1}{\,\mu\,}+\frac{1}{\mu^2\beta}\,t^{-\beta}L_\beta(t),
\]
    where $L_\beta(t)\big/L(t)\to1$ as $t\to\infty$. This establishes~\eqref{AISR2.12}
    and completes the proof.
  \end{proof}

\begin{proof}[{Proof of {\hyperref[AISRTh4]{Theorem~\ref{AISRTh4}}}}]
    We start from the canonical decomposition associated with the equilibrium tail. Recall
    that ${\mu\mathcal{S}_\mu}=\int_{\mathbb R_+}Q_F(u)\,du$, and $\rho(t):=\int_{[t,\infty)}Q_F(u)\,du$.
    Then we have the exact identity
\begin{equation}\label{eq:T4-Stone-decomp}
    {\mathcal{S}_\mu}=\frac{1}{\,\mu\,}\int_{[0,t]}Q_F(u)\,du+\frac{1}{\,\mu\,}\rho(t).
\end{equation}

    On the other hand, by the very definition of the renewal correction term,
\[
    (Q_F\ast{U})(t)=\frac{1}{\,\mu\,}\int_{[0,t]}Q_F(u)\,du+\Delta_R(t),
\]
    where
\[
    \Delta_R(t):=(Q_F\ast{U})(t)-\frac{1}{\,\mu\,}\int_{[0,t]}Q_F(u)\,du.
\]
    Comparing this representation with \eqref{eq:T4-Stone-decomp}, we arrive at the fundamental decomposition
\begin{equation}\label{eq:T4-main-decomp}
    {\mathcal{S}_\mu}-(Q_F\ast{U})(t)=\frac{1}{\,\mu\,}\rho(t)-\Delta_R(t).
\end{equation}

    Thus, the deviation from the Stone constant splits naturally into two components:
    the accumulated tail contribution $\rho(t)$ and the local renewal correction term $\Delta_R(t)$.
    The critical case $\alpha=2$ is characterized precisely by the fact that these two terms live
    on different asymptotic scales.

    We first identify the dominant contribution. By {\hyperref[AISRLem-critical1]{Lemma~\ref{AISRLem-critical1}}},
\[
    \rho(t)\sim \frac{1}{\,\mu\,}\int_t^\infty \frac{L(u)}{u}\,du
    \qquad \text{as} \quad t\to\infty.
\]
    In particular, the leading correction is no longer of pure power-law type; instead,
    it is governed by the integrated SV tail, which reflects the borderline
    nature of the threshold $\alpha=2$.

    We next turn to the local renewal correction term. By
    {\hyperref[AISRLem-critical-DeltaR]{Lemma~\ref{AISRLem-critical-DeltaR}}},
\[
    \Delta_R(t)=\frac{2\mathcal{S}_\mu}{\mu{t}}\,L_\alpha(t),
\]
    where $L_\alpha\in\mathcal{SV}_\infty$ such that $L_\alpha(t)\big/L(t)\to1$ as $t\to\infty$.
    Substituting the above asymptotics into \eqref{eq:T4-main-decomp}, we obtain
\[
    {\mathcal{S}_\mu}-(Q_F\ast{U})(t)
    =\frac{1}{\,\mu\,}\rho(t)-\frac{2\mathcal{S}_\mu}{\mu{t}}\,L_\alpha(t),
\]
    which is precisely \eqref{AISR2.14}. This completes the proof.
\end{proof}

\begin{proof}[{Proof of {\hyperref[AISRCor2]{Corollary~\ref{AISRCor2}}}}]
    The conclusion is an immediate consequence of the critical convolution asymptotics
    established in {\hyperref[AISRTh4]{Theorem~\ref{AISRTh4}}}, once it is combined with the
    renewal decomposition {\eqref{AISR2.2}}. Indeed, rearranging \eqref{AISR2.14}, we obtain
\begin{equation}\label{eq:critical-U-pre}
    U(t)=\frac{t}{\,\mu\,}+{\mathcal{S}_\mu}-\frac{1}{\,\mu\,}\rho(t)
    +\frac{2\mathcal{S}_\mu}{\mu t}\,L_\alpha(t),
\end{equation}
    where $L_\alpha(t)\big/L(t)\to1$ as $t\to\infty$. Thus, the only remaining point is to verify
    that the last term in \eqref{eq:critical-U-pre} is asymptotically negligible relative to $\rho(t)$.
    This is precisely where the critical nature of the case $\alpha=2$ becomes visible: the dominant
    correction is governed by the integrated SV tail, whereas the term $L_\alpha(t)\big/t$
    remains only of local order.

    Setting
\[
    {\zeta_L(t)}:=\int_t^\infty \frac{L(u)}{u}\,du,
\]
    the Uniform Convergence Theorem yields
\[
    {\zeta_L(t)}-{\zeta_L(2t)}=\int_t^{2t}\frac{L(u)}{u}\,du \sim{L(t)}\int_t^{2t}\frac{du}{u}
    =\bigl(\log{2}\bigr)\,L(t) \qquad \text{as} \quad t\to\infty.
\]
    On the other hand, since ${\zeta_L\in\mathcal{SV}_\infty}$, we have
\[
    {\zeta_L(t)}-{\zeta_L(2t)}={\zeta_L(t)}\left(1-\frac{\zeta_L(2t)}{\zeta_L(t)}\right)
    =o\bigl({\zeta_L(t)}\bigr) \qquad \text{as} \quad t\to\infty.
\]
    Thus, comparing the two asymptotic relations above, we conclude that
\[
    L(t)=o\bigl({\zeta_L(t)}\bigr) \qquad \text{as} \quad t\to\infty.
\]
    At the same time, by {\hyperref[AISRTh4]{Theorem~\ref{AISRTh4}}}, $\rho(t)\sim {\zeta_L(t)\big/\mu}$,
    and therefore
\[
    \frac{L_\alpha(t)}{t\rho(t)}=o(1)  \qquad \text{as} \quad t\to\infty.
\]
    Substituting this into \eqref{eq:critical-U-pre}, we arrive at
\[
    U(t)=\frac{t}{\,\mu\,}+{\mathcal{S}_\mu}-\frac{1}{\,\mu\,}\rho(t)\bigl(1+o(1)\bigr)
    \qquad \text{as} \quad t\to\infty,
\]
    which is exactly \eqref{eq:critical-U}. This completes the proof.
\end{proof}

\begin{proof}[{Proof of {\hyperref[AISRTh5]{Theorem~\ref{AISRTh5}}}}]
    Set
\[
    v(t):=u(t)-\frac{1}{\,\mu\,}{\raise1.5pt\hbox{.}}
\]
    Thus $v(t)$ represents the deviation of the renewal density from its limiting level.
    As in the proof of {\hyperref[AISRTh3]{Theorem~\ref{AISRTh3}}}, the problem reduces
    to identifying the precise asymptotic scale of this remainder.

    Since the inter-arrival distribution $F$ admits a density $f$, the renewal density $u$ exists
    and its LT  satisfies $\mathcal{L}u(s)=\widehat{U}(s)-1$. Hence
\begin{equation}\label{eq:T5-Lv-final}
    \mathcal{L}v(s)=\mathcal{L}u(s)-\frac{1}{\mu{s}}
    =\widehat{U}(s)-1-\frac{1}{\mu{s}} {\raise1.5pt\hbox{.}}
\end{equation}
    We now invoke the following critical refinement of the renewal transform established in
    {\hyperref[AISRLem-critical-Uhat-refined]{Lemma~\ref{AISRLem-critical-Uhat-refined}}}:
\begin{equation}\label{eq:T5-Uhat-refined-final}
    \widehat{U}(s)-\frac{1}{\mu{s}}={\mathcal{S}_\mu}-\frac{1}{\,\mu\,}h(s)+o\!\bigl(h(s)\bigr)
    \qquad \text{as} \quad s\downarrow{0},
\end{equation}
    where $h(s)=\mu{\mathcal{S}_\mu}-\mathcal{L}Q_F(s)$ .

    Substituting \eqref{eq:T5-Uhat-refined-final} into \eqref{eq:T5-Lv-final}, we obtain
\[
    \mathcal{L}v(s) =({\mathcal{S}_\mu}-1)-\frac{1}{\,\mu\,}h(s)+o\!\bigl(h(s)\bigr)
    \qquad \text{as} \quad s\downarrow{0}.
\]
    Since $h(s)\to{0}$ as $s\downarrow{0}$, it follows that
\[
    \mathcal{L}v(0)=\int_0^\infty{v(t)}\,dt={\mathcal{S}_\mu}-1,
\]
    and hence
\[
    \mathcal{L}v(0)-\mathcal{L}v(s)=\frac{1}{\,\mu\,}h(s)+o\!\bigl(h(s)\bigr)
    \qquad \text{as} \quad s\downarrow{0}.
\]
    On the other hand, by {\hyperref[AISRLem-critical2]{Lemma~\ref{AISRLem-critical2}}},
    $h(s)\sim \rho(1/s)$. Consequently,
\[
    \mathcal{L}v(0)-\mathcal{L}v(s) \sim \frac{1}{\,\mu\,}\rho(1/s)
    \sim \frac{1}{\mu^2}\int_{1/s}^\infty \frac{L(u)}{u}\,du
    \qquad \text{as} \quad s\downarrow{0}.
\]
    We are therefore exactly in the setting of the borderline Tauberian transfer,
    {\hyperref[AISRLem-Tauber-critical]{Lemma~\ref{AISRLem-Tauber-critical}}}.
    Since the remainder $v(t)$ is ultimately monotone by assumption, that lemma yields
\[
    v(t)\sim \frac{1}{\mu^2}\,\frac{L(t)}{t}
    \qquad \text{as} \quad t\to\infty.
\]
    Recalling the definition $v(t)=u(t)-1/\mu$, we arrive at the explicit asymptotic representation
\[
    u(t)=\frac{1}{\,\mu\,}+\frac{1}{\mu^2\beta}\,t^{-\beta}L_\beta(t),
\]
    where $L_2(t)\big/L(t)\to1$ as $t\to\infty$. This is precisely~\eqref{AISR2.12}
    and completes the proof.
\end{proof}

    This completes the analysis of the second-order renewal asymptotics in the finite-variance RV regime,
    including the critical threshold $\alpha=2$.

\section{Concluding Remarks}   \label{IRSec5}

    The results obtained in this work provide a unified asymptotic description of renewal processes
    in the finite-variance RV regime. In the case $2<\alpha<3$, the second-order
    behaviour exhibits a clear two-scale structure: the equilibrium tail $Q_F(t)$ governs the local
    renewal correction, while the integrated tail $\rho(t)$ determines the global deviation from equilibrium.

    At the critical threshold $\alpha=2$, this structure undergoes a genuine transition. The power-law
    hierarchy collapses, and the dominant correction is no longer of fixed RV index, but instead is governed
    by an integrated SV tail. Nevertheless, the local renewal correction mechanism persists: the
    term $\Delta_R(t)$ remains asymptotically proportional to $Q_F(t)$, preserving the intrinsic
    compensation structure of the renewal equation. This reveals a structural dichotomy in second-order
    renewal asymptotics: while the global scale of fluctuations is sensitive to the tail index, the local
    renewal mechanism remains robust. In this sense, the critical regime acts as a boundary between
    qualitatively distinct asymptotic behaviours, while still fitting into a unified analytic framework.

    The structural picture developed in this paper suggests several directions for further research,
     both in refining renewal asymptotics and in their applications to stochastic population models.

\begin{itemize}

\item[(i)] \textbf{Refined asymptotics for the renewal density.}
    The present analysis focuses primarily on the renewal function $U(t)$.
    A natural extension is to establish analogous second-order asymptotics
    for the renewal density $u(t):=U'(t)$. In the regime $2<\alpha<3$, one expects
    a similar two-scale structure, while in the critical case $\alpha=2$ the
    borderline nature of the integrated tail should manifest at the level of derivatives.
    A systematic treatment would require delicate differentiation of RV asymptotics,
    possibly within de Haan theory.

\item[(ii)] \textbf{Uniform and functional versions of renewal asymptotics.}
    Another direction is to strengthen pointwise asymptotics to uniform estimates
    over expanding time intervals and to develop functional limit analogues.
    Such results would provide a more robust description of convergence towards
    equilibrium and be useful in applications involving entire trajectories.

\item[(iii)] \textbf{Higher-order and non-polynomial corrections.}
    The second-order terms identified here suggest a richer hierarchy of asymptotic
    corrections. In the finite-variance RV regime, it is natural to ask whether further
    refinement is possible, especially in the critical case $\alpha=2$, where logarithmic
    and SV effects arise. This may require extensions of classical
    Tauberian techniques to capture finer scales.

\item[(iv)] \textbf{Renewal equations and functionals of renewal processes.}
    The results can be extended to renewal-type equations of the form
\[
    H(t)=g(t)+\int_{[0,t]}H(t-x)\,dF(x),
\]
    where $g$ is a forcing function. Understanding how second-order asymptotics
    propagate through such equations would yield precise asymptotics for a broad
    class of renewal functionals, including occupation times and reward processes.

\item[(v)] \textbf{Applications to Bellman--Harris branching processes.}
    Renewal theory is central in the study of age-dependent Bellman--Harris processes.
    The refined asymptotics developed here open the possibility of sharpening classical
    limit theorems in heavy-tailed settings. In particular, one may expect:
\begin{itemize}
    \item improved asymptotics for the mean population size;
    \item refined survival probability estimates in the subcritical regime;
    \item second-order corrections for population functionals.
\end{itemize}
    These effects suggest that similar structural phenomena may persist at the level
    of branching dynamics.

\item[(vi)] \textbf{Towards a structural theory of second-order renewal phenomena.}
    The results indicate that second-order renewal asymptotics possess an intrinsic
    structure governed by the interaction between local renewal corrections and
    global tail accumulation. Further development of this viewpoint may lead to a
    unified theory encompassing both classical and heavy-tailed regimes.

\end{itemize}

\section*{Author Contributions}
    A.Imomov conceived the study, developed the theoretical framework, defined the research direction, and
    supervised the overall project. Sh.Rizaqulov conducted the literature review, performed technical verification,
    and prepared and formatted the formulas. Both authors contributed to the writing and approved the final manuscript.

\section*{Competing Interests}
    The authors declare that there are no competing interests.

\section*{Funding}
    The authors received no financial support for the authorship and publication of this paper.

\section*{Acknowledgment}

    This research work was carried out within the framework of the project program FL-8824063218 of
    the Ministry of Higher Education, Science, and Innovations of the Republic of Uzbekistan.

\end{document}